\title[Realizing Compatible Pairs of Transfer Systems]{Realizing Compatible Pairs of Transfer Systems by Combinatorial $N_\infty$-Operads}
\author[Chan]{David Chan}
\address[Chan]{Michigan State University}
\email{chandav2@msu.edu}
\author[Cho]{Myungsin Cho}
\address[Cho]{Columbia University}
\email{mc5942@columbia.edu}
\author[Mehrle]{David Mehrle}
\address[Mehrle]{University of Kentucky}
\email{davidm@uky.edu}
\author[Ocal]{Pablo S. Ocal}
\address[Ocal]{Okinawa Institute of Science and Technology}
\email{pablo.ocal@oist.jp}
\author[Osorno]{Ang\'elica M. Osorno}
\address[Osorno]{Reed College}
\email{aosorno@reed.edu}
\author[Szczesny]{Ben Szczesny}
\address[Szczesny]{Ohio University}
\email{ben.szczesny@ohio.edu}
\author[Verdugo]{Paula Verdugo}
\address[Verdugo]{Max Planck Institute for Mathematics}
\email{verdugo@mpim-bonn.mpg.de}
    \setlist[enumerate,1]{label = (\alph*)}
\numberwithin{equation}{section} 
\theoremstyle{plain}
\newtheorem{theorem}[equation]{Theorem}
\newtheorem{corollary}[equation]{Corollary}
\newtheorem{proposition}[equation]{Proposition}
\newtheorem{lemma}[equation]{Lemma}
\newtheorem{introtheorem}{Theorem}
\newtheorem{conjecture}[equation]{Conjecture}
\theoremstyle{definition}
\newtheorem{definition}[equation]{Definition}
\newtheorem{example}[equation]{Example}
\newtheorem{remark}[equation]{Remark}
\newtheorem{notation}[equation]{Notation}
\newtheorem{property}{Property}
\crefname{lemma}{Lemma}{Lemmas}
\crefname{theorem}{Theorem}{Theorems}
\crefname{definition}{Definition}{Definitions}
\crefname{proposition}{Proposition}{Propositions}
\crefname{remark}{Remark}{Remarks}
\crefname{corollary}{Corollary}{Corollaries}
\crefname{equation}{}{} 
\crefname{construction}{Construction}{Constructions}
\crefname{example}{Example}{Examples}
\crefname{subsection}{Subsection}{Subsections}
\crefname{notation}{Notation}{Notations}
\crefname{introtheorem}{Theorem}{Theorems}
\crefname{conjecture}{Conjecture}{Conjectures}
\crefname{property}{Property}{Properties}
\newtheorem*{definition*}{Definition}
\renewcommand{\phi}{\varphi}
\newcommand{\RR}{\mathbb{R}}
\newcommand{\cO}{\mathcal{O}}
\newcommand{\cP}{\mathcal{P}}
\newcommand{\cQ}{\mathcal{Q}}
\newcommand{\cK}{\mathcal{K}}
\newcommand{\cL}{\mathcal{L}}
\newcommand{\cI}{\mathcal{I}}
\newcommand{\cT}{\mathcal{T}}
\newcommand{\Top}{\mathrm{Top}}
\newcommand{\Cat}{\mathrm{Cat}}
\newcommand{\cat}[1]{\mathcal{#1}}
\newcommand{\Fin}{\mathrm{Fin}}
\newcommand{\id}{\operatorname{id}}
\newcommand{\im}{\operatorname{im}}
\newcommand{\Map}{\operatorname{Map}}
\DeclareMathOperator*{\colim}{colim}
\DeclareMathOperator{\Hull}{Hull}
\newcommand{\Res}{\operatorname{Res}}
\newcommand{\Set}{\mathrm{Set}}
\newcommand{\Ind}{\mathrm{Ind}}
\newcommand{\Coind}{\mathrm{Coind}}
\DeclarePairedDelimiterX\set[1]\lbrace\rbrace{#1} 
\newcommand{\sthreescale}{0.5}
\newcommand{\sthreetriv}{
	\begin{tikzpicture}[scale=\sthreescale,baseline=(c2b).base]
		\clip (-1.75,-0.5) rectangle (1.75,2.5);
	
        \node[fill=white,scale=\sthreescale] (e) at (0,0) {$1$};
        \node[fill=white,scale=\sthreescale] (c2a) at (-1.5,1) {$H_1$};
        \node[fill=white,scale=\sthreescale] (c2b) at (0,1) {$H_2$};
        \node[fill=white,scale=\sthreescale] (c2c) at (1.5,1) {$H_3$};    
        \node[fill=white,scale=\sthreescale] (c3) at (0.7,1.2)
            {$C_3$};
        \node[fill=white,scale=\sthreescale] (s3) at (0,2)
            {$\Sigma_3$};
	\end{tikzpicture}
}
\newcommand{\sthreea}{
	\begin{tikzpicture}[scale=\sthreescale,baseline=(c2b).base]
		\clip (-1.75,-0.5) rectangle (1.75,2.5);
	
        \node[fill=white,scale=\sthreescale] (e) at (0,0) {$1$};
        \node[fill=white,scale=\sthreescale] (c2a) at (-1.5,1) {$H_1$};
        \node[fill=white,scale=\sthreescale] (c2b) at (0,1) {$H_2$};
        \node[fill=white,scale=\sthreescale] (c2c) at (1.5,1) {$H_3$};    
        \node[fill=white,scale=\sthreescale] (c3) at (0.7,1.2)
            {$C_3$};
        \node[fill=white,scale=\sthreescale] (s3) at (0,2)
            {$\Sigma_3$};

        \draw (e) -- (c2a);
        \draw (e) -- (c2b);
        \draw (e) -- (c2c);
	\end{tikzpicture}
}
\newcommand{\sthreeb}{
	\begin{tikzpicture}[scale=\sthreescale,baseline=(c2b).base]
		\clip (-1.75,-0.5) rectangle (1.75,2.5);
	
        \node[fill=white,scale=\sthreescale] (e) at (0,0) {$1$};
         \node[fill=white,scale=\sthreescale] (c2a) at (-1.5,1) {$H_1$};
        \node[fill=white,scale=\sthreescale] (c2b) at (0,1) {$H_2$};
        \node[fill=white,scale=\sthreescale] (c2c) at (1.5,1) {$H_3$};   
        \node[fill=white,scale=\sthreescale] (c3) at (0.7,1.2)
            {$C_3$};
        \node[fill=white,scale=\sthreescale] (s3) at (0,2)
            {$\Sigma_3$};

        \draw (e) -- (c3);
	\end{tikzpicture}
}
\newcommand{\sthreeab}{
	\begin{tikzpicture}[scale=\sthreescale,baseline=(c2b).base]
		\clip (-1.75,-0.5) rectangle (1.75,2.5);
	
        \node[fill=white,scale=\sthreescale] (e) at (0,0) {$1$};
         \node[fill=white,scale=\sthreescale] (c2a) at (-1.5,1) {$H_1$};
        \node[fill=white,scale=\sthreescale] (c2b) at (0,1) {$H_2$};
        \node[fill=white,scale=\sthreescale] (c2c) at (1.5,1) {$H_3$};  
        \node[fill=white,scale=\sthreescale] (c3) at (0.7,1.2)
            {$C_3$};
        \node[fill=white,scale=\sthreescale] (s3) at (0,2)
            {$\Sigma_3$};

        \draw (e) -- (c2a);
        \draw (e) -- (c2b);
        \draw (e) -- (c2c);
        \draw (e) -- (c3);
	\end{tikzpicture}
}
\newcommand{\sthreec}{
	\begin{tikzpicture}[scale=\sthreescale,baseline=(c2b).base]
		\clip (-1.75,-0.5) rectangle (1.75,2.5);
	
        \node[fill=white,scale=\sthreescale] (e) at (0,0) {$1$};
        \node[fill=white,scale=\sthreescale] (c2a) at (-1.5,1) {$H_1$};
        \node[fill=white,scale=\sthreescale] (c2b) at (0,1) {$H_2$};
        \node[fill=white,scale=\sthreescale] (c2c) at (1.5,1) {$H_3$};   
        \node[fill=white,scale=\sthreescale] (c3) at (0.7,1.2)
            {$C_3$};
        \node[fill=white,scale=\sthreescale] (s3) at (0,2)
            {$\Sigma_3$};

        \draw (e) -- (c2a);
        \draw (e) -- (c2b);
        \draw (e) -- (c2c);
        \draw (c3) -- (s3);
	\end{tikzpicture}
}
\newcommand{\sthreeabc}{
	\begin{tikzpicture}[scale=\sthreescale,baseline=(c2b).base]
		\clip (-1.75,-0.5) rectangle (1.75,2.5);
	
        \node[fill=white,scale=\sthreescale] (e) at (0,0) {$1$};
         \node[fill=white,scale=\sthreescale] (c2a) at (-1.5,1) {$H_1$};
        \node[fill=white,scale=\sthreescale] (c2b) at (0,1) {$H_2$};
        \node[fill=white,scale=\sthreescale] (c2c) at (1.5,1) {$H_3$};  
        \node[fill=white,scale=\sthreescale] (c3) at (0.7,1.2)
            {$C_3$};
        \node[fill=white,scale=\sthreescale] (s3) at (0,2)
            {$\Sigma_3$};

        \draw (e) -- (c2a);
        \draw (e) -- (c2b);
        \draw (e) -- (c2c);
        \draw (e) -- (c3);
        \draw (e) edge[bend left=30] (s3);
        \draw (c3) -- (s3);
	\end{tikzpicture}
}
\newcommand{\sthreed}{
	\begin{tikzpicture}[scale=\sthreescale,baseline=(c2b).base]
		\clip (-1.75,-0.5) rectangle (1.75,2.5);
	
        \node[fill=white,scale=\sthreescale] (e) at (0,0) {$1$};
        \node[fill=white,scale=\sthreescale] (c2a) at (-1.5,1) {$H_1$};
        \node[fill=white,scale=\sthreescale] (c2b) at (0,1) {$H_2$};
        \node[fill=white,scale=\sthreescale] (c2c) at (1.5,1) {$H_3$};   
        \node[fill=white,scale=\sthreescale] (c3) at (0.7,1.2)
            {$C_3$};
        \node[fill=white,scale=\sthreescale] (s3) at (0,2)
            {$\Sigma_3$};

        \draw (e) -- (c2a);
        \draw (e) -- (c2b);
        \draw (e) -- (c2c);
        \draw (e) -- (c3);
        \draw (e) edge[bend left=30] (s3);
	\end{tikzpicture}
}
\newcommand{\sthreee}{
	\begin{tikzpicture}[scale=\sthreescale,baseline=(c2b).base]
		\clip (-1.75,-0.5) rectangle (1.75,2.5);
	
        \node[fill=white,scale=\sthreescale] (e) at (0,0) {$1$};
        \node[fill=white,scale=\sthreescale] (c2a) at (-1.5,1) {$H_1$};
        \node[fill=white,scale=\sthreescale] (c2b) at (0,1) {$H_2$};
        \node[fill=white,scale=\sthreescale] (c2c) at (1.5,1) {$H_3$};    
        \node[fill=white,scale=\sthreescale] (c3) at (0.7,1.2)
            {$C_3$};
        \node[fill=white,scale=\sthreescale] (s3) at (0,2)
            {$\Sigma_3$};

        \draw (e) -- (c2a);
        \draw (e) -- (c2b);
        \draw (e) -- (c2c);
        \draw (e) -- (c3);
        \draw (e) edge[bend left=30] (s3);
        \draw (c2a) -- (s3);
        \draw (c2b) -- (s3);
        \draw (c2c) -- (s3);
	\end{tikzpicture}
}
\newcommand{\sthreeall}{
	\begin{tikzpicture}[scale=\sthreescale,baseline=(c2b).base]
		\clip (-1.75,-0.5) rectangle (1.75,2.5);
	
        \node[fill=white,scale=\sthreescale] (e) at (0,0) {$1$};
         \node[fill=white,scale=\sthreescale] (c2a) at (-1.5,1) {$H_1$};
        \node[fill=white,scale=\sthreescale] (c2b) at (0,1) {$H_2$};
        \node[fill=white,scale=\sthreescale] (c2c) at (1.5,1) {$H_3$};   
        \node[fill=white,scale=\sthreescale] (c3) at (0.7,1.2)
            {$C_3$};
        \node[fill=white,scale=\sthreescale] (s3) at (0,2)
            {$\Sigma_3$};

        \draw (e) -- (c2a);
        \draw (e) -- (c2b);
        \draw (e) -- (c2c);
        \draw (e) -- (c3);
        \draw (e) edge[bend left=30] (s3);
        \draw (c2a) -- (s3);
        \draw (c2b) -- (s3);
        \draw (c2c) -- (s3);
        \draw (c3) -- (s3);
	\end{tikzpicture}
}
\date{}
\keywords{$N_{\infty}$-operad, transfer system, indexing system, intersection monoid, pairings of operads}
\subjclass[2020]{55P91, 55U35, 18M60, 06A07}
\begin{document}

\begin{abstract} 
    We investigate how the notions of pairings of operads of May and compatible pairs of indexing systems of Blumberg--Hill relate via the correspondence between indexing systems and $N_{\infty}$-operads. We show that a pairing of operads induces a pairing on the associated indexing systems. Conversely, we show that in many cases, compatible pairs of indexing systems can be realized by a pairing of $N_{\infty}$-operads. 
\end{abstract}

\maketitle

\setcounter{tocdepth}{1}
\tableofcontents

\section{Introduction}\label{sec:introduction}

Algebraic topology is commonly understood to be the study of topological objects by means of their algebraic data, suggesting a dichotomy with topology on one side and algebra on the other. In practice, however, the distinction is not so stark. Endowing the topological objects themselves with algebraic structure is nowadays common practice in algebraic topology. 
A convenient framework for working with such structures is that of \textit{operads}. 
Operads parameterize operations on a topological space and 
encode the homotopies describing properties like associativity or commutativity. 
First formalized by May \cite{May:Geometry}, operads have found many applications 
in homotopy theory and beyond \cite{Keller,Fresse1,Fresse2,MarklShniderStasheff}. 

The most structured kind of operad, known as an \textit{$E_\infty$-operad}, encodes operations which are both associative and commutative up to all higher homotopies. While different $E_\infty$-operads are suitable for different contexts, they all are weakly homotopy equivalent, as their constituent spaces are contractible.
However, in the presence of a group action, the notion of an $E_\infty$-operad breaks down 
insofar as not all equivariant operads that are non-equivariantly $E_\infty$ are equivalent. 
This observation led Blumberg and Hill to the definition of $N_\infty$ operads and their systematic study \cite{BlumbergHillOperadic}. Surprisingly, homotopy types of 
$N_\infty$-operads are classified by purely combinatorial objects called 
\textit{transfer systems} (or an equivalent incarnation called \textit{indexing systems}) 
that depend only on the group of equivariance~\cite{BlumbergHillOperadic,BalchinBarnesRoitzheim,RubinDetecting}. 
This connection between equivariant homotopy theory and combinatorics reduces a delicate topological question about equivariant operads to a tractable, combinatorial problem. 

\medskip

Often, a space has two separate yet compatible families of operations described by two operads $\cP$ and $\cQ$. This is roughly analogous to the structure of a ring: an addition and a multiplication compatible via a distributive law.
A \textit{pairing of operads} or \textit{operad pairing} (\cite{MayEringBook}, \cref{def:operad_pairing}) 
describes an operadic distributive law for $\cP$ over $\cQ$ via a sequence of structure maps relating the different levels of the operad. 

In this paper, we study the relationship between transfer systems and pairings of $N_\infty$-operads. 
Recent work of Blumberg--Hill~\cite{BlumbergHillBiincomplete} and the first author~\cite{ChanBiincomplete} 
considers the kinds of distributive laws which are sensible to consider on the level of transfer systems 
(or on the equivalent indexing systems, see \cref{def: indexing system pair}). Further work has considered the combinatorics of compatible pairs of transfer systems in the specific cases of groups of the form $C_{p^n}$ \cite{HillMengLi} and $C_{p^nq^m}$ \cite{MORSVZ}. However, the connection between this purely combinatorial definition and operads has only been considered in limited cases. Our first main theorem establishes that a pairing of $N_{\infty}$-operads yields a distributive law on the associated transfer systems. 

\begin{introtheorem}[\cref{prop:operad-action-induces-compatible-pair}]
	\label{introthm: main 1}
	Let $\cP$ and $\cQ$ be $N_\infty$-operads with transfer systems $\cT_\cP$ and $\cT_\cQ$, respectively. If there is a pairing of operads between $\cP$ and $\cQ$, then $(\cT_\cP,\cT_\cQ)$ is a compatible pair of transfer systems.
\end{introtheorem}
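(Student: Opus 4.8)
The plan is to translate the combinatorial compatibility condition of \cref{def: indexing system pair} into non-emptiness statements about fixed-point spaces, and then to produce the required fixed points by feeding witnessing points through the structure maps of the operad pairing while tracking equivariance. Recall the dictionary between $N_\infty$-operads and transfer systems \cite{BlumbergHillOperadic, RubinDetecting}: for an $N_\infty$-operad $\cO$ and subgroups $K \leq H \leq G$, the relation $K \to H$ lies in $\cT_\cO$ precisely when $\cO$ admits the $H$-set $H/K$, equivalently when the fixed-point space $\cO(n)^{\Gamma}$ is non-empty, where $n = [H:K]$ and $\Gamma \leq G \times \Sigma_n$ is the graph subgroup of the homomorphism $H \to \Sigma_n$ classifying the action of $H$ on $H/K$. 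Since $\cP$ and $\cQ$ are $N_\infty$, the collections $\cT_\cP$ and $\cT_\cQ$ are already transfer systems, so the substance of the statement lies in the compatibility condition relating the two systems, which requires that a multiplicative relation of one together with additive relations over its fibers force the corresponding distributed relations in the other.

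Concretely, suppose $J \leq K \leq H \leq G$ with $K \to H$ a multiplicative relation (witnessed by some $p \in \cP(n)^{\Gamma}$, $n = [H:K]$) and $J \to K$ an additive relation (witnessed over $K$ by some $q \in \cQ(m)^{\Gamma'}$, $m = [K:J]$). An operad pairing supplies $G$-equivariant structure maps
$$
\lambda_{j; k_1, \dots, k_j} \colon \cP(j) \times \cQ(k_1) \times \cdots \times \cQ(k_j) \longrightarrow \cQ(k_1 \cdots k_j)
$$
which are also equivariant for the symmetric groups permuting the $j$ inputs and the $\prod_i k_i$ outputs (\cref{def:operad_pairing}). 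I would specialize to $j = n$ and $k_1 = \cdots = k_n = m$, place the witness $p$ in the $\cP(n)$-slot, and fill the $n$ factors $\cQ(m)$ with the $H$-translates of $q$ across the cosets $H/K$ --- this is consistent because the stabilizer of the base coset is $K$ and $q$ is fixed by the graph subgroup $\Gamma'$. The output $\lambda(p; q, \dots, q)$ then lives in $\cQ(m^n)$, and the key numerical coincidence is that $m^n = |\Map_K(H, K/J)|$ is exactly the cardinality of the exponential $H$-set $E = \Map_K(H, K/J)$ that indexes the distributivity formula.

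The heart of the argument is then to check, using the mixed equivariance of $\lambda$, that $\lambda(p; q, \dots, q)$ is fixed by the graph subgroup of the action of $H$ on $E$: the diagonal $G$-action threaded through the $\cP(n)$-factor (which permutes the $n$ slots via $H/K$) and through the $\cQ(m)$-factors (which act via $K/J$ on each slot) should match precisely the $H$-action on $\Map_K(H, K/J)$. Decomposing $E \cong \coprod_{[\phi] \in E/H} H/\Stab_H(\phi)$ into orbits, a point fixed by this graph subgroup exhibits each summand as admitted by $\cQ$, hence $\Stab_H(\phi) \to H$ lies in $\cT_\cQ$ for every orbit --- which is exactly the family of distributed relations demanded by \cref{def: indexing system pair}. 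I expect the main obstacle to be precisely this equivariance bookkeeping: one must verify that the identification of the output index set $\{1, \dots, m\}^{\{1, \dots, n\}}$ with $\Map_K(H, K/J)$ carries the induced $\Sigma_{m^n}$-action coming from the pairing axioms to the genuine $H$-action on the exponential set, so that the relevant graph subgroups coincide. No homotopical input beyond non-emptiness of the fixed-point spaces is needed; the contractibility built into the $N_\infty$ condition plays no further role once the required fixed points have been produced.
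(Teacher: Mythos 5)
Your proposal is correct and takes essentially the same route as the paper's proof of \cref{prop:operad-action-induces-compatible-pair}: you choose fixed-point witnesses in $\cP(n)^{\Gamma(H/K)}$ and $\cQ(m)^{\Gamma(X)}$, feed the coset translates of the additive witness into the pairing map alongside the multiplicative one, and verify that the output is fixed by the graph subgroup of the $H$-action on the coinduced set, which is exactly the paper's construction $z = \lambda(a; b_1, \dots, b_n)$ with $b_i = h_i \cdot b$. The ``equivariance bookkeeping'' you flag as the main obstacle is precisely what the paper carries out via the homomorphism $\theta$ and \cref{lemma: explicit action for coinduced sets}, and your specialization to orbits $X = K/J$ is inessential since the identical argument applies to an arbitrary admissible $K$-set.
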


This theorem provides an explicit homotopical obstruction to the existence of pairings between $N_{\infty}$-operads.  In particular if $\cP$ and $\cQ$ are two $N_{\infty}$-operads with incompatible transfer systems then the theorem implies that there does not exist a pairing of $\cP$ and $\cQ$.  Considering the case where $\cP=\cQ$, it follows that an operad can admit a self-pairing only if the associated transfer system $\cT_{\cP}$ is compatible with itself.  Such a transfer system is called \emph{saturated} (\cref{remark saturated transfer systems}), and this class has seen significant interest \cite{RubinDetecting,HMOO,macbrough,Bannwart,BS2025} because of their relation to \emph{linear isometry operads}.  See \cref{def: linear isometry operad} for the definition of the linear isometry operads. As most transfer systems for a given group are not saturated, it follows that most $N_{\infty}$-operads cannot admit self-pairings.

To discuss a converse to \cref{introthm: main 1}, we introduce the following terminology: 

\begin{definition}
    A compatible pair of transfer systems $(\cT_1, \cT_2)$ is \textit{realizable} if there exists a pairing of $N_\infty$-operads between $\cO_1$ and $\cO_2$ whose associated transfer systems are $\cT_1$ and $\cT_2$, respectively. 
\end{definition}

The seemingly simpler question of whether or not every transfer system is realized by an $N_{\infty}$-operad (absent the pairing conditions) is non-trivial. This was a conjecture of Blumberg--Hill \cite[paragraph after Theorem 1.2]{BlumbergHillOperadic} which eventually saw several different solutions \cite{BonventrePereira,GutierrezWhite,RubinCombinatorial}. 

Since every transfer system is realized by an $N_\infty$-operad, one might be tempted to think that realizing a compatible pair of transfer systems is simply a matter of checking whether or not the associated operads comprise a pairing. Unfortunately, they rarely do. 
A major difficulty is the fact that pairing of operads is not a homotopy invariant notion. 
For example, although all (non-equivariant) $E_\infty$-operads are homotopy equivalent, the construction of a pairing relies significantly on the specific choices of operads; there is a canonical pairing of the linear isometries and Steiner operads \cite{Steiner}, but it is an open question whether or not the Steiner operad admits a pairing with itself. Despite this obstruction, we make the following conjecture.

\begin{conjecture}\label{conjecture: realizability}
    For every finite group $G$, every compatible pair of $G$-transfer systems is realizable.
\end{conjecture}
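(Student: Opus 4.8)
The plan is to prove the conjecture constructively: for a compatible pair $(\cT_1,\cT_2)$ I would build explicit combinatorial $N_\infty$-operads $\cO_1$ and $\cO_2$ realizing $\cT_1$ and $\cT_2$ and equip them with a pairing in the sense of \cref{def:operad_pairing}. By \cref{introthm: main 1} the compatibility is necessary, so all of the content lies in producing the structure maps. The essential difficulty, flagged above, is that a pairing is not homotopy invariant: every transfer system is realized by some operad, but the two models must be chosen to carry the prescribed homotopy type \emph{and} admit a strict pairing at once, and the realizing operads supplied by \cite{BonventrePereira,GutierrezWhite,RubinCombinatorial} need not do the latter.

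First, I would fix rigid models. Rather than using the existence theorems as a black box, the idea is to take combinatorial $N_\infty$-operads built directly from the indexing-system data, so that the components of $\cO_i(n)$ are indexed by the admissible $n$-point sets recorded by $\cT_i$ and the fixed points are contractible exactly on the graph subgroups prescribed by $\cT_i$. Such models are rigid enough that a map out of them is pinned down by finite combinatorial data while still having the correct homotopy type; this is precisely the advantage of the combinatorial operads of the title over a topological model such as the Steiner operad \cite{Steiner}.

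Second, I would read the compatibility of $(\cT_1,\cT_2)$ as exactly the input a pairing needs. Writing $\cO_1$ for the distributing (multiplicative) factor and $\cO_2$ for the additive factor, the structure maps of \cref{def:operad_pairing} have the form
\[
    \cO_1(k)\times\cO_2(j_1)\times\cdots\times\cO_2(j_k)\longrightarrow\cO_2(j_1\cdots j_k),
\]
and passing to fixed points they demand that whenever a $k$-point set is $\cT_1$-admissible and the $j_i$-point sets are $\cT_2$-admissible, the induced product set is $\cT_2$-admissible. This implication is the compatibility condition of \cref{def: indexing system pair}, so it should supply the map on each relevant fixed-point set; by contractibility of the nonempty fixed points, each such map is forced up to homotopy.

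Third, I would assemble these fixed-point-level maps into honest pairing structure maps and verify the axioms of \cref{def:operad_pairing}, using the rigidity from the first step to define the maps component-by-component and to check equivariance, unitality, and associativity as identities of combinatorial data rather than as homotopies. I expect this last step to be the main obstacle. Compatibility guarantees that each individual structure map \emph{can} be defined, but the pairing axioms are nonlinear relations tying together the maps at all arities simultaneously, and it is exactly the failure of homotopy invariance that blocks correcting an approximate, axiom-violating pairing into a strict one. The saturated case, where $\cT_1=\cT_2$ is realized by a linear isometry operad (\cref{def: linear isometry operad}) carrying a canonical self-pairing, suggests that the right uniform model is an equivariant analogue of the linear isometries construction adapted to an arbitrary, possibly non-saturated, transfer system; producing such a model, or some other rigid source of coherent pairing data, is the crux of the conjecture.
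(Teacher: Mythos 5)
There is a genuine gap, and you have in fact named it yourself: the statement you are addressing is \cref{conjecture: realizability}, which the paper explicitly leaves open, and your third step---assembling the fixed-point-level maps into honest structure maps satisfying the axioms of \cref{def:operad_pairing}---is precisely the unresolved content. Your second step also conflates two very different things. Compatibility of $(\cT_1,\cT_2)$ guarantees (as in the proof of \cref{prop:operad-action-induces-compatible-pair}) that the \emph{relevant fixed-point sets of the target are non-empty}, and contractibility pins down a map on each fixed-point set up to homotopy; but a pairing is a single $G\times\Sigma$-equivariant map at every arity satisfying \emph{strict} identities, most seriously the distributivity axiom \ref{def:operad_pairing_second_condition}, which interlocks the maps at all arities through the bijection $\nu$ of \cref{subsection:distributivity bijection}. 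Contractible fixed points provide no mechanism for making these identities hold on the nose rather than up to homotopy, and---as the paper stresses via the Steiner operad, whose self-pairing status is open even non-equivariantly---there is no homotopy-invariance principle allowing you to correct an axiom-violating approximate pairing. So ``each map is forced up to homotopy'' does not yield step three, and no construction is supplied that would.

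For comparison, the paper's actual partial progress sidesteps exactly this obstruction by never trying to build the pairing homotopically: it constructs operads from (intersection) monoids, where the strict identities \ref{def:operad_pairing_first_condition} and \ref{def:operad_pairing_second_condition} hold for purely algebraic reasons (\cref{thm: operad pairing from compatible monoids,thm: operad pairing from compatible intersection monoids}), then upgrades to $N_\infty$-operads by the chaotic-category construction and prescribes the indexing system by passing to the suboperads of \cref{theorem: realization from monoids}. Even so, this machinery only reaches the complete-additive case (\cref{theorem:mainthm}), the $J$-local self-pairs (\cref{corollary: coinduced pairs}), the linear isometries/Steiner pairs, meets of realizable pairs (\cref{corollary: realizability is closed under meets}), and the reduction of the conjecture to hull pairs (\cref{conjecture reduced}); the general statement remains a conjecture. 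Your instinct that a rigid, ``linear-isometries-like'' uniform model is the crux is consistent with the paper's philosophy, but absent such a construction your proposal is a research program, not a proof.
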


As evidence towards this conjecture, we produce several families of examples. 

\begin{introtheorem}[\cref{theorem:mainthm}]
Let $G$ be a finite group. Let $(\cT_m,\cT_a)$ be a compatible pair of $G$-transfer systems, with $\cT_a$ complete. Then there exists a pairing of $N_\infty$-operads between $\cO_m$ and $\cO_a$ whose associated transfer systems are $\cT_m$ and $\cT_a$ respectively.
\end{introtheorem}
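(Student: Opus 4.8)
The plan is to construct the pairing directly on point-set (combinatorial) models of the two operads, using the completeness of $\cT_a$ to guarantee that the target spaces are equivariantly universal and hence receive the required structure maps with no obstruction. Fix combinatorial $N_\infty$-operads $\cO_m$ and $\cO_a$ realizing $\cT_m$ and the complete transfer system $\cT_a$, as produced earlier in the paper. A pairing in the sense of \cref{def:operad_pairing} amounts to a coherent family of structure maps
\[
	\lambda_{j;k_1,\dots,k_j}\colon \cO_m(j)\times \cO_a(k_1)\times\cdots\times\cO_a(k_j)\longrightarrow \cO_a(k_1\cdots k_j),
\]
equivariant for $G$ and for the distributivity homomorphism $(\Sigma_{k_1}\times\cdots\times\Sigma_{k_j})\rtimes\Sigma_j\hookrightarrow\Sigma_N$, where $N=k_1\cdots k_j$, and compatible with the operadic composition maps $\gamma$ of $\cO_m$ and $\cO_a$ and with units. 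So it suffices to produce these maps and verify the axioms.

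The main input is that, because $\cT_a$ is complete, the space $\cO_a(N)$ is a universal space for the family $\mathcal{F}_N$ of \emph{all} graph subgroups of $G\times\Sigma_N$. Writing $W=(\Sigma_{k_1}\times\cdots\times\Sigma_{k_j})\rtimes\Sigma_j$, I first observe that $W$ acts freely on the source $X=\cO_m(j)\times\prod_i\cO_a(k_i)$: the normal subgroup $\prod_i\Sigma_{k_i}$ acts freely because each $\cO_a(k_i)$ carries a free $\Sigma_{k_i}$-action (true for any $N_\infty$-operad), and the quotient $\Sigma_j$ acts freely through its free action on $\cO_m(j)$. Consequently every isotropy group of $X$ inside $G\times W$ meets $W$ trivially, i.e.\ is a graph subgroup, and hence lies in the family for which $\cO_a(N)$ is universal after restriction along $W\hookrightarrow\Sigma_N$. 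The universal property of the universal space (equivalently, equivariant obstruction theory over the free $W$-CW structure of $X$) then provides a $(G\times W)$-equivariant map $\lambda_{j;k_1,\dots,k_j}\colon X\to\cO_a(N)$, unique up to $(G\times W)$-equivariant homotopy. It is precisely here that completeness of $\cT_a$ is used: it makes $\mathcal{F}_N$ maximal, so no constraint is placed on which graph subgroups occur as isotropy of $X$, and thus $\cO_m$ may realize an \emph{arbitrary} $\cT_m$. For incomplete $\cT_a$ this constraint is exactly the compatibility condition, recovering the necessity direction of \cref{introthm: main 1}.

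It remains to organize the maps $\lambda_{j;k_1,\dots,k_j}$ into a strict pairing. The $G$- and $W$-equivariance axioms hold by construction, so the content is the two unit conditions and the associativity diagrams relating $\lambda$ to $\gamma_{\cO_m}$ and $\gamma_{\cO_a}$. Each such axiom asserts the equality of two equivariant maps into a target of the form $\cO_a(M)$ for an appropriate wreath-type group $W'$ and arity $M$. Since every such target is again universal for all graph subgroups, and every source again has graph isotropy (by the same freeness argument applied to the iterated products), the two maps in each axiom are automatically $W'$-equivariantly homotopic. The surviving problem is to upgrade these homotopies to strict equalities by choosing the $\lambda_{j;k_1,\dots,k_j}$ coherently.

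The main obstacle is exactly this coherence/strictification step: universality supplies each $\lambda$ and each axiom up to homotopy for free, but an operad pairing requires the diagrams to commute on the nose. I would resolve it by an induction on $j$ and on $N=\prod_i k_i$, at each stage extending the partially-defined structure maps over $X$ (a free $W$-CW complex) so that all lower-arity instances of the axioms already hold, using the contractibility of the relevant fixed points of $\cO_a(M)$ to extend compatibly; alternatively, by fixing the combinatorial model of $\cO_a$ so that the distributive composite of admissible operations is given by an explicit, visibly associative and equivariant formula, reducing the axioms to finite combinatorial checks that the composites remain admissible—which holds because $\cT_a$ is complete. Finally, that the associated transfer systems are $\cT_m$ and $\cT_a$ is immediate from the choice of models, completing the verification that $(\cT_m,\cT_a)$ is realizable.
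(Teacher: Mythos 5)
Your construction of the individual maps $\lambda_{j;k_1,\dots,k_j}$ is fine as far as it goes: completeness of $\cT_a$ does make each $\cO_a(N)$ a universal space for the family of graph subgroups of $G\times\Sigma_N$, the action of $W=(\Sigma_{k_1}\times\cdots\times\Sigma_{k_j})\rtimes\Sigma_j$ on the source is free by the freeness of the $\Sigma$-actions on the levels of an $N_\infty$-operad, and so equivariant obstruction theory produces each $\lambda$ and verifies each axiom of \cref{def:operad_pairing} \emph{up to equivariant homotopy}. But the final step --- upgrading these homotopies to strict equalities --- is a genuine gap, and it is exactly the hard part of the problem. A pairing of operads is not a homotopy-invariant structure (this is \cref{rema:operad-construction-linear-steiner}: it is open whether the Steiner operad, an $E_\infty$-operad, admits a pairing with itself), so no soft rectification of a homotopy-coherent family of $\lambda$'s can be expected. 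Your proposed cell-by-cell induction does not work as described: the axioms constrain $\lambda_{j;k_1,\dots,k_j}$ on the images of the composition maps $\gamma$, which are not subcomplexes of the free $W$-CW source; a given point of $\cO_m(j_+)$ lies in the image of $\mu$ in many ways, imposing simultaneously many constraints whose mutual consistency is precisely the coherence you are trying to establish; and the same map appears as both inner and outer layer in axioms (a) and (b) (e.g.\ with all inner arities equal to $1$, axiom (a) constrains $\lambda_{j;1,\dots,1}$ against composites of itself), so the constraints are not well-founded under your induction on $j$ and $N$. Note also that if this strictification scheme worked, it would prove all of \cref{conjecture: realizability}, not just the complete-$\cT_a$ case: as you yourself observe, for incomplete $\cT_a$ the compatibility condition supplies nonemptiness of exactly the fixed points your obstruction argument needs, yet the authors leave the general conjecture open.

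Your second ``alternative'' --- choosing the combinatorial model so that the pairing is given by an explicit, visibly associative formula --- is indeed the right idea, but as stated it is a gesture at the crux rather than a proof; the entire content of the theorem lies in exhibiting such a model. The paper does this concretely: it builds operads $\cO^{\vee}(M)$ in $\Set$ from intersection monoids, shows that a pairing of intersection monoids $\xi\colon M\times N\to N$ induces a \emph{strict} pairing of the operads by the pointwise formula $\lambda(p,x_1,\dots,x_k)(\bm{\ell})=\prod_r \xi(p(r),x_r(\ell_r))$ (\cref{thm: operad pairing from compatible intersection monoids}), and then transports this strict pairing through the strong monoidal functors of coinduction, chaotic category, and classifying space to obtain a pairing of $N_\infty$-operads with complete indexing systems. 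Completeness of $\cT_a$ enters at a different point than in your argument: the multiplicative operad is cut down to the suboperad $E\cO^{\vee}_G(M)_{\cI}$ realizing $\cT_m$ (\cref{theorem: realization from monoids}), and restricting the pairing in the $\cP$-variable is automatically harmless, whereas restricting the additive side would require checking that the outputs of $\lambda$ remain admissible --- a check that completeness renders vacuous. Until you supply an explicit model with strictly verified axioms (or a rectification theorem for May pairings, which does not exist), the proposal does not prove the statement.
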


\begin{introtheorem}[\cref{lemma coinduced operad pairs}]
    Let $H$ be a subgroup of a finite group $G$ and let $(\cT_m,\cT_a)$ be a compatible pair of $H$-transfer systems. Let $\cT_m^G$ and $\cT_a^G$ be the coinduced $G$-transfer systems, in the sense of \cref{def coinduced transfer system}. Then $(\cT_m^G, \cT_a^G)$ is a compatible pair of transfer systems, and if $(\cT_m, \cT_a)$ is realizable, so is $(\cT_m^G,\cT_a^G)$.
\end{introtheorem}

We also verify that the canonical pairing of operads between the linear isometries and Steiner operads \cite{Steiner} extends to their $G$-equivariant versions. 

\begin{introtheorem}[\cref{example of realizable pairs 1}]
    Let $G$ be a finite group and let $U$ be a $G$-universe. Let $\cT_{\cL_G(U)}$ be the transfer system associated to the equivariant linear isometries operad $\cL_G(U)$ and let $\cT_{\cK_G(U)}$ be the transfer system associated to the equivariant Steiner operad $\cK_G(U)$. Then the pair $(\cT_{\cL_G(U)},\cT_{\cK_G(U)})$ is realized by the pairing of operads between $\cL_G(U)$ and $\cK_G(U)$.
\end{introtheorem}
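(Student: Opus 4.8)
The plan is to show that Steiner's canonical non-equivariant operad pairing between $\cL(U)$ and $\cK(U)$ \cite{Steiner} is automatically $G$-equivariant once $U$ is taken to be a $G$-universe, so that it constitutes a pairing of the $G$-operads $\cL_G(U)$ and $\cK_G(U)$ in the sense of \cref{def:operad_pairing}; realizability then follows with essentially no extra work. First I would recall the shape of Steiner's pairing: it consists of continuous maps
\[
\lambda_{j;k_1,\dots,k_j}\colon \cL_G(U)(j)\times\cK_G(U)(k_1)\times\cdots\times\cK_G(U)(k_j)\longrightarrow \cK_G(U)(k_1\cdots k_j),
\]
assembled entirely from composition of linear isometric embeddings of $U$ and the induced action of such embeddings on spaces of Steiner paths. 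Since $\cL_G(U)(j)$ is the space of linear isometric embeddings $U^{\oplus j}\hookrightarrow U$, and the Steiner spaces are likewise built from isometric self-embeddings of $U$, both are functorial in the inner-product space $U$ with respect to isometric embeddings. Because the $G$-actions on $\cL_G(U)$ and $\cK_G(U)$ are by conjugation through the action of $G$ on $U$, naturality of each $\lambda_{j;k_1,\dots,k_j}$ in $U$ shows that these maps commute with the $G$-action; hence they are $G$-equivariant.

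Second, I would check that these $G$-equivariant maps still satisfy May's coherence axioms (associativity, unit, and $\Sigma$-equivariance) for an operad pairing. This is immediate: the axioms are equalities of continuous maps that hold in $\Top$ by Steiner's theorem, and restricting attention to the $G$-equivariant category alters none of the underlying maps. Thus $\{\lambda_{j;k_1,\dots,k_j}\}$ is a pairing of $G$-operads between $\cL_G(U)$ and $\cK_G(U)$.

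Third, I would invoke that $\cL_G(U)$ and $\cK_G(U)$ are both $N_\infty$-operads \cite{BlumbergHillOperadic}, so their associated transfer systems $\cT_{\cL_G(U)}$ and $\cT_{\cK_G(U)}$ are defined and the pairing just constructed is a pairing of $N_\infty$-operads. By \cref{introthm: main 1} the pair $(\cT_{\cL_G(U)},\cT_{\cK_G(U)})$ is therefore compatible, and taking $\cO_1=\cL_G(U)$ and $\cO_2=\cK_G(U)$ in the definition of realizability, this pairing realizes the pair. This is exactly the content of the theorem.

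I expect the main obstacle to be the $G$-equivariance verification in the first step. Carrying it out requires unwinding Steiner's definition of the pairing, in particular the way the linear isometries operad acts on spaces of Steiner paths, and confirming that every constituent map is natural with respect to the $G$-action on $U$ and not merely with respect to arbitrary (non-equivariant) isometries of the underlying space. Once that naturality is established, the remainder of the argument is formal, since an operad pairing is specified by equations among maps and the equivariant structure is recorded rather than imposed.
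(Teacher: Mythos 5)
Your proposal is correct, and it reaches the theorem by a genuinely different route than the paper. You take Steiner's classical pairing as given, check that each structure map is equivariant for the conjugation $G$-actions --- concretely, that $(gfg^{-1}) \ast (gh_1g^{-1},\dots,gh_ng^{-1}) = g\,\big(f \ast (h_1,\dots,h_n)\big)\,g^{-1}$, which holds because $\im(gfg^{-1}) = g(\im f)$ and $g$ preserves orthogonal complements --- and then observe that May's axioms, being equalities of underlying maps, persist in the equivariant category; compatibility and realizability then follow from \cref{prop:operad-action-induces-compatible-pair} and the definition of realizability. The paper instead factors the entire construction through its intersection-monoid machinery: \cref{lem:operad_pairing_linearisometries_Steiner} exhibits a pairing of intersection monoids $(\cL(1),\cK(1),\xi)$, re-proving the essential centrality and disjunction axioms by hand via the decomposition $\RR^\infty \cong \im(f)\oplus\im(f)^{\perp}$; \cref{thm: operad pairing from compatible intersection monoids} promotes this single arity-one datum to a full operad pairing on $\cO^{\vee}(\cL(1)) = \cL$ and $\cO^{\vee}(\cK(1)) = \cK$; and by \cref{rem:Gaction_monoids} equivariance reduces to the one condition $\xi(g\cdot f, g\cdot h) = g\cdot\xi(f,h)$, yielding \cref{proposition: equivariant LI steiner pair} and hence \cref{example of realizable pairs 1}. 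Your route buys brevity and a clean appeal to the literature for the non-equivariant coherences; the paper's route buys a self-contained verification in which all higher-arity coherences collapse to monoid-level axioms checked once in arity one (in particular exposing why centrality can only hold for isometries with orthogonal images --- $\cK(1)$ is noncommutative, so an unrestricted monoid pairing is impossible, cf.\ \cref{remark:compatible-pair-monoids-N-commutative}), and the same machinery is reused for the paper's other realization theorems. One small precision worth fixing: in your first step you should ask for naturality under isometric \emph{isomorphisms} $g\colon U\to U$ rather than arbitrary isometric embeddings, since the construction $f\ast\bm{e}$ involves the orthogonal complement $\im(f)^{\perp}$ inside the fixed ambient universe, which is preserved by isomorphisms but not by general embeddings; this suffices, as $G$ acts invertibly.
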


We also provide tools for building new realizable pairs from known cases in \cref{subsec:realizable_pairs_from_known}. Finally, we show that \cref{conjecture: realizability} can be reduced to the realizability of certain types of compatible pairs of transfer systems. 

\begin{introtheorem}[\cref{conjecture reduced}]
    \cref{conjecture: realizability} is equivalent to the claim that $(\Hull(\cT), \cT)$ is realizable for any transfer system $\cT$, where $\Hull(\cT)$ is the \emph{multiplicative hull} of $\cT$ \cite[Proposition 7.84]{BlumbergHillBiincomplete}. 
\end{introtheorem}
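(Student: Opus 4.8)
The plan is to prove the stated equivalence by establishing the two implications separately, with the reverse implication---that the displayed special case forces the full conjecture---carrying essentially all of the content.

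\emph{The conjecture implies the special case.} This direction is immediate once we observe that $(\Hull(\cT), \cT)$ is itself a compatible pair. By the defining property of the multiplicative hull \cite[Proposition 7.84]{BlumbergHillBiincomplete}, $\Hull(\cT)$ is the largest multiplicative transfer system compatible with $\cT$; in particular $(\Hull(\cT), \cT)$ is compatible. Granting \cref{conjecture: realizability}, it is therefore realizable.

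\emph{The special case implies the conjecture.} Assume $(\Hull(\cT), \cT)$ is realizable for every transfer system $\cT$, and let $(\cT_m, \cT_a)$ be an arbitrary compatible pair. The key observation is the containment $\cT_m \subseteq \Hull(\cT_a)$: since $(\cT_m, \cT_a)$ is compatible and $\Hull(\cT_a)$ is the maximal multiplicative transfer system compatible with $\cT_a$, the system $\cT_m$ lies below the hull. By hypothesis there is a pairing of $N_\infty$-operads realizing $(\Hull(\cT_a), \cT_a)$; write $\cO_H$ for the multiplicative operad (transfer system $\Hull(\cT_a)$), $\cO_a$ for the additive operad (transfer system $\cT_a$), and $\lambda$ for the pairing. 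Since every transfer system is realizable by an $N_\infty$-operad \cite{BonventrePereira,GutierrezWhite,RubinCombinatorial}, choose some $N_\infty$-operad $\cO_m$ realizing $\cT_m$.

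I would then build the desired pairing from the product operad. The levelwise product $\cO_m \times \cO_H$ is again an $N_\infty$-operad, and because a graph subgroup has nonempty (hence contractible) fixed points on the product exactly when it does on both factors, its transfer system is the intersection $\cT_m \cap \Hull(\cT_a)$, which equals $\cT_m$ by the containment above. The projection $\pi \colon \cO_m \times \cO_H \to \cO_H$ is an equivariant map of operads, so precomposing $\lambda$ in its multiplicative inputs with $\pi$ produces a pairing of $\cO_m \times \cO_H$ over $\cO_a$. This pairing realizes $(\cT_m, \cT_a)$, completing the reverse implication; this last step is an instance of the monotonicity principle for realizable pairs developed in \cref{subsec:realizable_pairs_from_known}, which one may invoke directly.

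I expect the main obstacle to be purely bookkeeping: verifying that precomposition with $\pi$ preserves every axiom in the definition of an operad pairing (\cref{def:operad_pairing}). This reduces to the fact that $\pi$ is a map of operads---it is equivariant and commutes with the units and structure maps---so each equivariance, associativity, and unit condition for $\lambda$ transports unchanged along $\pi$. The conceptual heart of the argument, by contrast, is the inequality $\cT_m \subseteq \Hull(\cT_a)$, which is precisely the assertion that the multiplicative hull is the correct object to isolate: it is the maximal compatible partner of $\cT_a$, and its realizability controls that of every smaller multiplicative system.
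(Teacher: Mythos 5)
Your proof is correct, but the reverse implication takes a genuinely different route from the paper's. The paper first proves the intermediate proposition that realizability of $(\Hull(\cI),\cI)$ forces realizability of every compatible $(\cI',\cI)$, and it does so by combining two of its own tools: \cref{theorem:mainthm} (which realizes $(\cI',\cI_c)$ with $\cI_c$ complete, via the intersection-monoid machinery) and \cref{corollary: realizability is closed under meets} (meets of realizable pairs are realizable, which rests on \cref{lemma:products_of_pairs_are_pairs}, the levelwise product of two pairings). Applying the meet to $(\Hull(\cI),\cI)$ and $(\cI',\cI_c)$ gives $(\Hull(\cI)\wedge\cI',\,\cI\wedge\cI_c)=(\cI',\cI)$, using the same containment $\cI'\leq\Hull(\cI)$ that you identify as the conceptual heart. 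Your argument instead takes an arbitrary $N_\infty$-operad $\cO_m$ realizing $\cT_m$, forms the product $\cO_m\times\cO_H$ with the multiplicative operad of the given pairing, and precomposes $\lambda$ along the projection $\pi$ in the multiplicative slot; since $p$ enters every axiom of \cref{def:operad_pairing} only through $\lambda$ and the operad structure maps, and $\pi$ is a map of operads, all axioms transport, as you say. This buys something the paper's route does not: you bypass \cref{theorem:mainthm} entirely, needing only the classical fact that every transfer system is realized by some $N_\infty$-operad, and your ``pull back a pairing along an operad map in the multiplicative variable'' principle is a clean general fact worth isolating. One small inaccuracy: you describe this last step as ``an instance of the monotonicity principle developed in \cref{subsec:realizable_pairs_from_known},'' but the paper develops no such precomposition principle there---its only tool is the meet-via-products construction---so your invocation is a misattribution; it is harmless, however, since you supply the verification yourself.
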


Constructing examples of pairings of operads is a difficult problem. In the course of proving the theorems above we produce a new method of constructing operad pairings which should be of independent interest, as it can be used to produce new examples of operad pairings, even non-equivariantly.  The key input to this construction is the observation that one can produce operads, in the category of sets, from monoids.  More precisely, given a monoid $M$, there is an operad $\mathcal{O}(M)$ with $\mathcal{O}(M)(n)=M^n$, and this construction is functorial in maps of monoids.

The resulting operads are symmetric but do not have a free action of the symmetric groups (they are not \textit{$\Sigma$-free}). However, if the monoid $M$ is equipped with an \emph{intersection structure} (\cref{definition: intersection monoid}), then one can modify the standard construction to produce a $\Sigma$-free operad $\mathcal{O}^{\vee}(M)$.  The notion of intersection monoids was introduced in independent work of the sixth author for this purpose \cite{szczesny2025realizingtransfersystemssuboperads}.  

Building on this, we define the notion of a \emph{pairing of intersection monoids}, which mimics the data of a pairing of operads.  We show that every pairing of intersection monoids produces a pairing on the associated symmetric operads.
\begin{introtheorem}[\cref{thm: operad pairing from compatible monoids}]
    Given two intersection monoids $M$ and $N$ and a pairing $\xi\colon M\times N\to N$ there is a pairing of operads between $\mathcal{O}^{\vee}(M)$ and $\mathcal{O}^{\vee}(N)$.
\end{introtheorem}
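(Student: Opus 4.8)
The plan is to construct the pairing at the level of the underlying set operads built from $M$ and $N$, and then to check that it descends to the $\Sigma$-free modifications. Recall from \cref{def:operad_pairing} that a pairing exhibiting $\cO^{\vee}(M)$ as acting on $\cO^{\vee}(N)$ consists of a system of maps
\[
\lambda_{j;k_1,\dots,k_j}\colon \cO^{\vee}(M)(j)\times \cO^{\vee}(N)(k_1)\times\cdots\times\cO^{\vee}(N)(k_j)\longrightarrow \cO^{\vee}(N)(k_1\cdots k_j)
\]
subject to equivariance, unit, and two associativity axioms. Writing an element of $\cO^{\vee}(M)(j)\subseteq M^{j}$ as $(m_1,\dots,m_j)$ and an element of $\cO^{\vee}(N)(k_i)\subseteq N^{k_i}$ as $\vec n^{(i)}=(n^{(i)}_1,\dots,n^{(i)}_{k_i})$, and identifying the index set of $N^{k_1\cdots k_j}$ with $\prod_{i=1}^{j}\{1,\dots,k_i\}$ via lexicographic order, I would set
\[
\lambda\bigl((m_1,\dots,m_j);\vec n^{(1)},\dots,\vec n^{(j)}\bigr)_{(a_1,\dots,a_j)}=\prod_{i=1}^{j}\xi\bigl(m_i,n^{(i)}_{a_i}\bigr),
\]
the product being taken with the monoid operation of $N$.

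First I would ignore $\Sigma$-freeness and verify that this formula defines a pairing between the ambient operads $\cO(M)$ and $\cO(N)$, where every axiom becomes a coordinatewise identity. Equivariance in the $\Sigma_{k_i}$-directions is immediate, since permuting the entries of $\vec n^{(i)}$ merely reindexes the $a_i$-coordinate; equivariance in the $\Sigma_j$-direction follows from the symmetry of the defining product together with commutativity of the monoid operations. The unit axioms reduce to the normalization identities for $\xi$ (that $\xi(1_M,-)$ is the identity and that $\xi$ is trivial on the additive unit $1_N$), which are part of the data of a pairing of intersection monoids. The two associativity axioms---compatibility with the operad composition $\gamma$ of $\cO(N)$ and with that of $\cO(M)$---unwind, after expanding the product index sets, to the distributivity and multiplicativity of $\xi$ (that $\xi(m,-)$ is a monoid homomorphism of $N$ and that $\xi(m\cdot_M m',-)=\xi(m,\xi(m',-))$), combined with associativity of the two monoid operations. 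These computations are bookkeeping-heavy but routine.

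The substance of the argument is showing that $\lambda$ restricts to the $\Sigma$-free suboperads: that whenever $(m_1,\dots,m_j)\in\cO^{\vee}(M)(j)$ and each $\vec n^{(i)}\in\cO^{\vee}(N)(k_i)$, the output tuple again satisfies the intersection condition defining $\cO^{\vee}(N)(k_1\cdots k_j)$. Granting this, the pairing axioms for the restriction are inherited from those already verified on $\cO(M)$ and $\cO(N)$, since each axiom is an equality of maps and, by \cite{szczesny2025realizingtransfersystemssuboperads}, $\cO^{\vee}$ is the sub-operad of $\cO$ cut out by the intersection condition. I would prove the closure by analyzing how two distinct output coordinates $(a_1,\dots,a_j)\neq(a'_1,\dots,a'_j)$ differ: they must disagree in some $i$, where $n^{(i)}_{a_i}\neq n^{(i)}_{a'_i}$, and the intersection structure on $N$ together with the compatibility of $\xi$ with the intersection operation (\cref{definition: intersection monoid}) controls the intersection of the corresponding products, forcing the output to meet the same genericity requirement that defines $\cO^{\vee}(N)$.

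This last step is where I expect the main difficulty to lie, and it is precisely why the hypothesis asks for a pairing of \emph{intersection} monoids rather than merely a map of monoids: the naive product formula need not land in the $\Sigma$-free suboperad for an arbitrary $\xi$, and it is the axioms relating $\xi$ to the two intersection structures that guarantee closure. The key lemma to isolate is therefore the compatibility of $\xi$ with intersections---roughly, that the intersection of $\xi(m,n)$ and $\xi(m,n')$ agrees with $\xi(m,-)$ applied to the intersection of $n$ and $n'$, together with its interaction with the product over $i$---from which the closure, and hence the theorem, follows.
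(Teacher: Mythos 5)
Your formula for $\lambda$ is exactly the paper's, and your instinct that closure of the output under the disjointness condition is one place the intersection structure enters is correct; but your overall decomposition has a genuine gap: the intermediate claim that $\lambda$ defines a pairing of the \emph{ambient} operads $\cO(M)$ and $\cO(N)$ is false under the hypotheses of this theorem. For a pairing of intersection monoids (\cref{definition: pairing of intersection monoids}), centrality is weakened: $\xi(m_1,n_1)\cdot\xi(m_2,n_2)=\xi(m_2,n_2)\cdot\xi(m_1,n_1)$ is required only when $m_1\vee m_2$, and in particular $N$ need not be commutative --- the motivating example is $M=\cL(1)$, $N=\cK(1)$, where composition of Steiner paths does not commute, which is exactly why no pairing of plain monoids exists there (cf.\ \cref{remark:compatible-pair-monoids-N-commutative}). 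Your verification of $\Sigma_j$-equivariance appeals to ``commutativity of the monoid operations,'' and indeed both axiom \labelcref{action-sigma} and the interchange axiom \labelcref{def:operad_pairing_second_condition} of \cref{def:operad_pairing} require reordering the factors $\xi(m_i,-)$ in the product over $i$. On the full operad $\cO(M)$ the entries $m_i$ may coincide or fail to be disjoint (e.g.\ a constant tuple), and then nothing licenses the reordering. So the plan ``prove the pairing on $\cO(M),\cO(N)$, then restrict'' cannot work: the pairing axioms themselves hold only on the suboperads $\cO^{\vee}$, where pairwise disjointness of the entries of $p\in\cO^{\vee}(M)(k)$ makes the partial centrality available. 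This is precisely how the paper proceeds: the ambient statement (\cref{thm: operad pairing from compatible monoids}) is proved only for pairings of plain monoids, where full centrality holds, and for intersection monoids the paper verifies well-definedness and then re-runs the axiom checks directly on $(\cO^{\vee}(M),\cO^{\vee}(N))$, observing that centrality for disjoint elements suffices everywhere the earlier proof invoked it.

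Two smaller corrections to your closure step. First, $\vee$ is a disjointness \emph{relation}, not an operation, so the key input is not that $\xi(m,-)$ commutes with intersections but the disjunction axiom: $n_1\vee n_2$ implies $\xi(m,n_1)\vee\xi(m,n_2)$. Second, disjunction alone does not finish the argument: from $\bm{\ell}\neq\bm{\ell}'$ you obtain disjointness in a \emph{single} factor $s$, namely $\xi(p(s),x_s(\ell_s))\vee\xi(p(s),x_s(\ell'_s))$, and to conclude $\prod_{r}\xi(p(r),x_r(\ell_r))\vee\prod_{r}\xi(p(r),x_r(\ell'_r))$ you must first use centrality (legitimate here because the $p(r)$ are pairwise disjoint) to move the $s$-th factors to the front, and then use the intersection-monoid axiom that disjointness is preserved under multiplication ($x_1\vee x_2$ implies $x_1y_1\vee x_2y_2$). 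So the intersection axioms are needed twice over --- in the closure argument \emph{and} in the pairing axioms themselves --- not only for closure as you suggest.
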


We use this theorem to produce pairings of operads in the category of sets, where we have fairly explicit control on the structure of the operads.  Following ideas of Rubin, we upgrade these to pairings of $N_{\infty}$-operads.  Moreover, we gain control on the transfer systems associated to the resulting $N_{\infty}$-operads using recent results of the sixth author. 

\subsection{Notation}
\label{notation}
Throughout this paper, we use the letter $G$ for a finite group. For a natural number $n$, we write $\underline{n} \coloneqq \{1,2,\ldots,n\}$. We also follow the conventions of the literature and write $\Sigma_n$ for the symmetric group on $n$ letters. 

We will also frequently refer to the categories $\Set$ of sets and functions, $\Top$ of (compactly generated and weakly Hausdorff) topological spaces and continuous maps, and $\Cat$ of small categories and functors. We write $\Set^G$, $\Top^G$, and $\Cat^G$ for the respective categories of $G$-objects and $G$-equivariant maps in each.

\subsection*{Acknowledgements}
The authors warmly thank Mike Hill for helpful conversations and Jonathan Rubin for discussions that led to the results in \cref{subsec:realizable_pairs_from_known}.  
We would also like to thank the American Mathematical Society (AMS) for hosting 
the 2024 Mathematical Research Community (MRC) on Homotopical Combinatorics, 
where this research project began. This AMS MRC was supported by NSF grant DMS-1916439. 
DC was partially supported by NSF grant DMS-2135960. 
MC was partially supported by NSF grant DMS-2405030, DMS-2052846, and DMS-2104348.
DM was partially supported by NSF grant DMS-2135884. 
PSO was partially supported by JSPS KAKENHI grant JP25K17242. 
AO was partially supported by NSF grant DMS–2204365. 

\section{Operads and \texorpdfstring{$N_\infty$}{N-infinity}-operads}

We recall the definitions and present some examples of operads (\cref{subsec:background_operads}) and $N_\infty$-operads (\cref{subsec:background_Ninfinity}). For more comprehensive references, see \cite{May:Geometry,BlumbergHillOperadic}. 

\subsection{Operads}\label{subsec:background_operads}
The main purpose of an operad is to encode different additive and multiplicative structures, together with properties of these operations, like associativity and commutativity, up to coherent homotopies. In particular, an algebra over an operad is a space with many possible choices of operations, which are precisely parametrized by the points in the different levels of the operad.  

Let $(\cat{C}, \times, \ast)$ denote one of the Cartesian monoidal categories $(\Set, \times, \ast)$, $(\Top,\times,\ast)$, $(\Cat,\times,\ast)$, or their $G$-equivariant versions for a finite group $G$. In particular, we assume that the objects of $\cat{C}$ have underlying sets and morphisms can be expressed as functions with extra structure.

Before we discuss operads, we first introduce some notation.

\begin{notation} 
	\label{notation:block permutation}
	Let $k_1, \ldots, k_n$ be positive integers and let $k_+ = \sum_{i=1}^n k_i$. We identify the finite set $\underline{k_+}$ with the coproduct 
    $\underline{k_1}  \amalg \dots \amalg \underline{k_n}$ 
    via the bijection that orders all terms of $\underline{k_i}$ before any term of $\underline{k_j}$ for $i < j$, while maintaining the natural ordering within each $\underline{k_i}$.     
	\begin{enumerate}[(a)]
		\item Let $\sigma \in \Sigma_n$.  We write 
		\[
			\sigma_+(k_1, \dots, k_n) \in \Sigma_{k_+}
		\]
		for the permutation obtained by writing
		\(
            \underline{k_+} \cong \underline{k_1} \amalg  \dots \amalg \underline{k_n} 
		\) 
        via the bijection described above
		and permuting the summands according to $\sigma$.
		\item Let $\tau_i \in \Sigma_{k_i}$ for each $i$, $1 \leq i \leq n$. We write 
		\[
			\tau_1  \oplus \dots \oplus \tau_n \in \Sigma_{k_+}
		\]
		for the permutation of $\underline{k_+}$ obtained by writing
		\(
            \underline{k_+} \cong \underline{k_1} \amalg  \dots \amalg \underline{k_n}
		\) 
        via the bijection described above
		and acting on $\underline{k_i}$ with $\tau_i$ for each $i \in \underline{n}$.	
	\end{enumerate}
\end{notation}

\begin{definition}[{\cite[Definition 2.1]{RubinCombinatorial} and~\cite[Definition 1.1]{May:Geometry}}]
\label{def:reduced-operad}
    A \emph{(reduced) symmetric operad $\cO$ in 
    $\cat{C}$} is a collection of objects $\cO(m)$ in $\cat{C}$ 
    for all non-negative integers $m$, 
    together with structure maps 
    \[
        \gamma = \gamma_{n,k_1,\dots,k_n}\colon \cO(n)\times \cO(k_1)\times \cdots\times\cO(k_n)\to \cO(k_+) 
    \]
    for all integers $n, k_1, \dots, k_n \geq 0$ 
    subject to the following conditions. 
    \begin{enumerate}
        \item (\emph{reduced}) The zero-th object is the monoidal unit $\cO(0)=\ast$. 
	    \item (\emph{identity}) There is an element $\id \in \cO(1)$ such that for any $x\in \cO(k)$ we have $\gamma(\id;x) = x$.  When $\mathcal{O}$ is an operad in $\Set^G$,  $\Top^G$, or $\Cat^G$, we demand that $\id$ is $G$-fixed. 
        \item\label{operad associativity} 
        (\emph{associativity}) For any non-negative integer $j$, non-negative integers $n_\ell$ for $1 \leq \ell \leq j$, and non-negative integers $k_{\ell, i}$ for $1 \leq \ell \leq j$, $1 \leq i \leq n_\ell$, and elements $x_{\ell}\in \cO(n_{\ell})$ and $y_{\ell,i}\in \cO(k_{\ell,i})$, and $z\in \cO(j)$, we have
        \[
            \gamma\bigg(\gamma\big(z;(x_\ell)_{\ell =1}^j\big);\big((y_{\ell,i})_{i=1}^{n_\ell}\big)_{\ell=1}^j\bigg) = \gamma\bigg(z;\gamma\big(x_1;(y_{1,i})_{i=1}^{n_1}\big),\dots,\gamma\big(x_j;(y_{j,i})_{i=1}^{n_j}\big)\bigg).
        \]
        \item\label{operad symmetry} 
        (\emph{symmetry}) There is a right action of the symmetric group 
	    $\Sigma_n$ on $\cO(n)$. This action is subject to the conditions
        \medskip
        \begin{enumerate}[label=(\roman*), itemsep=1em]
            \item \label{operad symmetry 1}
            \(
            \gamma(x \cdot \sigma ; y_1,\dots,y_n) 
                =  
            \gamma(x ; y_{\sigma^{-1}(1)},\dots,y_{\sigma^{-1}(n)}) 
                \cdot \sigma_+(k_1,\dots,k_n) 
            \)
            \item \label{operad symmetry 2}
            \(
            \gamma(x ; y_1 \cdot \tau_1,\dots,y_n \cdot \tau_n) 
                 = 
            \gamma(x ; y_{1},\dots,y_{n}) \cdot (\tau_1 \oplus \dots \oplus \tau_n) 
            \)
        \end{enumerate}
        \medskip
        for any $x\in \cO(n)$, $y_{i}\in \cO(k_i)$, $\sigma\in \Sigma_n$ and $\tau_i\in \Sigma_{k_i}$.
    \end{enumerate}
\end{definition}

\begin{remark}
The formulas (c) and (d) above can be expressed as the commutativity of the following diagrams.
\begin{equation}
\tag*{\ref{operad associativity}}
\begin{tikzcd}
	\displaystyle{
        \cO(j)\times\prod_{\ell=1}^j\left(\cO(n_\ell) 
            \times 
        \prod_{i=1}^{n_\ell}\cO(k_{\ell,i})\right)
    } 
        &
        & 
    \displaystyle{\cO(j)\times\prod_{\ell=1}^j \cO(k_{\ell,+})} 
        \\
	\displaystyle{
        \left(\cO(j)\times\prod_{\ell=1}^j \cO(n_\ell)\right) 
        \times 
        \prod_{\ell=1}^j\prod_{i=1}^{n_\ell}\cO(k_{\ell,i})
    } 
        \\
    \displaystyle{
        \cO(n_+)\times\prod_{\ell=1}^j\prod_{i=1}^{n_\ell}\cO(k_{\ell,i})
    } 
        &
        & 
    {\cO(k_{+})}
	   \arrow["{\id\times \left( \prod \gamma \right)}", from=1-1, to=1-3]
    	\arrow["\cong"', from=1-1, to=2-1]
    	\arrow["\gamma", from=1-3, to=3-3]
    	\arrow["{\gamma\times \id}"', from=2-1, to=3-1]
    	\arrow["\gamma"', from=3-1, to=3-3]
\end{tikzcd}
\end{equation}

\begin{equation}
\tag*{\ref{operad symmetry}\,\ref{operad symmetry 1}}
\begin{tikzcd}[column sep=small]
	\displaystyle{\cO(n)\times \prod_{i=1}^n \cO(k_i)} 
        &
        & 
    \displaystyle{\cO(n)\times \prod_{i=1}^n \cO(k_{\sigma^{-1}(i)})} 
        \\
        &
        & 
    {\cO(k_+)} 
        \\
	\displaystyle{\cO(n)\times \prod_{i=1}^n \cO(k_i)} 
        &
        & 
    {\cO(k_+)}
	   \arrow["\cong", from=1-1, to=1-3]
	   \arrow["{\sigma\times \id}"', from=1-1, to=3-1]
	   \arrow["\gamma", from=1-3, to=2-3]
	   \arrow["{\sigma_+(j_1,\dots,j_n)}", from=2-3, to=3-3]
	   \arrow["\gamma"', from=3-1, to=3-3]
\end{tikzcd}
\end{equation}
\begin{equation}
\tag*{\ref{operad symmetry}\,\ref{operad symmetry 2}}
\begin{tikzcd}[column sep=small]
	\displaystyle{\cO(n)\times \prod_{i=1}^n \cO(k_i)} 
        &
        & 
    {\cO(k_+)} 
        \\
	\displaystyle{\cO(n)\times \prod_{i=1}^n \cO(k_i)} 
        &
        & 
    {\cO(k_+)}
	   \arrow["\gamma", from=1-1, to=1-3]
    	\arrow["{\id\times \left( \prod\tau_i \right)}"', from=1-1, to=2-1]
    	\arrow["{(\tau_1\oplus\dots\oplus\tau_n)}", from=1-3, to=2-3]
    	\arrow["\gamma"', from=2-1, to=2-3]
\end{tikzcd}
\end{equation}
\end{remark}

\begin{definition}
    A \emph{morphism of operads} $f\colon \cO\to \cO'$ is a collection of $\Sigma_n$-equivariant morphisms $f_n\colon \cO(n)\to \cO'(n)$ such that $f_1(\id_{\cO}) = \id_{\cO'}$, and such that the diagram
    \[
        \begin{tikzcd}
            \cO(n)\times\cO(k_1)\times\dots\times \cO(k_n) \ar[d,"f_n\times f_{k_1}\times \dots \times f_{k_n}"'] \ar[r,"\gamma"] & \cO(k_+) \ar[d,"f_{k_+}"] \\
            \cO'(n)\times\cO'(k_1)\times\dots\times \cO'(k_n) \ar[r,"\gamma'"] & \cO'(k_+)
        \end{tikzcd}
    \]
    commutes for all choices of $n$ and $k_1,\dots, k_n$. 
\end{definition}

The most basic and fundamental example of an operad arises by collecting all $n$-ary operations on a fixed object $X$. Concretely, these are just the morphisms $X^n \to X$ in the ambient category, organized into the following operad.

\begin{example}
    Fix an object $X \in \cat{C}$ and define $\mathcal{E}nd_X(m) \coloneqq \cat{C}(X^m, X)$. 
    The collection of sets $\mathcal{E}nd_X(m)$ for $m \geq 0$, equipped with the natural composition maps and $\Sigma_n$-action given by permuting the inputs, forms an operad in $(\Set, \times, \ast)$ called the \textit{endomorphism operad} of $X$.
\end{example}

In this way, an operad should be thought of as parametrizing abstract $n$-ary operations. 
The notion of an algebra over an operad then formalizes how such abstractly parametrized operations can act concretely on a given object.

\begin{definition}[{\cite[Lemma 1.4]{May:Geometry}}]
    An \emph{algebra over an operad $\cO$} is an object $X\in \cat{C}$ together with maps
    \[
        \mu_n\colon \cO(n)\times X^{n}\to X
    \]
    that are associative, unital, and compatible with the $\Sigma_n$-actions.
    Equivalently, an $\cO$-algebra structure on $X$ is specified by a morphism of operads from $\cO$ to the endomorphism operad $\mathcal{E}nd_X$.
\end{definition}

An $\cO$-algebra structure on $X$ realizes the operations parametrized by $\cO$ as operations on $X$. More precisely, for any $n\geq 0$ and any $\alpha\in \cO(n)$, the structure map gives rise to an $n$-ary operation $X^n\to X$ given by $x\mapsto \mu_n(\alpha,x)$. The compatibility conditions ensure that these operations assemble together in the way specified by the operad.

A central example of an operad in topology is the case of \emph{$E_\infty$-operads} in spaces, which encode operations that are associative and commutative up to all higher homotopies. 

\begin{definition}
    An operad $\mathcal{E}$ in $\Top$ is called an \emph{$E_\infty$-operad} if for each $n \geq 0$, the space $\mathcal{E}(n)$ is contractible and has a free action of the symmetric group $\Sigma_n$. That is, $\mathcal{E}(n)$ is a universal space for the group $\Sigma_n$.
\end{definition}

These operads describe algebraic structures on topological spaces whose operations are unique up to a contractible space of choices (``up to all coherent homotopies"). We next present three specific examples of $E_\infty$-operads that play a prominent role below: the linear isometries operad, the little disks operad, and the Steiner operad. 

\subsubsection*{Linear isometries operad}
Let $\mathcal{J}$ be the category whose objects are finite or countably infinite dimensional real inner product spaces and whose morphisms are linear isometric embeddings. A linear isometric embedding is a linear transformation that preserves the inner product and is an isomorphism onto its image. Since we are working in infinite dimensions, such a map must be injective but need not be an isomorphism. Giving the mapping spaces the compact-open topology, $\mathcal{J}$ becomes a topological category.

Let $\RR^\infty\coloneqq \bigoplus_{n \geq 0} \RR$ be the real vector space of $\mathbb{N}$-indexed vectors with finite support. We turn this into an inner product space with the standard inner product 
\(
    \langle x, y \rangle \coloneqq \sum_{i = 0}^\infty x_i y_i,
\)
and topologize it with the induced norm.

\begin{definition}[Linear isometries operad, {\cite[Definition 1.2]{MayEringBook}}]\label{def: linear isometry operad}
    The \emph{linear isometries operad} $\mathcal{L}$ is the operad in $(\Top,\times,\ast)$ with $n$-th space
    \[
        \mathcal{L}(n) = \mathcal{J}((\RR^{\infty})^n,\RR^\infty).
    \]
    The structure maps are given by
    \begin{enumerate}
    	\item for $f\in \mathcal{L}(n)$ and $g_i\in \mathcal{L}(k_i)$ we have
    	 \[
            \gamma(f;g_1,\dots,g_n) 
                = 
            f\circ(g_1\oplus\dots\oplus g_n)\in \mathcal{L}(k_+), \text{ and}
        \]
    	 \item for $\sigma\in \Sigma_n$ and $x_i\in \mathbb{R}^{\infty}$ we have
    	 \[
    	 	(f \cdot \sigma)(x_1,\dots,x_n) = f(x_{\sigma(1)},x_{\sigma^{-1}(2)},\dots,x_{\sigma(n)}).
    	 \]
    \end{enumerate}
    The unit element $\id \in \mathcal{L}(1)$ is the identity map on $\RR^{\infty}$.
\end{definition}

The linear isometries operad is important because it provides a convenient $E_{\infty}$-operad for constructing algebras in model categories of spectra \cite{EKMM}.  The structure of $\mathcal{L}$ leads to the existence of various power operations which can be used for computations in algebraic topology.  For a longer discussion on these operations, see \cite{Lawson:operations}.

\subsubsection*{Little disks operad}
Let $D^k$ be an open unit disk in $\mathbb{R}^k$ and $\mathcal{D}_k(n)$ be the space of $n$-tuples $(f_1,\dots,f_n)$ where the maps $f_i \colon D^k \to D^k$ are orientation-preserving affine embeddings with pairwise disjoint images. The space $\mathcal{D}_k(n)$ has a $\Sigma_n$-action by permuting the tuples. Observe that there is a map $\mathcal{D}_k(n) \to \mathcal{D}_{k+1}(n)$ induced from the equatorial embedding $D^k \hookrightarrow D^{k+1}$.

\begin{definition}[Little disks operad]
    The \textit{little disks operad} $\mathcal{D}$ is the operad in $(\Top, \times, \ast)$ with $n$-th space 
    \[
        \mathcal{D}(n) \coloneqq \colim_k \mathcal{D}_k(n).
    \]
    The unit of this operad is the element of the colimit represented by the identity morphisms $\id \in \mathcal{D}_k(1)$. The operad structure maps come from composition of maps in the expected way. 
\end{definition}

For any values of $k$ and $n$ let $\mathrm{Conf}_{k}(n)$ denote the space of configurations of $n$-points in $\mathbb{R}^k$, topologized as a subspace of $\mathbb{R}^{nk}$.  The map $\mathcal{D}_{k}(n)\to \mathrm{Conf}_k(n)$ which sends a tuples of maps $(f_1,\dots,f_n)\in \mathcal{D}_k(n)$ to $(f_1(0),\dots,f_n(0))$ is a homotopy equivalence.  The map $\mathrm{Conf}_k(n)\to \mathrm{Conf}_{k}(n-1)$ which forgets the last embedded point is a fibration, and an inductive argument using the Serre spectral sequence shows that $\mathrm{Conf}_k(n)$ is $k-2$ connected for any $n$.  Thus, taking the colimit over $k$ we see that $\mathcal{D}(n)$ is contractible for all $n$, hence the little disks operad $\mathcal{D}$ is an $E_\infty$-operad.

\subsubsection*{Steiner operad}

We now turn our attention to the Steiner operad. Let $V$ be a finite dimensional inner product space and let $R_V$ denote the space of distance reducing embeddings $\alpha \colon V \to V$.
A Steiner path is a continuous map $h\colon [0,1] \to R_V$ such that $h(1) = \id_V$. Let $P_V$ be the space of Steiner paths with the compact open topology.
We write $\mathcal{K}_V(j)$ for the subspace of $P_V^j$ consisting of all tuples $(h_1,\dots,h_j)$ such that the images of $h_i(0)$ are pairwise disjoint. This space has an action of the symmetric group $\Sigma_j$ by permutation.

\begin{figure}[hbt!]
\begin{tikzpicture}
	\draw[thick] (0,0) -- (10,0);
	\foreach \x in {0,2,4,...,10}{
		\draw[thick] (\x,0.5) -- (\x,5);
		\draw[thick] (\x-0.1,0.5) -- (\x+0.1,0.5);
		\draw[thick] (\x-0.1,5) -- (\x+0.1,5);
		\draw[thick] (\x,-0.1) -- (\x,0.1) 
			node[below=0.2]{
				\ifthenelse{\x=0}{$0$}{
					\ifthenelse{\x=10}{$1$}{
						$0.\x$
					}
				}
			};
		}
    \fill[red!10]
      (0,1)
        .. controls (0.7,1)   and (1.3,0.75) .. (2,0.75)
        .. controls (2.8,0.75) and (3.3,1.5)  .. (3.95,1.5)
        .. controls (4.8,1.5)  and (5.3,3)    .. (6,3)
        .. controls (6.8,3)    and (7.4,2)    .. (7.95,2)
        .. controls (8.6,2)    and (9.3,0.5)  .. (9.95,0.5)
        -- (9.95,5)
        .. controls (9.3,5)    and (8.6,4.75) .. (7.95,4.75)
        .. controls (7.4,4.75) and (6.8,4.75) .. (6,4.75)
        .. controls (5.3,4.75) and (4.8,4)    .. (3.95,4)
        .. controls (3.3,4)    and (2.8,3.25) .. (2,3.25)
        .. controls (1.3,3.25) and (0.7,3)    .. (0,3)
        -- cycle;

    \fill[blue!15, fill opacity=0.5]
      (0,3.75)
        .. controls (0.7,3.75) and (1.3,3.5)  .. (2,3.5)
        .. controls (2.8,3.5)  and (3.5,3)    .. (4.05,3)
        .. controls (4.8,3)    and (5.3,1.5)  .. (6,1.5)
        .. controls (6.8,1.5)  and (7.5,1)    .. (8.05,1)
        .. controls (8.8,1)    and (9.4,0.5)  .. (10.05,0.5)
        -- (10.05,5)
        .. controls (9.4,5)    and (8.8,3.5)  .. (8.05,3.5)
        .. controls (7.5,3.5)  and (6.8,2.75) .. (6,2.75)
        .. controls (5.3,2.75) and (4.5,4.25) .. (4.05,4.25)
        .. controls (3.4,4.25) and (2.7,4.5)  .. (2,4.5)
        .. controls (1.3,4.5)  and (0.7,4.75) .. (0,4.75)
        -- cycle;
      
	\foreach \x/\ymin/\ymax in {0/1/3, 
								2/0.75/3.25, 
								3.95/1.5/4, 
								6/3/4.75, 
								7.95/2/4.75, 
								9.95/0.5/5}
		{
		\draw[ultra thick,red] (\x,\ymin) -- (\x,\ymax);
		\draw[thick,red] (\x-0.1,\ymin) -- (\x+0.1,\ymin);
		\draw[thick,red] (\x-0.1,\ymax) -- (\x+0.1,\ymax);
		}
	\foreach \x/\ymin/\ymax in {0/3.75/4.75, 
								2/3.5/4.5,
								4.05/3/4.25, 
								6/1.5/2.75, 
								8.05/1/3.5, 
								10.05/0.5/5}
		{
		\draw[ultra thick,blue] (\x,\ymin) -- (\x,\ymax);
		\draw[thick,blue](\x-0.1,\ymin) -- (\x+0.1,\ymin);
		\draw[thick,blue] (\x-0.1,\ymax) -- (\x+0.1,\ymax);
		}
	\node[red] at (-0.5,2) {$h_1(0)$};
	\node[blue] at (-0.5,4.25) {$h_2(0)$};
\end{tikzpicture}
\caption{A heuristic illustration of an element of $\mathcal{K}_V(2)$.}
\end{figure}
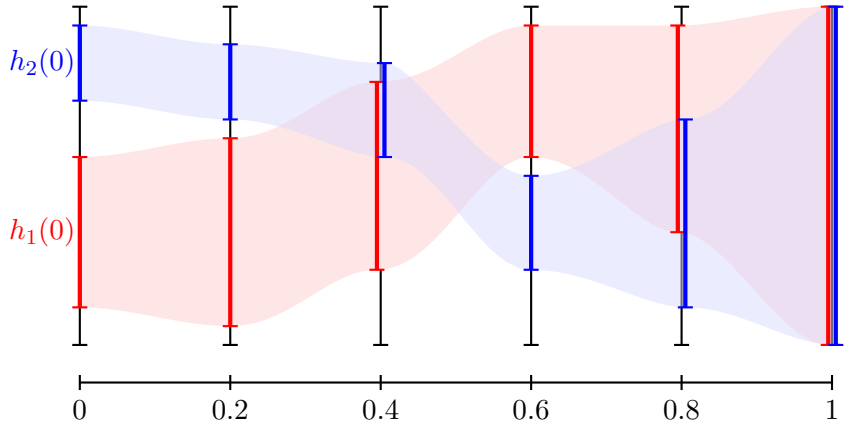

\begin{definition}[Steiner operad, \cite{Steiner}]\label{def: steiner operad}
	The \emph{Steiner operad} is the operad in $(\Top,\times,\ast)$ whose $n$-th space $\mathcal K(n)$ is defined by
    \[\mathcal K(n) \coloneqq \colim\limits_{V\subset \RR^\infty} K_V(n)\]
    where the colimit runs over all finite dimensional subspaces of $\RR^\infty$. 
    The constant Steiner path at the identity embedding in $\mathcal K(1)$ is the identity element. Abusing notation, we denote this constant Steiner path at the identity by $\id \in \mathcal K(1)$. Composition is done pointwise on the paths; given 
	\begin{align*}
		g = (g_1,\ldots,g_n) &\in \mathcal K(n),\\
		f_i = (f_{i,1},\ldots,f_{i,k_i}) &\in \mathcal K(k_i), 
        \quad i = 1,\ldots,n 
	\end{align*}
    the composite $g \circ (f_1,\ldots,f_n)$ is:
	\[
        \big(
            g_1 \circ f_{1,1}, g_1 \circ f_{1,2},\ldots, g_1 \circ f_{1,k_1}, 
            g_2 \circ f_{2,1},                   \ldots, g_2 \circ f_{2,k_2},
                \ldots, 
            g_n \circ f_{n,1},                   \ldots, g_n \circ f_{n,k_n}) 
	\]
    where $g_{s} \circ f_{s,j}$ denotes pointwise composition. 
	In words, we put the pointwise composites $g_s \circ f_{s,j}$ in a tuple ordered lexicographically: first by $s$ then by $j$. 
\end{definition}

The Steiner operad should be viewed as a ``thickening'' of the little disks operad whose elements are pairwise disjoint tuples of distance reducing embeddings $\mathbb{R}^{\infty}\to \mathbb{R}^{\infty}$.  The Steiner operad was introduced in \cite{Steiner} to correct a technical problem which arose in \cite{MayEringBook} when considering pairings between the linear isometries operad and the little disks operad, see \cref{subsec:operad_pairings} for the definition of pairings of operads. 

\subsection{\texorpdfstring{$N_\infty$}{N-infinity}-operads}\label{subsec:background_Ninfinity}
As said before, the linear isometries and Steiner operads are two examples of $E_\infty$-operads. In the presence of a $G$-action, the appropriate generalization of $E_\infty$-operads is the notion of $N_\infty$-operads of Blumberg and Hill \cite{BlumbergHillOperadic}. These objects were created to capture additional structure present when working with operads in $G$-spaces and algebras in $G$-spaces or $G$-spectra.

Let us now illustrate such potential additional structure with an analogous example. Given a finite group $G$ acting on a vector space $V$, we have not only addition maps $V^{\oplus n} \to V$, but also canonical transfer maps $V \to V^G$ given by $v\mapsto \sum_{g\in G} g\cdot v$. Note that the commutativity of addition is necessary for the output to be a $G$-fixed point. In fact, for any chain of subgroups $H\leq K\leq G$ there is a similar map $V^H\to V^K$ obtained by summing over the action by representatives of the $H$-cosets in $K$.  The $N_{\infty}$-operads are meant to encode operations that are commutative up to all higher homotopies and certain transfer maps. Unlike in the example here, these transfers are no longer automatic because commutativity doesn't hold strictly.

\begin{remark}\label{remark:gsigma}
    Let $\cO(n)$ be an operad in $(\Top^G, \times, \ast)$. Since $\Sigma_n$ acts on $\cO(n)$ on the right through $G$-equivariant maps, we can think of $\cO(n)$ as a left $G\times \Sigma_n$-space, where the left $\Sigma_n$-action comes from the right inverse action. That is, for any $(g,\sigma)\in G\times \Sigma_n$ and $f\in\cO(n)$, the action is defined by 
    \[
        (g,\sigma)\cdot f \coloneqq gf\sigma^{-1}.
    \] 
    The identity $\id \in \cO(1)$ is a $G$-fixed point. 
\end{remark}

\begin{definition}[{\cite[Definition 3.7]{BlumbergHillOperadic}}]\label{defn: Ninfinity operads}
    An \emph{$N_{\infty}$-operad} $\cO$ is a reduced operad in $(\Top^G, \times, \ast)$  such that
    \begin{enumerate}
        \item the action of $\Sigma_n$ on $\cO(n)$ is free for all $n\geq 0$;
        \item for all subgroups $\Gamma \leq G \times \Sigma_n$, the fixed points $\cO(n)^\Gamma$ are either empty or contractible;
        \item for all $n\geq 0$, the fixed point space $\cO(n)^G$ is non-empty.
    \end{enumerate}      
\end{definition}

\begin{remark} 
An $N_{\infty}$-operad encodes a commutative (up to coherent homotopies) operation between fixed point spaces of algebras, as we now explain.  For simplicity we let $G=C_2$ be the cyclic group with $2$ elements and let $\cO$ be an $N_{\infty}$-operad.  If $\gamma$ generates $C_2$ and $\sigma$ generates $\Sigma_2$ then let $\Gamma = \{1,(\gamma,\sigma)\}\leq C_2\times \Sigma_2$.  

If $X$ is an $\cO$-algebra in $C_2$-spaces, then there is a $C_2\times \Sigma_2$-equivariant map $\mu_2\colon \cO(2)\times X^2\to X$, where the $\Sigma_2$-action on $X$ is trivial. One can verify there are homeomorphisms $(\cO(2)\times X^2)^{\Gamma}\cong \cO(2)^\Gamma \times X$, and $X^{\Gamma}\cong X^{C_2}$, and thus, taking $\Gamma$-fixed points yields a map
    \[
        \cO(2)^{\Gamma}\times X\to X^{C_2}.
    \]
    If $\cO(2)^{\Gamma}$ is empty this contains no information, but if  $\cO(2)^{\Gamma}$ is contractible then this provides a homotopy coherent ``summing over the orbits'' operation, as described at the beginning of this subsection.

    The construction of operations $X^K\to X^H$ for $K \leq H \leq G$ is similar in the case of a general finite group $G$. Let $n = |H/K|$.  The left action of $H$ on $H/K$ determines a (well-defined up to conjugacy) homomorphism $\alpha\colon H\to \Sigma_{n}$ and we write $\Gamma\leq G\times \Sigma_{n}$ for the subgroup of elements of the form $(h,\alpha(h))$ for any $h\in H$.  The fixed points $\cO(n)^{\Gamma}$ determine whether or not the operad $\cO$ is indexing summing operations $X^K\to X^H$ on any $\cO$-algebra $X$. For a more detailed discussion see \cite[\S 7]{BlumbergHillOperadic}.
\end{remark}

\begin{remark} 
    Let $\cO$ be an $N_{\infty}$-operad.  For any $n$, suppose that $\Gamma\leq G\times \Sigma_n$ is any subgroup containing an element of the form $(1,\sigma)$ with $\sigma\neq 1$.  Since $\cO(n)$ has a free $\Sigma_n$-action, it follows that $\cO(n)^\Gamma=\emptyset$, as no element is fixed by $(1,\sigma)$.  Thus, condition (b) in \cref{defn: Ninfinity operads} need only be checked for subgroups $\Gamma\leq G\times \Sigma_n$ such that $\Gamma\cap (1\times \Sigma_n) = \{(1,1)\}$.  One can check that every such subgroup is the set of elements of the form $(k,\alpha(k))$ for some $K\leq G$ and group homomorphism $\alpha\colon K\to \Sigma_n$. 
\end{remark}

This justifies the following definition.

\begin{definition}\label{def:graph_subgroup}
        Let $H$ be a subgroup of $G$, and $X$ be an $H$-set of size $n$. Without loss of generality, we may assume $X = \underline{n}$. Denote by $\varphi$ the induced group homomorphism $\varphi\colon H\to \Sigma_n$. The \emph{graph subgroup} of $X$ is
\[
    \Gamma(X) = \{(h,\varphi(h))\in G\times \Sigma_n\mid h\in H\}.
\]
\end{definition}

\begin{remark}
    Let $\cO$ be an $N_{\infty}$-operad. 
    For any $n\geq 0$ and $H\leq G$ we have $\cO(n)^G\leq \cO(n)^H$, 
    so $\cO(n)^H$ is non-empty and therefore must be contractible.  
    It is straightforward to check that the collection $\cO^H(n) \coloneqq \cO(n)^H$ 
    forms an operad in $\Top$ in which all constituent spaces are contractible and 
    $\Sigma_n$-free. 
    In particular, these operads are all $E_{\infty}$. 
    So if $X$ is an $\cO$-algebra, then $X^H$ is an $E_\infty$ space for each $H \leq G$.
\end{remark}

The $N_\infty$-operads that arise most often in equivariant algebraic topology are equivariant versions of the linear isometries operad, the little disks operad, and the Steiner operad. For all of these definitions we need the notion of a $G$-universe. 

\begin{definition}
   A \emph{$G$-universe} $U$ is a countably infinite-dimensional orthogonal real $G$-representation 
   containing the trivial $1$-dimensional real $G$-representation, 
   such that if $V \subset U$ is a finite-dimensional subrepresentation 
   then $U$ contains infinitely many copies of $V$. 
\end{definition}

\begin{example}
    A \textit{complete $G$-universe} $U$ is any $G$-universe isomorphic to an infinite direct sum of copies of the regular representation $\rho$, that is, 
    \(
        U \cong \bigoplus_{n = 0}^\infty \rho. 
    \)  
    This universe is complete in the sense that every finite-dimensional irreducible $G$-representation appears infinitely often as a summand.
\end{example}

\begin{example}
    Let $G = C_2$, the cyclic group of order $2$. There are two irreducible real $C_2$-representations: the trivial representation $\RR$ and the sign representation $\sigma$. There are two $C_2$-universes: the trivial universe $\RR^\infty$ and the complete universe $(\RR \oplus \sigma)^\infty$. 
\end{example}

\subsubsection*{Equivariant linear isometries} 
Equivariant generalization is simple once we have the language of universes. 

\begin{definition}
    Let $G$ be a finite group and let $U$ be a $G$-universe. 
 The \textit{$G$-equivariant linear isometries operad} $\mathcal{L}_G(U)$ 
 is the $G$-operad in $(\Top^G, \times, \ast)$ with $n$-th space 
    \[
        \mathcal{L}_G(U)(n) \coloneqq \mathcal{J}(U^{\oplus n}, U), 
    \]
    with $G$-action given by $g \cdot f = gfg^{-1}$ and $\Sigma_n$-action given by permuting the inputs. Composition maps and the identity element are as in the (non-equivariant) linear isometries operad (\cref{def: linear isometry operad}). 
\end{definition}

The equivariant linear isometries operad is 
an $N_\infty$-operad by~\cite[Lemma 3.15]{BlumbergHillOperadic}. 

\subsubsection*{Equivariant little disks} 
If $V$ is a finite-dimensional orthogonal $G$-representation, let $D(V) \coloneqq \{v \in V \mid \|v\| < 1\}$ denote the open unit disk in $V$.
Let $\mathcal{D}_V(n)$ be the space of $n$-tuples $(f_1,\dots,f_n)$ 
where the maps $f_i \colon D(V) \to D(V)$ are affine (not necessarily equivariant) 
injective maps with pairwise disjoint images. The space $\mathcal{D}_V(n)$ has 
a $G$-action by conjugation on each component map, namely $g \cdot f_i = gf_ig^{-1}$, 
and a $\Sigma_n$-action by permuting the tuples. 

If $V \subseteq W$, there are maps $\varphi_n\colon \mathcal{D}_V(n) \to \mathcal{D}_W(n)$ 
defined as follows.  For any $w\in \mathcal{D}(W)$, there is a unique decomposition 
$w = v+x$ where $v\in D(V)\subseteq D(W)$ and $x\in V^{\perp}$ and let $\varphi_n(f)(w) = f(v)+x$. 
It is straightforward to check that $\varphi_n(f)$ is an affine embedding $D(W)\to D(W)$ 
and $\varphi_1$ preserves the identity map. 

\begin{definition}[Equivariant little disks]
    Let $U$ be a $G$-universe. The \textit{equivariant little disks operad} $\mathcal{D}(U)$ is the operad in $(\Top^G, \times, \ast)$ with $n$-th space 
    \[
        \mathcal{D}(U)(n) \coloneqq \colim \mathcal{D}_V(n),
    \]
    where the colimit is taken over all finite-dimensional subrepresentations $V \subset U$.

    The unit of this operad is the element of the colimit represented by the identity morphisms $\id \in \mathcal{D}_V(1)$. The operad structure maps come from composition of maps in the expected way. 
\end{definition}

The equivariant little disks operad $\mathcal{D}(U)$ is an $N_\infty$-operad for all choices of universe $U$ by~\cite[Corollary 3.14]{BlumbergHillOperadic}. 

\subsubsection*{Equivariant Steiner operad}
Let $U$ be a $G$-universe and $V\subset U$ a finite dimensional subrepresentation. Let $R_V$ denote the space of (non-equivariant) distance reducing embeddings $\alpha \colon V \to V$ which are. Note that $R_V$ is a $G$-space with $G$-action given by conjugation: $g \cdot \alpha = g\alpha g^{-1}$.

Recall that a Steiner path is a continuous map $h \colon [0,1] \to R_V$ such that $h(1) = \id_V$. Let $P_V$ denote the space of all Steiner paths; this is again a $G$-space with $G$-action coming from the $G$-action on $R_V$. 

As before, we write $\mathcal{K}_V(j)$ for the subspace of $P_V^j$ consisting of all tuples $(h_1, \ldots, h_j)$ such that the images of $h_j(0)$ are disjoint. This space has a $G$-action coming from the $G$-action on $P_V$ and a $\Sigma_j$-action by permuting tuples. 

\begin{definition}[Equivariant Steiner operad]
    The \emph{equivariant Steiner operad} associated to the $G$-universe $U$ is the operad in $(\Top^G, \times, \ast)$ whose $n$-th space $\mathcal{K}_U(n)$ is defined by
    \[ \mathcal{K}_U(n)\coloneqq \colim\limits_{V \subset U} K_V(n)\]
    where the colimit runs over all finite dimensional subrepresentation of $U$.
    The identity element $\id \in \mathcal{K}_U(1)$ and the composition are defined as in the (non-equivariant) Steiner operad \ref{def: steiner operad}.
\end{definition}

The equivariant Steiner operad is an $N_\infty$-operad 
by~\cite[Corollary 3.14]{BlumbergHillOperadic}. 

\begin{remark}
    The equivariant Steiner operad $\mathcal{K}(U)$ is equivariantly equivalent to the equivariant little disks operad $\mathcal{D}(U)$, just as with their non-equivariant versions, see~\cite[Proposition 3.13]{BlumbergHillOperadic}. Classically, the (non-equivariant) linear isometries operad $\mathcal{L}$ and the (non-equivariant) little disks operad $\mathcal{D}$ are also homotopy equivalent. But for all but three finite groups, there are universes $U$ such that $\mathcal{L}(U)$ and $\mathcal{D}(U)$ are inequivalent, see~\cite[Theorem 4.22]{BlumbergHillOperadic}. 
\end{remark}

\section{Indexing and transfer systems}\label{sec:indexing_transfer_systems}

Indexing and transfer systems\textemdash first introduced in \cite{BlumbergHillOperadic} and \cite{BalchinBarnesRoitzheim,RubinCombinatorial}, respectively\textemdash are combinatorial gadgets that encode homotopy types of $N_\infty$-operads. More concretely, as we recall below in \cref{theorem: indexing systems are operads}, 
the homotopy category of $N_\infty$-operads, the poset of indexing systems, and the poset of transfer systems, are all equivalent. 
These equivalences allow us to pass from homotopy theory to combinatorics, where the classification of transfer systems yields a classification of homotopy types of $N_\infty$-operads. This interplay has revealed many new and interesting connections between equivariant homotopy theory and combinatorics, see \cite{BlumbergHillOsornoOrmsbyRoitzheim} for a thorough introduction to the subject. 

Although indexing and transfer systems for a group $G$ are in bijection (\cref{thm: indexing and transfer systems are the same}) there are often advantages to working with one over the other. Transfer systems consist of a finite amount of data, making them well suited to combinatorial arguments. On the other hand, sometimes it is important to work with finite $G$-sets instead of subgroups of $G$, and in these situations indexing systems are more natural.

\subsection{Indexing Systems}
First, we fix some notation. For $G$ a finite group with subgroups $K\leq H\leq G$, there is an adjunction
\[
\begin{tikzcd}[column sep=huge]
	{\Set^K} & {\Set^H}
	\arrow[""{name=0, anchor=center, inner sep=0}, "{\Ind^H_K}", bend left=30, from=1-1, to=1-2]
	\arrow[""{name=1, anchor=center, inner sep=0}, "{\Res^H_K}", bend left=30, from=1-2, to=1-1]
	\arrow["\dashv"{anchor=center, rotate=-90}, draw=none, from=0, to=1]
\end{tikzcd}
\]
with left adjoint (\emph{induction})
\[
    \Ind^H_K(T) \coloneqq H\times_KT
\]
and right adjoint (\emph{restriction}) $\Res^H_K$ given by restricting the $H$-action to a $K$-action.  
    For any $g \in G$, the \textit{conjugation functor} at $g$ is the functor
    \[
        c_{g,H}\colon \Set^H\to \Set^{gHg^{-1}}
    \]
    that sends an $H$-set $X$ to the $gHg^{-1}$-set $gX \coloneqq \{gx\mid x\in X\}$.

\begin{notation} 
    We write $\Fin^G$ for the full subcategory of $\Set^G$ spanned by the finite $G$-sets.
\end{notation}

\begin{definition}[{\cite[Definition 3.22]{BlumbergHillOperadic}}]\label{definition:indexing_systems}
    An \emph{indexing system} $\cI$ for a finite group $G$ consists of a collection $\{\cI(H)\}_{H\leq G}$ 
   of full subcategories $\cI(H)\subseteq \Fin^H$ satisfying the following. 
   \begin{enumerate}
       \item For all $H\leq G$ the category $\cI(H)$ is closed under isomorphisms, disjoint unions, subobjects, and finite products. 
       \item The restriction and conjugation functors $\Res^H_K$ and $c_{g,H}$ restrict to functors between the categories $\{\cI(H)\}_{H\leq G}$. 
       \item (closure under self induction) If $H/K$ is in $\cI(H)$ and $X \in \cI(K)$ then $\Ind^H_K(X)\in \cI(H)$. 
   \end{enumerate}
\end{definition}

	The collection of indexing systems for a group $G$ forms a poset where $\cI_1\leq \cI_2$ if and only if $\cI_1(H)\subseteq \cI_2(H)$ for all $H$. We denote this poset by $\mathrm{Index}(G)$.

\begin{example}\label{example: example of indexing systems 1}
	Every non-trivial finite group $G$ admits at least two indexing systems:
	\begin{enumerate}
		\item\label{indexing system 1} the \emph{complete indexing system} $\mathcal{I}_c$ with $\mathcal{I}_c(H) = \Fin^H$ for all $H$, and
		\item the \emph{trivial indexing system} $\cI_0$, where $\cI_0(H)$ is the full subcategory of $\Fin^H$ consisting of trivial $H$-sets.  
	\end{enumerate}
	These indexing systems are, respectively, the maximal and minimal elements in the poset of indexing systems for any finite group $G$.
\end{example}

\begin{example}\label{example: example of indexing systems 2}
	Let $G = C_{p^2}$, the cyclic group with $p^2$ elements for some prime $p$.  We define
	\begin{align*}
		\cI(C_{p^2}) &=\langle C_{p^2}/e, C_{p^2}/C_{p^2}\rangle\\
		\cI(C_{p}) &=\Fin^{C_p}\\
		\cI(e) &= \Fin
	\end{align*}
	where the notation $\langle C_{p^2}/e, C_{p^2}/C_{p^2}\rangle$ should be read as the full subcategory of $\Fin^{C_{p^2}}$ which is generated under disjoint unions by the orbits $C_{p^2}/e$ and $C_{p^2}/C_{p^2}$. Explicitly, this consist of all $C_{p^2}$-sets which have no elements with stabilizer exactly $C_{p}$.  We leave it to the reader to carefully check this is an indexing system, and that it is distinct from both the complete indexing system and the trivial one.
\end{example}

\begin{remark}
    For any subgroup $H \le G$, each category $\cI(H)$ is non-empty: it contains all finite sets with trivial $H$-action. Indeed, any such set $X$ can be expressed as a finite disjoint union of singletons, so closure under disjoint unions reduces the problem to the case of a singleton. But a singleton is the terminal object in $\Fin^H$, and hence belongs to $\cI(H)$ by closure under finite products.
\end{remark}

Condition \ref{indexing system 1} in the definition of an indexing system $\cI$ is fairly restrictive.  Since every finite $H$-set is the union of its orbits, and $\cI(H)$ is closed under disjoint unions and subobjects, we see that $\cI(H)$ is entirely determined by which transitive $H$-sets it contains.  All such $H$-sets have the form $H/K$ for some subgroup $K\leq H$.  This gives a tractable way of writing down an indexing system, since we need only specify a collection of transitive $H$-sets for all $H$ and then check that the induction, restriction, and conjugation functors behave correctly on this finite set of objects.

\subsection{Transfer systems}
The above observation naturally leads to the notion of transfer systems, which record\textemdash via relations among subgroups\textemdash which such orbits are allowed under self-induction, while respecting conjugation and restriction.

\begin{definition}
    A \emph{transfer system} for a group $G$ is a poset relation $\to$ on $\mathrm{Sub}(G)$ satisfying the following conditions. 
    \begin{enumerate}
        \item It refines inclusion: if $K\to H$ then $K\leq H$. 
        \item It is closed under conjugation: if $K\to H$ then $gKg^{-1}\to gHg^{-1}$ for all $g\in G$. 
        \item It is closed under restriction: if $L\leq G$ and $K\to H$ then $K\cap L\to H\cap L$. 
    \end{enumerate}
\end{definition}

The collection of transfer systems is a poset ordered by refinement. Transfer systems provide a convenient and tractable way to encode the data of an indexing system using relations among subgroups of $G$.  In particular, we have the following. 

\begin{theorem}[{\cite{BalchinBarnesRoitzheim,RubinDetecting}}]\label{thm: indexing and transfer systems are the same}
There is a poset isomorphism between the poset of indexing systems and the poset of transfer systems for a finite group G.
\end{theorem}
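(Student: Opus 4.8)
The plan is to exhibit mutually inverse, order-preserving maps between $\mathrm{Index}(G)$ and the poset of transfer systems, relying on the observation recorded above that each level $\cI(H)$ of an indexing system is determined by the transitive $H$-sets it contains.

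\textbf{Indexing systems to transfer systems.} Given $\cI$, I would define $K \to_\cI H$ to mean $K \le H$ and $H/K \in \cI(H)$. Refinement of inclusion is built in (which also forces antisymmetry), and reflexivity $K \to_\cI K$ holds because $K/K$ is the terminal singleton, which lies in every $\cI(K)$. Closure under conjugation and restriction transfer directly from axiom (b) of an indexing system applied to the single orbit $H/K$. The only subtle point is transitivity: if $L \to_\cI K$ and $K \to_\cI H$, then $K/L \in \cI(K)$ and $H/K \in \cI(H)$, so the self-induction axiom (c) gives $\Ind^H_K(K/L) = H/L \in \cI(H)$, i.e.\ $L \to_\cI H$. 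Thus $\to_\cI$ is a transfer system.

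\textbf{Transfer systems to indexing systems.} Conversely, given a transfer system $\to$, I would let $\cI_\to(H)$ be the full subcategory of $\Fin^H$ spanned by those finite $H$-sets whose orbit stabilizers $K$ all satisfy $K \to H$. Closure under isomorphism, disjoint union, and subobjects is immediate, since each operation only reshuffles or discards orbits. The main obstacle is closure under finite products, and this is exactly where the restriction axiom of a transfer system earns its keep: using the double-coset decomposition
\[
    H/K \times H/K' \;\cong\; \coprod_{[g]\in K\backslash H/K'} H\big/\big(K \cap gK'g^{-1}\big),
\]
I would show each stabilizer relates to $H$ by combining all three axioms---conjugation gives $gK'g^{-1} \to H$, restricting $K \to H$ along $gK'g^{-1}$ gives $K \cap gK'g^{-1} \to gK'g^{-1}$, and transitivity closes the gap. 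Compatibility with the restriction and conjugation functors is checked the same way via the double-coset formula, and the self-induction axiom (c) reduces, via $\Ind^H_K(K/L) = H/L$, to the transitivity chain $L \to K \to H$.

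\textbf{Inverseness and monotonicity.} It remains to verify the two assignments are mutually inverse and order-preserving. Passing $\to$ to $\cI_\to$ and back, $K \to_{\cI_\to} H$ iff $H/K \in \cI_\to(H)$ iff $K \to H$, since $H/K$ has the single stabilizer $K$; hence $\to_{\cI_\to} = \to$. Passing $\cI$ to $\to_\cI$ and back, the subcategories $\cI_{\to_\cI}(H)$ and $\cI(H)$ contain the same transitive objects by construction, and since each is generated from its orbits under disjoint union and subobjects, they coincide. Finally both maps are monotone: enlarging an indexing system adds orbits and therefore relations, and enlarging a transfer system enlarges each $\cI_\to(H)$. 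Together these give the claimed poset isomorphism. I expect the product-closure verification to be the crux; everything else is a direct unwinding of definitions.
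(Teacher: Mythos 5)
Your proof is correct and uses exactly the correspondence the paper records after the theorem statement ($K \to H$ iff $H/K \in \cI(H)$ in one direction, the smallest indexing system containing the orbits $H/K$ with $K \to H$ in the other); the paper itself supplies no proof, citing \cite{BalchinBarnesRoitzheim,RubinDetecting}, and your verification fills in the standard details, with the product-closure argument via the double-coset decomposition being the genuine crux, just as you predict. One small slip worth noting: $H/K$ does not have ``the single stabilizer $K$'' --- its point stabilizers form the $H$-conjugacy class of $K$ --- but since transfer systems are closed under conjugation, $K \to H$ holds iff $hKh^{-1} \to H$ for every $h \in H$, so your inverseness check goes through unchanged (and similarly, closure of $\to_\cI$ under restriction uses subobject closure in $\cI(H \cap L)$ in addition to axiom (b), since $\Res^H_{H\cap L}(H/K)$ is generally not a single orbit).
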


Under this correspondence, an indexing system $\cI$ determines a transfer system where $K\to H$ if and only if $H/K\in \cI(H)$.  Conversely, given a transfer system $\to$ we build an indexing system by declaring it to be the smallest indexing system which contains every orbit $H/K$ for $K\to H$.

\begin{example}\label{example:basic_transfer_systems}Every non-trivial finite group $G$ admits at least two transfer systems:
	\begin{enumerate}
		\item the \emph{complete transfer system} $\to_c$ given by the subset relation, and 
		\item the \emph{trivial transfer system} $\to_0$, which is equality; that is, $K\to_0 H$ if and only if $K=H$.  
	\end{enumerate}
	These two transfer systems correspond, under \cref{thm: indexing and transfer systems are the same}, to the indexing systems of \cref{example: example of indexing systems 1}
\end{example}

\begin{example}
    Let $G = C_{p^2}$, the cyclic group with $p^2$ elements for some prime $p$.  There is a transfer system give by the poset relation given by all the equalities, $e\to C_{p^2}$, and $e\to C_p$.  We can represent this transfer system pictorially as follows. 
    \[
    	\begin{tikzpicture}
				\node (1) at (0,0) {$e$};
				\node (Cp) at (0,1) {\footnotesize$C_p$};
				\node (Cp2) at (0,2) {\footnotesize$C_{p^2}$};
				\draw[->, thick] (1) -- (Cp);
				\draw[->, thick] (1) edge[bend right=1.5cm] (Cp2);
		\end{tikzpicture}
    \]
    This transfer system corresponds, under \cref{thm: indexing and transfer systems are the same}, to the indexing system of \cref{example: example of indexing systems 2}.
\end{example}

\begin{example}[{\cite[Example 4.9]{RubinDetecting}}]
    The $\Sigma_3$-transfer systems 
    \[
        \sthreetriv
        \qquad
        \sthreeab
        \qquad
        \sthreec
        \qquad
        \sthreeall
    \]
    are the $\Sigma_3$-transfer systems that are realized by equivariant Steiner operads. Here, $H_1, H_2, H_3$ are the three conjugate copies of $C_2$ inside $\Sigma_3$.  
\end{example}

\begin{remark}
    \label{remark saturated transfer systems}
    The class of transfer systems satisfying the two-out-of-three property is called \textit{saturated}. These transfer systems are of particular interest because of their relation to linear isometries operads. In fact, the saturated transfer systems can also be characterized as those transfer systems which are self-compatible in the sense of \cref{def compatible pair transfer system}, see \cite[Corollary 7.77]{BlumbergHillBiincomplete}. 
\end{remark}

For the purposes of this paper, we adopt the perspective of indexing systems as the primary framework, while keeping the correspondence with transfer systems in mind.

\subsection{\texorpdfstring{$N_{\infty}$}{N-infinity}-operads and indexing systems}
The relationship between $N_{\infty}$-operads and indexing systems is established by considering the fixed points of $\mathcal{O}(n)$ with respect to the graph subgroups of $G\times \Sigma_n$, as in \cref{def:graph_subgroup}. 

\begin{definition}
    Let $\cO$ be an $N_\infty$-operad. We say that an $H$-set $X$ is \emph{$\cO$-admissible} if the fixed point space $\cO(n)^{\Gamma(X)}$ is contractible for all $n \geq 0$. 
\end{definition}

Note that while the graph subgroup $\Gamma(X)$ depends on the choice of bijection $X \cong \underline{n}$, any two choices will define conjugate subgroups in $G\times \Sigma_n$ and thus the fixed points $\cO(n)^{\Gamma(X)}$ do not depend on this choice, at least up to homeomorphism.

\begin{proposition}[{\cite[Section 4]{BlumbergHillOperadic}}]\label{proposition: admissible sets are indexing system}
    Let $\cO$ be an $N_{\infty}$-operad and let $\cI_{\cO}(H)\subset \Set^H$ denote the full subcategory of $\cO$-admissible $H$-sets.  Then the collection $\{\cI_{\cO}(H)\}$ is an indexing system.
\end{proposition}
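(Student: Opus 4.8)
The plan is to exploit condition (b) of \cref{defn: Ninfinity operads}: for any subgroup $\Gamma \le G\times\Sigma_n$ the space $\cO(n)^{\Gamma}$ is \emph{either empty or contractible}. Since every graph subgroup $\Gamma(X)$ of \cref{def:graph_subgroup} is of this form, an $H$-set $X$ is $\cO$-admissible precisely when $\cO(\lvert X\rvert)^{\Gamma(X)}$ is non-empty. Thus every closure property of an indexing system reduces to a statement of the shape: given non-empty graph fixed-point spaces for some input $H$-sets, produce an explicit point of the graph fixed-point space for the output $H$-set. The two mechanisms for producing such points are, first, containments and conjugacies among graph subgroups, which are transparent from \cref{def:graph_subgroup}, and second, the operad structure maps $\gamma$ together with the two symmetry axioms of \cref{def:reduced-operad} and the non-empty $G$-fixed spaces $\cO(k)^G$ guaranteed by condition (c).

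I would first dispatch the conditions that are purely group-theoretic. Closure under isomorphism follows because an isomorphism $X\cong X'$ of $H$-sets conjugates $\Gamma(X)$ to $\Gamma(X')$ by an element of $1\times\Sigma_n$, and the $\Sigma_n$-action is by homeomorphisms. For restriction along $K\le H$ one observes $\Gamma(\Res^H_K X)=\Gamma(X)\cap(K\times\Sigma_n)\le\Gamma(X)$, so $\cO(n)^{\Gamma(X)}\subseteq\cO(n)^{\Gamma(\Res^H_K X)}$ and non-emptiness is inherited downward. For conjugation by $g$ one checks $\Gamma(gX)=(g,1)\,\Gamma(X)\,(g,1)^{-1}$, and left multiplication by $g$ on $\cO(n)$ is a homeomorphism carrying one fixed-point space onto the other. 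Each of these preserves non-emptiness, hence admissibility.

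Next I would treat the set-theoretic operations, which genuinely use $\gamma$. For a disjoint union $X\amalg Y$ with $\lvert X\rvert=m$, $\lvert Y\rvert=n$, I realize it as the target of $\gamma_{2,m,n}$: choosing $a\in\cO(2)^G$ and witnesses $b\in\cO(m)^{\Gamma(X)}$, $c\in\cO(n)^{\Gamma(Y)}$, the element $\gamma(a;b,c)$ is fixed by $\Gamma(X\amalg Y)$, the key computation using $G$-equivariance of $\gamma$ together with the inner-slot symmetry axiom to absorb the block permutation $\varphi(h)\oplus\psi(h)$; iterating handles arbitrary finite disjoint unions. For a subobject $X'\subseteq X$ with complement $X''$, I would collapse $X''$: after choosing the bijection so that $X'$ is an initial block, feeding $\id\in\cO(1)$ into the $X'$-slots and $\ast\in\cO(0)$ into the $X''$-slots of $\gamma$ carries a witness in $\cO(\lvert X\rvert)^{\Gamma(X)}$ to a point of $\cO(\lvert X'\rvert)^{\Gamma(X')}$, the verification relying on the outer-slot symmetry axiom and the fact that $\id$ and $\ast$ are $G$-fixed. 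For a product $X\times Y$, I would apply $\gamma_{m,n,\dots,n}$ with a single witness $b\in\cO(m)^{\Gamma(X)}$ on the outside and the \emph{same} witness $c\in\cO(n)^{\Gamma(Y)}$ in every inner slot; both symmetry axioms then show $\gamma(b;c,\dots,c)$ is fixed by $\Gamma(X\times Y)$, since the product of the block permutation $\varphi_X(h)_+(n,\dots,n)$ with the block sum $\varphi_Y(h)^{\oplus m}$ is exactly the diagonal-action permutation on $\underline m\times\underline n$. The terminal object (a trivial one-point $H$-set) is admissible because $\id\in\cO(1)$ is $G$-fixed, so empty products are covered as well.

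The main obstacle is closure under self-induction, where the twisting by cosets and the symmetric-group bookkeeping are most delicate. Suppose $H/K$ is admissible with witness $b\in\cO(m)^{\Gamma(H/K)}$, where $m=[H:K]$, and $X$ is an admissible $K$-set with witness $c\in\cO(\ell)^{\Gamma(X)}$. Fixing coset representatives $h_1,\dots,h_m$, I would set $c_i:=h_i\cdot c$ and propose $d:=\gamma(b;c_1,\dots,c_m)$ as a witness for $\Ind^H_K X$. The crux is the cocycle identity $h h_i=h_{\varphi(h)(i)}\,k_{h,i}$ with $k_{h,i}\in K$, which yields $h\cdot c_i=c_{\varphi(h)(i)}\,\psi(k_{h,i})$ from $k_{h,i}c=c\,\psi(k_{h,i})$; applying $G$-equivariance and then both symmetry axioms rewrites $h\cdot d$ as $d$ multiplied by the product of the block permutation $\varphi(h)_+(\ell,\dots,\ell)$ and the block sum $\bigoplus_i\psi(k_{h,i})$. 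The remaining, and most error-prone, task is to confirm that this product of permutations equals the homomorphism $H\to\Sigma_{m\ell}$ describing the $H$-action on $\Ind^H_K X\cong\underline m\times\underline\ell$; this requires fixing the right-action composition conventions carefully so that the twists $\psi(k_{h,i})$ land in the correct blocks after the outer permutation is applied. Granting this identity, $\cO(m\ell)^{\Gamma(\Ind^H_K X)}$ is non-empty, and the verification that $\{\cI_{\cO}(H)\}$ is an indexing system is complete.
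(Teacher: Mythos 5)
Your proof is correct, and it follows the same strategy as the source: the paper does not prove this proposition itself but cites \cite[Section 4]{BlumbergHillOperadic}, and your argument reproduces that proof's two mechanisms exactly \textemdash{} the purely group-theoretic closure properties via containment and conjugacy of graph subgroups, and the remaining closure properties via explicit fixed points assembled from the structure maps $\gamma$, the two symmetry axioms, and the non-empty $G$-fixed spaces. Your self-induction step is moreover the precise $\gamma$-analogue of the paper's own proof of \cref{prop:operad-action-induces-compatible-pair}: your witness $d=\gamma(b;h_1\cdot c,\dots,h_m\cdot c)$ with cocycle $hh_i=h_{\varphi(h)(i)}k_{h,i}$ mirrors the element $z=\lambda(a;b_1,\dots,b_n)$ with $b_i=(h_i,e)\cdot b$ and \cref{eq:def-of-kih}. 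The one identity you leave ``granted'' does hold, and closes exactly as in \cref{lemma: explicit action for coinduced sets} with coinduction replaced by induction: under the lexicographic identification $\Ind^H_K X\cong\underline{m}\times\underline{\ell}$ one has $h\cdot[h_i,x]=[h_{\varphi(h)(i)},k_{h,i}x]$, so the action homomorphism sends $h$ to $\varphi(h)_+(\ell,\dots,\ell)\circ\bigl(\psi(k_{h,1})\oplus\dots\oplus\psi(k_{h,m})\bigr)$, and this matches the product your two symmetry-axiom applications produce, since the outer axiom is exactly what re-indexes the block sum by $\varphi(h)^{-1}$. In the product case there is no convention issue at all: because the blocks have equal size $n$ and all inner permutations equal $\varphi_Y(h)$, the block permutation $\varphi_X(h)_+(n,\dots,n)$ and the block sum $\varphi_Y(h)^{\oplus m}$ commute, and their product is the diagonal-action permutation as you claim.
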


Up to a suitable notion of weak equivalence, $N_{\infty}$-operads are classified by their associated indexing system.  Explicitly, we say that a map of $N_{\infty}$-operads $f : \cO_1\to \cO_2$ is a weak equivalence if for all $n$ the map $f_n : \cO_1(n)\to \cO_2(n)$ gives a weak homotopy equivalence after taking fixed points with respect to all subgroups of $G\times \Sigma_n$.    The \emph{homotopy category of $N_{\infty}$-operads}, which we denote by $\mathrm{Ho}(N_{\infty})$, is the categorical localization of the category of $N_{\infty}$-operads at the collection of weak equivalences. 

\begin{theorem}[{\cite{BlumbergHillOperadic,BonventrePereira,GutierrezWhite,RubinCombinatorial}}]\label{theorem: indexing systems are operads}
    The assignment $\cO\mapsto \cI_{\cO}$ descends to an equivalence of categories $\mathrm{Ho}(N_{\infty}) \simeq \mathrm{Index}(G)$. 
\end{theorem}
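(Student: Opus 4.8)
The plan is to verify the three conditions for an equivalence of categories—that the assignment descends to $\mathrm{Ho}(N_\infty)$, that it is fully faithful, and that it is essentially surjective—exploiting throughout the observation that each constituent space $\cO(n)$ of an $N_\infty$-operad is a universal space for a family of subgroups of $G\times\Sigma_n$. First I would check that $\cO\mapsto \cI_\cO$ is well defined on the homotopy category. That the target is an indexing system is \cref{proposition: admissible sets are indexing system}, so it remains to see that weakly equivalent operads have the same associated indexing system. If $f\colon \cO_1\to\cO_2$ is a weak equivalence, then for every graph subgroup $\Gamma(X)$ the induced map $\cO_1(n)^{\Gamma(X)}\to\cO_2(n)^{\Gamma(X)}$ is a weak homotopy equivalence; since both spaces are empty or contractible by \cref{defn: Ninfinity operads} and a weak equivalence is a bijection on $\pi_0$, one fixed-point space is contractible exactly when the other is. Hence $X$ is $\cO_1$-admissible if and only if it is $\cO_2$-admissible, so $\cI_{\cO_1}=\cI_{\cO_2}$.

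Next I would record the universal-space characterization underlying fully faithfulness. The $N_\infty$ axioms say precisely that $\cO(n)$ is a model for the universal space $E\mathcal{F}_n(\cO)$ of the family $\mathcal{F}_n(\cO)$ of those $\Gamma\leq G\times\Sigma_n$ with $\cO(n)^{\Gamma}\neq\emptyset$: freeness of the $\Sigma_n$-action forces every such $\Gamma$ to be a graph subgroup, $\mathcal{F}_n(\cO)$ is closed under subgroups and conjugation, and the empty-or-contractible dichotomy is exactly the defining property of $E\mathcal{F}_n$. Since $\mathrm{Index}(G)$ is a poset, fully faithfulness reduces to showing there is a morphism $\cO_1\to\cO_2$ in $\mathrm{Ho}(N_\infty)$ if and only if $\cI_{\cO_1}\leq\cI_{\cO_2}$, and that it is then unique. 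The forward implication is immediate: any map restricts on fixed points to $\cO_1(n)^{\Gamma(X)}\to\cO_2(n)^{\Gamma(X)}$, so nonemptiness of the source forces nonemptiness, hence contractibility, of the target, giving $\cI_{\cO_1}\leq\cI_{\cO_2}$. For the converse, the containment $\mathcal{F}_n(\cO_1)\subseteq\mathcal{F}_n(\cO_2)$ yields, by the universal property of universal family spaces, a levelwise $G\times\Sigma_n$-equivariant map, unique up to equivariant homotopy.

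The genuinely delicate step is to promote these levelwise maps to an honest map of operads compatible with the structure maps $\gamma$, and to see the derived space of such maps is contractible. Here I would pass to a cofibrant model $\widetilde{\cO}_1$ of $\cO_1$ in a model structure on $G$-operads and build a map $\widetilde{\cO}_1\to\cO_2$ by induction on arity, extending over the equivariant cells of $\widetilde{\cO}_1(n)$ while respecting the operadic compositions at each stage; all obstructions vanish because the relevant fixed-point spaces of $\cO_2$ are contractible, and the same contractibility forces uniqueness up to homotopy. This rectification, executed carefully in \cite{BlumbergHillOperadic}, is the main obstacle I anticipate, precisely because the structure maps must be respected simultaneously across all arities rather than one level at a time. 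Finally, essential surjectivity is the assertion that every indexing system is realized by some $N_\infty$-operad; this is the resolution of the Blumberg–Hill realization conjecture, established independently in \cite{BonventrePereira,GutierrezWhite,RubinCombinatorial}, and I would invoke one of these constructions—for instance Rubin's combinatorial models, whose $n$-th space is a universal space for the family determined by $\cI$—to complete the equivalence.
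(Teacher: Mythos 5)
The paper offers no proof of this statement: it is recalled as background and attributed wholesale to \cite{BlumbergHillOperadic,BonventrePereira,GutierrezWhite,RubinCombinatorial}, so there is no internal argument to compare against. Your outline correctly reconstructs the standard proof from exactly those sources---descent to the homotopy category via the empty-or-contractible dichotomy on graph-subgroup fixed points, full faithfulness via the identification of each $\cO(n)$ as a universal space for its family of graph subgroups together with the cofibrant-replacement/obstruction argument of \cite{BlumbergHillOperadic} to rectify levelwise maps into operad maps, and essential surjectivity via the realization theorems of \cite{BonventrePereira,GutierrezWhite,RubinCombinatorial}---and it is sound at the level of detail given.
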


\section{Pairings of operads and compatiblity of indexing systems}

It is often the case that algebraic structures have two different operations\textemdash one additive and one multiplicative\textemdash that are compatible in a manner analogous to addition and multiplication in a ring. A key piece of this compatibility is a version of the  distributive law. Here, we are concerned with the compatibility of two operads which encode multiplicative and additive structure, and compatibility of the indexing systems that correspond to a pair of such $N_\infty$-operads. The archetypal example of this is the distributive law of multiplication over addition in $\mathbb{N}$; in order to discuss compatibility of operads, we first describe a categorification of this ordinary distributive law in \cref{subsection:distributivity bijection}. In what remains of this section, we recall a ``distributive law" for operads from \cite{MayEringBook} (\cref{subsec:operad_pairings}) and for indexing systems from \cite{BlumbergHillBiincomplete} (\cref{subsec:pairs_indexing_systems}). 

\subsection{Categorifying distributivity} 
\label{subsection:distributivity bijection}

To discuss operad pairings, we must first introduce a categorification of the distributive property of multiplication over addition and set some notation. 

Let $k, j_1,\dots j_k\geq 0$. Given natural numbers $i_{r,q}$ with $1\leq r \leq k$ and $1\leq q \leq j_r$, the most general form of the distributive law is an equality 
\begin{equation}
\label{equation:distributitivity}
    \prod_{r=1}^k \sum_{q=1}^{j_r} i_{r,q} 
    = 
    \sum_{\bm{q}} \prod_{r=1}^k i_{r,q_r},
\end{equation}
where the sum on the right is taken over all tuples $\bm{q} = (q_1, \ldots, q_k) \in \underline{j_1} \times\cdots \times  \underline{j_k}$, ordered lexicographically. 

Given two totally ordered sets $A$ and $B$, we order their disjoint union $A\amalg B$ so that $a \leq b$ for all $a \in A$ and all $b \in B$, while retaining the internal orderings on $A$ and $B$. With this convention, \cref{equation:distributitivity} categorifies into a bijection
\[
    \nu \colon \prod_{r=1}^k \underline{i_{r,+}}
    \xlongrightarrow{\cong}
    \coprod_{\bm{q}}
    \prod_{r=1}^k \underline{i_{r,q_r}}, 
\]
where $i_{r,+} = \sum_{q=1}^{j_r} i_{r,q}$. 

The abstract existence of such a bijection is not enough for our purposes; our goal is to make $\nu$ explicit. The example below illustrates the general strategy. 

\begin{example}
\label{ex:3x5=15}
     Basic arithmetic gives us
\[
    (2+3)\times (1+2+3) = (2\times 1)+(2\times 2)+(2\times 3)+(3\times 1)+(3\times 2)+(3\times 3)
\]
and this defines an order-preserving bijection between the two sets
\[
    \nu\colon \underline{5}\times \underline{6} \xrightarrow{\cong} (\underline{2}\times \underline{1}) \amalg (\underline{2}\times \underline{2}) \amalg (\underline{2}\times \underline{3}) \amalg (\underline{3}\times \underline{1}) \amalg (\underline{3}\times \underline{2}) \amalg (\underline{3}\times \underline{3})
\]
which, for instance, sends the element $(5,2)\in \underline{5}\times \underline{6}$ to the element $\nu(5,2) = (3,1)\in \underline{3}\times \underline{2}$. 
\end{example}

To make $\nu$ explicit, let $\bm{\ell} = (\ell_1, \ldots, \ell_k) \in \prod_{r=1}^k \underline{i_{r,+}}$ with $\ell_r \in \underline{i_{r,+}}$. For each $r$, 
there is a unique $\gamma_r(\bm{\ell})$ such that $1 \leq \gamma_r(\bm{\ell}) \leq j_r$ and 
\[
    \sum\limits_{q=1}^{\gamma_r(\bm{\ell})-1} i_{r,q}
    < \ell_r \leq 
    \sum\limits_{q=1}^{\gamma_r(\bm{\ell})} i_{r,q}.
\]
In other words, $\gamma_r(\bm{\ell})$ is the integer such that $\ell_r$ is coming from the $i_{r,\gamma_r(\bm{\ell})}$ block of the decomposition
\[
    \underline{i_{r,+}}
    \cong 
    \coprod\limits_{q=1}^{j_r}\underline{i_{r,q}}.
\]
 Define $\beta_r(\bm{\ell})$ by
\[
    \beta_r(\bm{\ell}) \coloneqq \left(\ell_r - \sum\limits_{q=1}^{\gamma_r(\bm{\ell})-1}i_{r,q}\right)\in \underline{i_{r,\gamma_r(\bm{\ell})}}.
\]
The integers $\beta_r(\bm{\ell})$ tell us the element $\ell_r$ corresponds to in $\underline{i_{r,\gamma(\ell_r)}}$. Finally, let $\bm{q}(\bm{\ell}) = (\gamma_1(\bm{\ell}),\dots,\gamma_k(\bm{\ell}))\in \underline{j_1}\times\dots\times \underline{j_k}$. The bijection $\nu$ is then given by
\[  
    \nu(\bm{\ell}) = 
    (\beta_1(\bm{\ell}),\dots,\beta_k(\bm{\ell}))
    \in \prod\limits_{r=1}^k \underline{i_{r,\gamma_r(\bm{\ell})}}
    \xhookrightarrow{\bm{q} = \bm{q}(\bm{\ell})} \coprod\limits_{\bm{q} \in \underline{j_1} \times\cdots \times  \underline{j_k}}\ 
    \prod\limits_{r=1}^k \underline{i_{r,q_r}}. 
\]

\begin{example}
    In \cref{ex:3x5=15} above, we have $\bm{\ell} = (5,2)$ so that $\ell_1=5$ and $\ell_2=2$.  We have $\gamma_1(\bm{\ell}) = 2 $ and $\gamma_2(\bm{\ell}) = 2$ so $\bm{q}(\bm{\ell})= (2,2)$.  The sequences $\bm{q}$ are ordered according to the lexicographical ordering on $\underline{2}\times \underline{3}$ so this is the fifth component of the disjoint union. 
 We have $\beta_1(\bm{\ell}) = 3$ and $\beta_2(\bm{\ell}) = 1$ so $\nu(5,2) = (3,1)\in \underline{3}\times \underline{2}$ as claimed.
\end{example}

\subsection{Pairings of operads}\label{subsec:operad_pairings}
While operads index operations on a space $X$, it is often the case that $X$ might have two distinct flavors of operation.  By analogy with algebra, it is helpful to think about one of these operations as a multiplication and the other as an addition, so that $X$ can be thought of as a ring up to homotopy.  To keep track of this data using operads, we need to use one operad $\cP$ for the multiplication and another $\cQ$ for the addition. Just like in algebra, where the multiplication and addition are related by the distributive law, these two operads are related by a specified structure which is called a \emph{pairing}.  

In this section we recall May's definition of a pairing of operads in \cite{MayEringPaper} and give the fundamental examples of the linear isometries and Steiner operads.  In the next section give a definition of compatible pairs of indexing systems. The relationship between pairings of $G$-operads and compatible pairs of indexing systems occupies the final three sections of the paper.

Before we discuss operad pairings, we first introduce some notation, as well as recall the notation $k_+$ from \cref{notation:block permutation}. 

\begin{notation} 
\label{notation:permutation in blocks}
Let $k_1, \ldots, k_n$ be non-negative integers and let $k_\times = \prod_{i=1}^n k_i$. We identify the finite set $\underline{k_\times}$ with $\underline{k_1} \times  \cdots \times \underline{k_n}$ via the order-preserving bijection that imposes the  lexicographical ordering on tuples $\bm{q} \in \underline{k_1} \times \cdots \times \underline{k_n}$.
	\begin{enumerate}[(a)]
		\item Let $\sigma \in \Sigma_n$. We write 
		\[
			\sigma_\times(k_1, \dots, k_n) \in \Sigma_{k_\times}
		\]
		for the permutation obtained by writing 
		\(
			\underline{k_\times} \cong \underline{k_1} \times  \cdots \times \underline{k_n}
		\) 
        via the order-preserving bijection described above
		and permuting the factors in the product according to $\sigma$.

		\item Let $\tau_i \in \Sigma_{k_i}$. We write 
		\[
			\tau_1 \otimes  \dots \otimes \tau_n \in \Sigma_{k_\times}
		\]
		for the permutation of $\underline{k_\times}$ obtained by writing
		\(
			\underline{k_\times} \cong \underline{k_1} \times  \dots \times \underline{k_n}
		\) 
        via the order-preserving bijection described above
		and acting on $\underline{k_i}$ with $\tau_i$. 
	\end{enumerate}
\end{notation}

Fix two reduced operads $\cP$ and $\cQ$.  We write $\mathrm{id}_{\cP}$ and $\id_{\cQ}$ for the identity elements in  $\cP(1)$ and $\cQ(1)$, respectively. 
The reader should keep in mind that we intend for $\cP$ to act as a multiplicative operad and $\cQ$ to acts as an additive operad.  We will write
\[
	\mu\colon \cP(k)\times \cP(i_1)\times\dots\times \cP(i_k)\to \cP(i_+)
\]
and 
\[
    \alpha\colon \cQ(k)\times \cQ(i_1)\times\dots\times \cQ(i_k)\to \cQ(i_+)
\]
for the corresponding operad structure maps.  We write $p$, with various subscripts, for a generic element in the operad $\cP$. We will write $x$, with various subscripts, for a generic element in the operad $\cQ$. In particular, given $k\geq 0$, $j_r\geq 0 $ for all $1\leq r \leq k$, and $i_{r,q}\geq 0 $ for all $1\leq r \leq k$ and $1\leq q \leq j_r$, let
\begin{align*}
  p       &\in \cP(k)       &x       &\in \cQ(j)         \\
  p_r     &\in \cP(j_r)     &x_r     &\in \cQ(j_r)       \\
          &                 &x_{r,q} &\in \cQ(i_{r,q})
\end{align*} 
where $1\leq r\leq k$, $1\leq q\leq j_r$, and $i_{r,q}$ are some non-negative integers for all $1\leq r\leq k$ and all $1\leq q\leq j_r$. To simplify notation in what is to come, write 
\[
    x_{J_r} = (x_{r,1},\ldots,x_{r,j_r}) \in \cQ(i_{r,1}) \times \cdots \times \cQ(i_{r,j_r}). 
\]
and 
\[
    x_{\bm{q}} = (x_{1,q_1},x_{2,q_2},\dots, x_{k,q_k})\in \cQ(i_{1,q_1})\times \dots\times \cQ(i_{k,q_k}),
\]
where $\bm{q} = (q_1,\ldots,q_k) \in \underline{j_1} \times \cdots \times \underline{j_k}$.  
	
\begin{definition}[{\cite[Definition VI.1.6]{MayEringBook}}]
\label{def:operad_pairing} 
    An \emph{action of $\cP$ on $\cQ$} is a collection of maps
	\[
        \lambda\colon \cP(k)\times \cQ(j_1)\times\dots\times \cQ(j_k)\to \cQ(j_\times)
	\]
    satisfying the following conditions:
      \begin{enumerate}[label=(\alph*)]
      \setlength{\itemsep}{8pt}
          \item\label{def:operad_pairing_first_condition} $\lambda(\mu(p;p_1,\dots,p_k);x_{J_1},\dots,x_{J_k}) = \lambda(p;\lambda(p_1;x_{J_1}),\dots,\lambda(p_k; x_{J_k}))$, 
          \item\label{def:operad_pairing_second_condition} $\alpha(\lambda(p;x_1,\dots, x_k); \prod\limits_{\bm{q}} \lambda(p;x_{\bm{q}}))\circ \nu = \lambda(p; \alpha(x_1; x_{J_1}),\dots,\alpha(x_k;x_{J_k}))$, where $\nu$ is the \textit{distributivity bijection} of \cref{subsection:distributivity bijection}, 
          \item\label{action-id-P} $\lambda(\mathrm{id}_{\cP};x) =x$,
          \item $\lambda(p;\mathrm{id}_{\cQ},\dots,\mathrm{id}_{\cQ}) =\mathrm{id}_{\cQ}$,
          \item\label{action-sigma} $\lambda(p \cdot \sigma; x_1,\dots,x_k) = \lambda(p;x_{\sigma^{-1}(1)},\dots,x_{\sigma^{-1}(k)})\cdot \sigma_{\times}(j_1,\dots,j_k)$,
          \item $\lambda(p; x_1 \cdot \tau_1,\dots, x_k \cdot \tau_k) = \lambda(p;x_1,\dots,x_k)\cdot (\tau_1\otimes \dots \otimes \tau_k)$,
          \item $\lambda(\ast;)=\id_{\cQ}$ when $k=0$,
          \item\label{action-when-xi-ast} $\lambda(p;x_1,\dots, x_k)=\ast$ if for any $i$, $x_i=\ast \in \cQ(0)$\footnote{Note that because our operads are reduced, this last condition is superfluous.}.
      \end{enumerate}
When there is an action of $\cP$ on $\cQ$, we say that the triple $(\cP,\cQ, \lambda)$ is a  \emph{pairing of operads} or an \emph{operad pairing}, and we use the terms interchangeably. Moreover, when there is no ambiguity, we will use simply $(\cP,\cQ)$. 
\end{definition}

We can represent the equations \ref{def:operad_pairing_first_condition} and  \ref{def:operad_pairing_second_condition} diagramatically as follows:

\begin{equation}
    \tag*{\ref{def:operad_pairing_first_condition}}
\begin{tikzcd}
	{\cP(k)\times\prod\limits_{r=1}^k\left(\cP(j_r)\times\prod\limits_{q=1}^{j_r}\cQ(i_{r,q})\right)} & {\cP(k)\times\prod\limits_{r=1}^k\cQ(i_{r,\times})} \\
	{\left(\cP(k)\times\prod\limits_{r=1}^k\cP(j_r)\right)\times\prod\limits_{r=1}^k\prod\limits_{q=1}^{j_r}\cQ(i_{r,q})} \\
	{\cP(j_+)\times\prod\limits_{r=1}^k\prod\limits_{q=1}^{j_r}\cQ(i_{r,q})} & {\cQ(n)}
	\arrow["{1\times (\prod \lambda)}", from=1-1, to=1-2]
	\arrow["\cong"', from=1-1, to=2-1]
	\arrow["\lambda", from=1-2, to=3-2]
	\arrow["{\mu\times 1}"', from=2-1, to=3-1]
	\arrow["\lambda"', from=3-1, to=3-2]
\end{tikzcd}
\end{equation} 

\begin{equation}
    \tag*{\ref{def:operad_pairing_second_condition}}
\begin{tikzcd}
	\displaystyle{\cP(k)\times \prod_{r=1}^k \left(\cQ(j_r)\times\prod_{q=1}^{j_r}\cQ(i_{r,q}) \right)} & {\cP(k)\times \prod\limits_{r=1}^k \cQ(i_{r,+})} \\
	{\cP(k)^{j_\times+1}\times \prod\limits_{r=1}^k \left(\cQ(j_r)\times\prod\limits_{q=1}^{j_r}\cQ(i_{r,q})^{j_{\widehat{r}}} \right)} & {\cQ(\prod\limits_{r=1}^{k}i_{r,+})}\\
	{\left(\cP(k)\times \prod\limits_{r=1}^k \cQ(j_r)\right) \times\left(\prod\limits_{\bm{q}}\cP(k)\times\prod\limits_{r=1}^{k}\cQ(i_{r,q_r})\right)} & {\cQ(\prod\limits_{r=1}^{k}i_{r,+})} \\
	{\cQ(j_\times)\times\prod\limits_{\bm{q}}\cQ(\prod\limits_{r=1}^ki_{r,q_r})} & {\cQ(\sum\limits_{\bm{q}}\prod\limits_{r=1}^{k}i_{r,q_r})} 
	\arrow["{1\times\left(\prod\alpha\right)}", from=1-1, to=1-2]
	\arrow["{\Delta\times (\prod(1\times \left(\prod \Delta\right)))}"', from=1-1, to=2-1]
	\arrow["\lambda", from=1-2, to=2-2]
	\arrow["\cong"', from=2-1, to=3-1]
	\arrow["{\lambda\times \left(\prod \lambda\right)}", from=3-1, to=4-1]
	\arrow["\alpha"', from=4-1, to=4-2]
	\arrow["{=}"', from=4-2, to=3-2]
	\arrow["\nu"', from=3-2, to=2-2]
\end{tikzcd}
\end{equation}
where $n = \prod_{r=1}^k\prod_{q= 1}^{j_r}i_{r,q}$ 
and $j_{\widehat{r}} = j_1\cdots \widehat{j}_{r}\cdots j_k$. 

\begin{remark}\label{rema:operad-construction-linear-steiner}
    As mentioned in the introduction, while all (non-equivariant) $E_{\infty}$-operads are homotopy equivalent, constructions of pairings depend quite heavily on the specific operads being chosen. For instance, while there is a canonical pairing of the linear isometries and Steiner operads, it is not known whether or not the Steiner operad admits a pairing with itself.
\end{remark}

\begin{remark}
    While we will not explore this idea further in this paper, we pause to explain the relationship between a pairing of operads $(\cP,\cQ)$ and the algebras over $\cP$ and $\cQ$.  To make this discussion precise, recall that any operad $\cO$ in spaces determines an associated monad $\mathbb{O}$ on the category of spaces so that $\cO$-algebras are the same thing as $\mathbb{O}$-algebras.  It is shown in \cite[\S 4, Appendix A]{MayEringPaper} that the data of a pairing $(\cP,\cQ)$ is equivalent to the statement that the monad $\mathbb{P}$ on the category of spaces lifts to a monad on the category of $\mathbb{Q}$-algebras.  This amounts to checking that the structure maps of the monad $\mathbb{P}$ can be made maps of $\mathbb{Q}$-algebras when necessary, and this is the data of the pairing map. This perspective is in line with the view of the operad pairing as encoding a distributive law: as May notes, the relationship between the monads $\mathbb{P}$ and $\mathbb{Q}$ is exactly that of a distributive law between monads as defined by Beck \cite{Beck}.
\end{remark}

We recall an example of an operad pairing between the linear isometries operad and the Steiner operad that will be essential for us.  In \cref{cor:linearisometries_Steiner_pair_intersection_monoids} we provide a new proof of this fact that sheds light into the construction of the pairing maps.

\begin{proposition}[{\cite[Section 3]{MayEringPaper} and \cite{Steiner}}]
\label{prop:operad_pairing_linearisometries_Steiner} 
There is an action of the linear isometries operad $\cL$ on the Steiner operad $\cK$. 
\end{proposition}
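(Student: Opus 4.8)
The plan is to write the action maps $\lambda$ down explicitly by ``conjugating'' the Steiner data through the linear isometry, following May and Steiner, and then to verify the eight axioms of \cref{def:operad_pairing} directly from the formula. Fix $p\in\cL(k)=\cJ((\RR^\infty)^k,\RR^\infty)$ and elements $x_r=(x_{r,1},\dots,x_{r,j_r})\in\cK(j_r)$ for $1\le r\le k$, where each $x_{r,q}$ is a Steiner path of distance-reducing embeddings of $\RR^\infty$. Write $W=p((\RR^\infty)^k)$ for the image of the isometry and $W^\perp$ for its orthogonal complement. For each multi-index $\bm q=(q_1,\dots,q_k)\in\underline{j_1}\times\cdots\times\underline{j_k}$, ordered lexicographically, I would define a Steiner path $D_{\bm q}$ by
\[
    D_{\bm q}(t)\big|_{W} = p\circ\bigl(x_{1,q_1}(t)\oplus\cdots\oplus x_{k,q_k}(t)\bigr)\circ p^{-1},
    \qquad
    D_{\bm q}(t)\big|_{W^\perp}=\id,
\]
and set $\lambda(p;x_1,\dots,x_k)\coloneqq(D_{\bm q})_{\bm q}$, the $j_\times$-tuple indexed by the lexicographic order on multi-indices.

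The second step is to check that this tuple is a genuine element of $\cK(j_\times)$. Each $D_{\bm q}(t)$ is a distance-reducing embedding: the direct sum of distance-reducing embeddings is distance-reducing on $(\RR^\infty)^k$, conjugation by the isometry $p$ preserves this on $W$, and extending by the identity on $W^\perp$ preserves it on all of $\RR^\infty$ by the Pythagorean identity; injectivity follows since the $x_{r,q_r}(t)$ are injective and $W\perp W^\perp$. Because $x_{r,q_r}(1)=\id$ for all $r$ we get $D_{\bm q}(1)=\id$, so $D_{\bm q}$ is a Steiner path. The essential point is disjointness of images at $t=0$: if $\bm q\neq\bm q'$ they differ in some coordinate $r$, and disjointness inside $x_r\in\cK(j_r)$ gives $\im(x_{r,q_r}(0))\cap\im(x_{r,q'_r}(0))=\varnothing$, so the boxes $\prod_s\im(x_{s,q_s}(0))$ and $\prod_s\im(x_{s,q'_s}(0))$ are disjoint in $(\RR^\infty)^k$ and, since $p$ is injective, so are their images in $W$. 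A point common to $\im(D_{\bm q}(0))=p(\mathrm{box}_{\bm q})+W^\perp$ and $\im(D_{\bm q'}(0))=p(\mathrm{box}_{\bm q'})+W^\perp$ would force two distinct points of $W$ to differ by an element of $W^\perp$, impossible as $W\cap W^\perp=\{0\}$. Finally I would check that $\lambda$ is continuous and compatible with the colimits defining $\cK$: for finite-dimensional $V$ the subspace $p(V^k)$ is again finite-dimensional, the construction commutes with the structure maps $K_V\to K_{V'}$, and $p\mapsto(W,\,p^{-1}\circ\mathrm{proj}_W)$ varies continuously, so $\lambda$ descends to a continuous map on the colimits.

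The third step is to verify axioms \ref{def:operad_pairing_first_condition}--\ref{action-when-xi-ast}. The unit conditions \ref{action-id-P} and (d) are immediate: for $p=\id_{\cL}$ one has $W=\RR^\infty$ and $D_{\bm q}=x$, while feeding in the constant identity paths returns the constant identity path. The reducedness conditions (g)--(h) are automatic, as noted in the definition. The symmetry conditions \ref{action-sigma} and (f) follow by tracking how permuting the $\cL$-inputs relabels the direct summands and permutes the boxes indexed by $\bm q$, producing exactly the block permutation $\sigma_\times(j_1,\dots,j_k)$ of \cref{notation:permutation in blocks}, while permuting inside each $x_r$ produces $\tau_1\otimes\cdots\otimes\tau_k$. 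The associativity condition \ref{def:operad_pairing_first_condition} reduces to the observation that conjugating through $\mu(p;p_1,\dots,p_k)=p\circ(p_1\oplus\cdots\oplus p_k)$ agrees with conjugating first through each $p_r$ and then through $p$, which is immediate from functoriality of $\oplus$ and of $(-)\mapsto p\circ(-)\circ p^{-1}$.

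I expect the main obstacle to be the distributivity axiom \ref{def:operad_pairing_second_condition}. Here the additive structure map $\alpha$ of the Steiner operad is pointwise composition of Steiner paths, and one must check that assembling the grid of conjugated embeddings and then composing matches composing first and then conjugating, after reindexing by the distributivity bijection $\nu$ of \cref{subsection:distributivity bijection}. Carrying this out amounts to verifying that the lexicographic identification $\underline{j_\times}\cong\prod_r\underline{j_r}$ used to index the output is precisely the one through which $\nu$ reorganizes the iterated sums and products, and that the orthogonal-complement bookkeeping is transparent to this composition; this transparency is exactly why the Steiner operad, rather than the little disks operad, is the correct target. The only other delicate point is the continuity claim of the second step, since the orthogonal projection onto $W=p((\RR^\infty)^k)$ must be shown to depend continuously on $p$ in the relevant topology.
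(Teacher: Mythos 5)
Your construction is exactly the paper's: the paper's proof sketch defines $f\ast\bm{e}$ by the same formula (conjugate $x_{1,q_1}(t)\oplus\cdots\oplus x_{k,q_k}(t)$ through $p$ on $\im(p)$, extend by the identity on $\im(p)^\perp$) and likewise indexes the output $j_\times$-tuple by the lexicographic order on $\bm{q}$, so your proposal is correct and takes essentially the same approach. Your verifications of the Steiner-path property, disjointness at $t=0$, and the pairing axioms go beyond the paper, which stops at the construction and asserts that it defines a pairing, but they fill in the same argument rather than taking a different route.
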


\begin{proof}[Proof sketch.] 
We outline the construction of the action of the linear isometries operad on the Steiner operad. Suppose we have an element $f\in \cL(n)$, and an $n$-tuple $\bm{e}=\left(e_1, \ldots, e_n\right)$, where each $e_i\colon\RR^\infty\to \RR^\infty$ is a linear embedding. We can construct a new embedding, which we will denote by $f\ast \bm{e}$, as the composite
\begin{center}
\begin{tikzcd}
	\RR^\infty\arrow[r,"="] & \im(f)\oplus \im(f)^\perp\ar[d,"f^{-1}\oplus \id"] & {} & \im(f)\oplus \im(f)^\perp \arrow[r,"="] & \RR^\infty.\\ 
	{} & (\RR^\infty)^n\oplus \im(f)^\perp\arrow[rr,"(\bigoplus_{i=1}^n e_i)\oplus \id"] & {} & (\RR^\infty)^n\oplus \im(f)^\perp \arrow[u,"f\oplus \id"] & {}
\end{tikzcd}
\end{center}

Note that if the $e_i$ are distance reducing for all $i\in \underline{n}$, 
then so is the map $f\ast \bm{e}$, and we can extend this action to 
Steiner paths in the natural way. We can get an action 
\[ 
	\lambda\colon \cL(n)\times\cK(j_1)\times\cdots\times\cK(j_n)\longrightarrow \cK(j_{\times})
\]
by sending $(f;h_1, \dots h_n)$, where each $h_i$ is itself a $j_i$-tuple of Steiner paths, to the $j_1\cdots j_n$-tuple given by $f\ast (h_{1,q_1},\dots , h_{n,q_n})$, where $(q_1,\dots,q_n) \in \underline{j_1} \times \cdots \times \underline{j_n}$ is ordered lexicographically. This defines an operad pairing. 
\end{proof}

\subsection{Pairings of indexing systems}\label{subsec:pairs_indexing_systems}
Given a pairing $(\cP, \cQ)$ of $N_\infty$-operads, it is reasonable to ask what additional structure is present on the associated indexing systems. In this section, we recall the definition of compatible pairs of indexing systems following \cite{BlumbergHillBiincomplete}. A comparison of operad pairings and compatible pairs of indexing systems is given in \cref{section: operads to indexing systems}. 

\begin{definition}
\label{def: indexing system pair}
	A pair of $G$-indexing systems $(\cI_{m},\cI_a)$ is a \emph{compatible pair} if for all subgroups $K\leq H\leq G$ such that $H/K\in \cI_m(H)$ and all $X\in \cI_{a}(K)$ we have that the coinduced $H$-set $\Set^K(H,X)$ is in $\cI_{a}(H)$.
\end{definition} 

The first author gives an equivalent formulation of compatibility of $N_\infty$-data in the form of a compatible pair of transfer systems \cite[Definition 4.6]{ChanBiincomplete}, that we recall below. 

\begin{definition} 
\label{def compatible pair transfer system}
A pair $(\mathcal{T}_1,\mathcal{T}_2)$ of transfer systems is a \emph{compatible pair} if $\mathcal{T}_1\leq \mathcal{T}_2$ and for all subgroups $H,K,L \leq G$ such that $K \leq H$ and $L \leq H$, 
if $K \to H$ in $\mathcal{T}_1$ and $K \cap L \to K$ in $\mathcal{T}_2$, then $L \to H$ in $\mathcal{T}_2$. In other words, given the solid arrows in the diagram below, with each arrow in the indicated transfer system, there exists a dashed arrow as below.
    \[
        \begin{tikzcd}
                & 
            H 
                \\
            K
                \ar[ur, "\mathcal{T}_1"] 
                & 
                & 
            L 
                \ar[ul, dashed, "\mathcal{T}_2"']
                \\
                &   
            K \cap L 
                \ar[ul,"\mathcal{T}_2"]
                \ar[ur, "\mathcal{T}_1"']
        \end{tikzcd}
    \]
\end{definition}

\begin{remark}
	In \cite{BlumbergHillBiincomplete}, Blumberg--Hill actually define compatible pairs of \emph{indexing categories}, which are an equivalent formulation of the data of indexing systems.  Unwinding the equivalence of indexing systems and indexing categories, one can see that the definition above is indeed equivalent to the data of a compatible pair of indexing categories given in \cite[Theorem 7.65]{BlumbergHillBiincomplete}.
\end{remark}

\begin{remark}
    Blumberg--Hill's original interest in compatible pairs of indexing categories is related to their work on the algebraic structures known as \emph{bi-incomplete Tambara functors}. These serve as analogs of commutative rings in equivariant homotopy theory.  Explicitly, a multiplicative equivariant cohomology theory is naturally valued in (graded) bi-incomplete Tambara functors. We will not give the full definition here, as we will not use bi-incomplete Tambara functor explicitly in this paper, but refer the reader to \cite{BlumbergHillBiincomplete} for a lengthier discussion. 
\end{remark}

The definition of compatible pairs of indexing systems  can be understood algebraically through the following example.  For any group $H$ let $A(H)$ denote the Burnside ring of $H$. The elements of $H$ are virtual finite $H$-sets, addition is induced by taking disjoint unions of $H$-sets, and multiplication is induced by taking Cartesian products.  If $K\leq H$ then coinduction $\Set^K(H,-)\colon \Set^K\to \Set^H$ induces a function $\mathrm{nm}_K^H\colon A(K)\to A(H)$ which is multiplicative, though not additive.
	
If $\cI$ is a $G$-indexing system and $H\leq G$ one might also consider the \emph{partial Burnside ring} $A_{\cI}(H)\subset A(H)$ which is generated by those $H$-sets belonging to $\cI(H)$. If $K\leq H\leq G$, it is not true in general that the operations $\mathrm{nm}_K^H$ give functions on the partial Burnside rings, since their image might not be contained in $A_{\cI}(K)$. The compatibility condition on a pair $(\cI_m,\cI_a)$ precisely guarantees that $\mathrm{nm}_K^H\colon A_{\cI}(K)\to A_{\cI}(H)$ is well defined for all $H/K\in \cI_m(H)$.
We end this section with some examples and a non-example.

\begin{example}
	If $\cI_{a}$ is the complete indexing system where $\cI_{a}(H) \coloneqq \Fin^H$ for all $H$, then $(\cI_{m},\cI_a)$ is compatible for any $\cI_m$.  Indeed, $\Set^K(H,X)\in \cI_a(H)$ for any $X\in \Set^H$ and any $K \leq H$.
\end{example}

\begin{example}
	If $\cI_m$ is the trivial indexing system, where $\cI_m(H)$ consists of only trivial $H$-sets, then $(\cI_m,\cI_a)$ is compatible for any choice of $\cI_a$. Here the condition $H/K\in \cI_m(H)$ only holds when $H=K$, in which case for any choice of $X\in \cI_a(H)$ we have $\Set^H(H,X)\cong X\in \cI_a(H)$.
\end{example}

\begin{example}[Non-example]
	Let $G = C_{4}$ and let $\cI_a = \cI_{m} = \cI$ be the indexing system from \cref{example: example of indexing systems 2} in the case $p=2$.  We claim that the pair $(\cI,\cI)$ is \emph{not} compatible.  To see this, note that $C_{4}/e\in \cI(C_{4})$ and $\underline{2}\in \cI(e)$, but 
	\[
		\Set^e(C_{4},\underline{2})\cong  (C_4/C_4)^{\amalg 2}\amalg (C_4/C_2)\amalg (C_4/e)^{\amalg 3} 	
	\]
	which is not in $\cI(C_4)$ since there is an orbit isomorphic to $C_4/C_2$.
\end{example}

\section{Compatible indexing systems from pairings of \texorpdfstring{$N_\infty$}{N-infinity}-operads}
\label{section: operads to indexing systems}

As we have explained, our goal is to understand the relation between pairings of $N_\infty$-operads and the corresponding pair of associated indexing systems. The goal of this section is to establish a precise connection in one direction. Specifically, this section consists of a proof of the following.

\begin{theorem}
\label{prop:operad-action-induces-compatible-pair}
 Let $\cP$ and $\cQ$ be $N_\infty$-operads with indexing systems $\cI_\cP$ and $\cI_\cQ$, respectively. If there is an action of $\cP$ on $\cQ$, then $(\cI_\cP,\cI_\cQ)$ is a compatible pair of indexing systems.
\end{theorem}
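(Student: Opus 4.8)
The plan is to unwind the definition of compatibility (\cref{def: indexing system pair}) and exhibit an explicit fixed point in the relevant space of $\cQ$ produced by the pairing map $\lambda$. Fix subgroups $K\le H\le G$ with $H/K\in\cI_\cP(H)$ and an $H$-set arising as $X\in\cI_\cQ(K)$, and set $n=[H:K]$ and $m=|X|$. I must show that the coinduced set $\Set^K(H,X)$ lies in $\cI_\cQ(H)$, i.e.\ is $\cQ$-admissible. Since $\cP$ and $\cQ$ are $N_\infty$ (\cref{defn: Ninfinity operads}), every fixed-point space $\cO(\ell)^\Gamma$ is either empty or contractible, so it suffices to produce a single point of $\cQ(m^n)^{\Gamma(\Set^K(H,X))}$. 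The arithmetic is arranged precisely so that this is the target of the pairing: taking $k=n$ and $j_1=\dots=j_n=m$ gives $j_\times=m^n=|\Set^K(H,X)|$, so $\lambda$ specializes to a map $\lambda\colon\cP(n)\times\cQ(m)^n\to\cQ(m^n)$.

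First I would make the coinduced action explicit. Choosing coset representatives $H=\coprod_{i=1}^n a_iK$ identifies $\Set^K(H,X)\cong X^n$, and for $h\in H$ one writes $ha_i=a_{\rho_h(i)}e_i$ with $\rho_h=\varphi_{H/K}(h)$ and $e_i\in K$. Under the lexicographic identification $\underline{m}^n\cong\underline{m^n}$ of \cref{notation:permutation in blocks}, the resulting homomorphism $\varphi\colon H\to\Sigma_{m^n}$ is the wreath-product element that permutes the $n$ blocks by $\rho_h$ and twists the $i$-th block by $\varphi_X(e_i)$. This is exactly the data that the equivariance axioms of $\lambda$ are designed to see.

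The candidate point is
\[
    \Lambda \coloneqq \lambda\big(p;\, a_1\cdot q,\, a_2\cdot q,\, \ldots,\, a_n\cdot q\big)\in\cQ(m^n),
\]
where $p\in\cP(n)^{\Gamma(H/K)}$ and $q\in\cQ(m)^{\Gamma(X)}$ (both nonempty by hypothesis) and $a_i\cdot q$ denotes the $G$-action of $a_i\in H\le G$ from \cref{remark:gsigma}. The essential trick is to feed in the $G$-translates $a_i\cdot q$ rather than $q$ in every slot; this is what lets the single $K$-fixed point $q$ accommodate the various block-twists $e_i\in K$. To verify fixedness I would compute, using $G$-equivariance of $\lambda$, that $h\cdot\Lambda=\lambda\big(h\cdot p;\,(ha_i)\cdot q\big)_i$, then substitute $h\cdot p=p\cdot\rho_h$ (from $\Gamma(H/K)$-fixedness of $p$) and $(ha_i)\cdot q=(a_{\rho_h(i)}\cdot q)\cdot\varphi_X(e_i)$ (from $\Gamma(X)$-fixedness of $q$, valid because $e_i\in K$). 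Applying axiom (e) of \cref{def:operad_pairing} converts the block permutation $\rho_h$ into $(\rho_h)_\times(m,\dots,m)$, and applying axiom (f) converts the within-block twists into $\varphi_X(e_{\rho_h^{-1}(1)})\otimes\cdots\otimes\varphi_X(e_{\rho_h^{-1}(n)})$, giving $h\cdot\Lambda=\Lambda\cdot w_h$ for an explicit $w_h\in\Sigma_{m^n}$. Comparing $w_h$ with the homomorphism $\varphi$ computed above shows $w_h=\varphi(h)$, so $\Lambda$ is fixed by $\Gamma(\Set^K(H,X))$ and the required fixed-point space is nonempty.

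The main obstacle is not conceptual but combinatorial: matching the action homomorphism of the coinduced set with the wreath-product permutation output by axioms (e) and (f). This requires reconciling three bookkeeping conventions at once — the coset description $\Set^K(H,X)\cong X^n$, the lexicographic identification $\underline{m}^n\cong\underline{m^n}$ underlying $\sigma_\times$ and $\tau_1\otimes\cdots\otimes\tau_n$ in \cref{notation:permutation in blocks}, and the left/right action convention of \cref{remark:gsigma}. Getting $\rho_h$ versus $\rho_h^{-1}$ and the order of composition exactly right is the delicate point; conceptually it amounts to the statement that $\lambda$ is equivariant for the wreath product $\Sigma_m\wr\Sigma_n\hookrightarrow\Sigma_{m^n}$ and that the graph subgroup of the coinduced set lands in this wreath product. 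Notably, only axioms (e) and (f) together with $G$-equivariance are used; the associativity and distributivity axioms (a)--(d) of the pairing are not needed for this direction.
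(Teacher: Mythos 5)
Your proposal is correct and is essentially the paper's own argument: the paper likewise models $\Set^K(H,X)$ as $\underline{m}^{\underline{n}}$ with the wreath-product action homomorphism $\theta(h)=(\tau(k_{1,h})\otimes\cdots\otimes\tau(k_{n,h}))\circ\sigma(h)_\times(m,\dots,m)$ (your $w_h=\varphi(h)$, via \cref{eq:def-of-kih}), takes exactly your candidate point $z=\lambda(a;b_1,\dots,b_n)$ with $b_i=(h_i,e)\cdot b$ the $G$-translates of a single $\Gamma(X)$-fixed element, and verifies $\Gamma_\theta$-fixedness using only the $G$-equivariance of $\lambda$ and axioms \labelcref{action-sigma} and (f) of \cref{def:operad_pairing}. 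Your closing observation that the associativity/distributivity axioms play no role also matches the paper's proof.
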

To prove this theorem, we need to check the conditions of \cref{def: indexing system pair}.  Specifically, given subgroups $K\leq H\leq G$ such that $H/K\in \cI_{P}(H)$ and a $K$-set $X\in \cI_{Q}(K)$ we need to show that the coinduced $H$-set $\Set^K(H,K)$ is in $\cI_Q(H)$. We prove this below, after setting notation and proving some preliminary results.

Recall from \cref{remark:gsigma} that if $\cO$ is an $N_{\infty}$-operad, we will treat left $G$-action and the right $\Sigma_n$-action on $\cO(n)$ as a combined  left $G \times \Sigma_n$-action. Explicitly, if $x\in \mathcal{O}(n)$ and $(g,\sigma)\in G\times \Sigma_n$ we have $(g,\sigma)\cdot x = gx\sigma^{-1}$.

Let $K\leq H\leq G$ be a chain of subgroups, and let $n = |H/K| = [H:K]$. We fix coset representatives $h_1 K, \dots ,h_n K$ of $H/K$, and consider the homomorphism that the action of $H$ on $H/K$ determines,
\begin{equation}
\label{eq:defining homomorphism}
    \sigma\colon H\to \Sigma_n,
\end{equation}
characterized by $h \cdot h_i K = h_{\sigma(h)(i)}K$ for all $i = 1, \ldots, n$. This homomorphism $\sigma$ also defines a graph subgroup of $H/K$ (relative to this choice of coset representatives) by 
\[
	\Gamma(H/K) \coloneqq \big\{ (h, \sigma(h)) \mid h \in H \big\} \leq G \times \Sigma_n
\]
as in \cref{def:graph_subgroup}.  We will need the following technical lemma.

\begin{lemma}\label{eq:def-of-kih}
Let $G$ be a finite group, and consider a chain of subgroups $K\leq H\leq G$ with a fixed choice of coset representatives $h_1,\dots,h_n$. Then for all $i \in \underline{n}$ and all $h \in H$, there is a unique $k = k_{i,h} \in K$ such that 
	\[
		h_i^{-1}h = k_{i,h}h_{\sigma(h)^{-1}(i)}^{-1}.
	\]
\end{lemma}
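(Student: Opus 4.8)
The plan is to write down the element $k_{i,h}$ explicitly and then check that it lies in $K$; once membership is verified, both the stated identity and its uniqueness are formal consequences of cancellation in $G$. First I would set $j \coloneqq \sigma(h)^{-1}(i)$, so that $\sigma(h)(j) = i$, and define the candidate
\[
    k_{i,h} \coloneqq h_i^{-1}\,h\,h_{\sigma(h)^{-1}(i)} = h_i^{-1}\,h\,h_j.
\]
The one substantive point is to show $k_{i,h}\in K$. This follows from applying the characterizing relation $h\cdot h_\ell K = h_{\sigma(h)(\ell)}K$ of the homomorphism $\sigma$ from \cref{eq:defining homomorphism} to the index $\ell = j$: since $\sigma(h)(j) = i$, we obtain $h\,h_j K = h_i K$, and hence $h_i^{-1}h\,h_j \in K$. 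This is precisely the assertion $k_{i,h}\in K$.

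With membership in hand, the required equation is immediate. Starting from $k_{i,h} = h_i^{-1}h\,h_{\sigma(h)^{-1}(i)}$ and multiplying on the right by $h_{\sigma(h)^{-1}(i)}^{-1}$ gives
\[
    h_i^{-1}h = k_{i,h}\,h_{\sigma(h)^{-1}(i)}^{-1},
\]
exactly as claimed. For uniqueness, suppose $k_1 h_{\sigma(h)^{-1}(i)}^{-1} = k_2 h_{\sigma(h)^{-1}(i)}^{-1}$ with $k_1,k_2\in K$; multiplying on the right by $h_{\sigma(h)^{-1}(i)}$ yields $k_1 = k_2$. Equivalently, the displayed equation forces $k_{i,h} = h_i^{-1}h\,h_{\sigma(h)^{-1}(i)}$, which determines $k_{i,h}$ completely.

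The only real subtlety is the index bookkeeping: one must feed the index $j = \sigma(h)^{-1}(i)$ (rather than $i$ itself) into the defining relation of $\sigma$, so that the resulting coset identity is $h\,h_j K = h_i K$ and produces an element of $K$ after multiplying by $h_i^{-1}$ on the left. Everything else is routine manipulation in the ambient group $G$, using only that the $h_\ell$ are a fixed system of coset representatives and that $\sigma$ encodes the permutation action of $H$ on $H/K$.
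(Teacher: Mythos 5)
Your proof is correct and is essentially the same argument as the paper's: both come down to the single coset identity $Kh_i^{-1}h = Kh_{\sigma(h)^{-1}(i)}^{-1}$, which you obtain by applying the defining relation of $\sigma$ at the index $j=\sigma(h)^{-1}(i)$, while the paper gets it by applying the relation to $h^{-1}$ and inverting cosets\textemdash an equivalent piece of bookkeeping. Your explicit formula $k_{i,h}=h_i^{-1}h\,h_{\sigma(h)^{-1}(i)}$ is a nice touch, but the substance is identical.
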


\begin{proof}
For all $i$ we have 
\[
    Kh_i^{-1}h = (h^{-1}h_iK)^{-1} = (h_{\sigma(h)^{-1}(i)}K)^{-1} = Kh_{\sigma(h)^{-1}(i)}^{-1}.
\]
Therefore, $h_i^{-1}h$ differs from $h^{-1}_{\sigma(h)^{-1}(i)}$ by left multiplication by a unique element of $K$. 
\end{proof}

Let $X$ be a $K$-set with $|X| = m$. Without loss of generality, we may assume that $X = \underline{m}$ and the $K$-action on $X$
is given by a group homomorphism 
\[
    \tau\colon K\to \Sigma_m\hspace{0.5em}\text{ via  }\hspace{0.5em} k\cdot x \coloneqq \tau(k)(x) \hspace{0.5em}\text{ for all }\hspace{0.5em}x\in X.
\]
\cref{eq:def-of-kih} and the morphism $\sigma$ of \eqref{eq:defining homomorphism} 
allow us to define a left $H$-action on $\underline{m}^{\underline{n}}$ via 
\[
	h \cdot (a_1, \ldots, a_n) = 
	(\tau(k_{1,h})(a_{\sigma(h)^{-1}(1)}), \dots, \tau(k_{1,h})(a_{\sigma(h)^{-1}(n)})) 
\] 
Observe that this determines and is determined by a homomorphism
\begin{equation}
\label{eq:h action on coinduced set}
\begin{tikzcd}[row sep=0]
	\theta \colon H \ar[r] & \Sigma_{m^n}\\
		\phantom{\nu \colon} h \ar[r, mapsto] & \big(\tau(k_{1,h})\otimes\dots\otimes \tau(k_{n,h})\big)\circ \sigma(h)_\times(m,\dots, m),
\end{tikzcd}
\end{equation}
using \cref{notation:permutation in blocks}.
The next lemma gives a very explicit model for the coinduced $H$-set $\Set^K(H,X)$, 
which we use in the proof of \cref{prop:operad-action-induces-compatible-pair}. 

\begin{lemma}\label{lemma: explicit action for coinduced sets}
    There is an $H$-equivariant bijection
    \[
        \Set^K(H,X)\cong \underline{m}^{\underline{n}},
    \]
    where $\Set^K(H,X)$ is an $H$-set via $(h \cdot f)(h') = f(h'h)$ and $\underline{m}^{\underline{n}}$ is an $H$-set via $\theta$. 
\end{lemma}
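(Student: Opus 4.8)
The plan is to construct an explicit $H$-equivariant bijection by evaluating a $K$-equivariant map on a carefully chosen set of coset representatives. First I would record the reformulation of the coinduced set: an element of $\Set^K(H,X)$ is a map $f\colon H\to X$ satisfying $f(kh') = \tau(k)(f(h'))$ for all $k\in K$ and $h'\in H$, and such an $f$ is completely determined by its values on a transversal for the right cosets $K\backslash H$. Since $H = \coprod_{i} h_i K$ by our choice of representatives, taking inverses yields $H = \coprod_i K h_i^{-1}$, so $\{h_1^{-1},\dots,h_n^{-1}\}$ is exactly such a transversal. This motivates the candidate map
\[
    \Phi\colon \Set^K(H,X)\to \underline{m}^{\underline{n}},\qquad \Phi(f) = \big(f(h_1^{-1}),\dots,f(h_n^{-1})\big).
\]

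Next I would verify that $\Phi$ is a bijection. Injectivity is immediate: every $h'\in H$ factors uniquely as $h' = k\,h_i^{-1}$ with $k\in K$, so $K$-equivariance forces $f(h') = \tau(k)(f(h_i^{-1}))$, and $f$ is recovered from its tuple of values. For surjectivity, any tuple $(a_1,\dots,a_n)$ defines a map by $f(k\,h_i^{-1}) \coloneqq \tau(k)(a_i)$; the uniqueness of the decomposition $h' = k\,h_i^{-1}$ makes this well defined, and a direct check shows it is $K$-equivariant with $\Phi(f) = (a_1,\dots,a_n)$.

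The core of the argument is $H$-equivariance, which I would establish by computing the $i$-th coordinate of $\Phi(h\cdot f)$ directly. Using the action $(h\cdot f)(h') = f(h'h)$ we get
\[
    \Phi(h\cdot f)_i = (h\cdot f)(h_i^{-1}) = f(h_i^{-1}h),
\]
and then I would invoke \cref{eq:def-of-kih} to rewrite $h_i^{-1}h = k_{i,h}\,h_{\sigma(h)^{-1}(i)}^{-1}$. Applying $K$-equivariance of $f$ gives
\[
    f(h_i^{-1}h) = \tau(k_{i,h})\big(f(h_{\sigma(h)^{-1}(i)}^{-1})\big) = \tau(k_{i,h})\big(\Phi(f)_{\sigma(h)^{-1}(i)}\big),
\]
which is precisely the $i$-th coordinate of the action $\theta$ of \eqref{eq:h action on coinduced set}. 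Hence $\Phi(h\cdot f) = h\cdot\Phi(f)$, so $\Phi$ is the asserted $H$-equivariant bijection.

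I expect the only real obstacle to be the bookkeeping around variance conventions: keeping left $K$-equivariance compatible with the right-translation $H$-action $(h\cdot f)(h') = f(h'h)$, and confirming that the elements $k_{i,h}$ produced by \cref{eq:def-of-kih} are exactly the twisting elements appearing in the definition of $\theta$. The choice of the \emph{inverse} transversal $\{h_i^{-1}\}$ is what makes the coset arithmetic of \cref{eq:def-of-kih} align with evaluation; once that alignment is in place, the equivariance computation requires no further input beyond the homomorphism property of $\tau$.
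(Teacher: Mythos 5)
Your proposal is correct and follows essentially the same route as the paper's proof: the same evaluation map $f \mapsto \bigl(f(h_1^{-1}),\dots,f(h_n^{-1})\bigr)$ on the inverse transversal, the same appeal to \cref{eq:def-of-kih} to rewrite $h_i^{-1}h = k_{i,h}h_{\sigma(h)^{-1}(i)}^{-1}$, and the same coordinate-wise equivariance computation identifying the result with the action $\theta$ of \eqref{eq:h action on coinduced set}. The only cosmetic difference is that you verify bijectivity by a direct injectivity/surjectivity argument, where the paper instead strings together natural bijections from the coset decomposition $H \cong \coprod_{i=1}^n K h_i^{-1}$; the content is the same.
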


\begin{proof}
    We have the natural bijections
    \begin{align*}
        \Set^K(H,X) 
        	&\cong 
			\Set^K\left(\coprod_{i = 1}^n{K h_i^{-1}}, X\right) 
			\cong 
			\Set\left(\coprod_{i = 1}^n\{h_i^{-1}\}, X\right)
				\\
            &\cong 
            \prod_{i = 1}^n{\Set(\{h_i^{-1}\}, X)} 
            \cong 
            \prod_{i = 1}^n{X} 
            \cong \underline{m}^{\underline{n}}.
    \end{align*}
    Given $f\in \Set^K(H,X)$, the above bijection is given by
    \[
        f\mapsto (f(h_{1}^{-1}),\dots,f(h_n^{-1})).
    \]
    Thus
    \begin{equation*}
        (h\cdot f) \mapsto (f(h_1^{-1}h), \dots, f(h_n^{-1}h)).
    \end{equation*}
    For all $h\in H$ and $i=1\dots, n$ we have
    \[
        f(h_i^{-1}h) = f(k_{i,h}h^{-1}_{\sigma(h)^{-1}(i)}) = k_{i,h}\cdot f(h^{-1}_{\sigma(h)^{-1}(i)}) = \tau(k_{i,h})(f(h^{-1}_{\sigma(h)^{-1}(i)}))
    \]
    where the first equality holds by \cref{eq:def-of-kih}, the second because $f\in \Set^K(H,X)$, and the third by definition. It follows that the bijection sends $(h\cdot f)$ to the tuple 
    \[
        (\tau(k_{1,h})\otimes\dots\otimes \tau(k_{n,h}))\cdot(f(h^{-1}_{\sigma(h)^{-1}(1)}),\dots,f(h^{-1}_{\sigma(h)^{-1}(n)})).
    \]
    This is equal to
    \[
       \bigg( (\tau(k_{1,h})\otimes\dots\otimes \tau(k_{n,h}))\circ\sigma(h)_\times( m,\dots, m) \bigg) \cdot \big(f(h_{1}^{-1}),\dots,f(h_n^{-1})\big)
    \]
    which is the action of $\theta(h)$, as claimed.
\end{proof}

\begin{proof}[Proof of \cref{prop:operad-action-induces-compatible-pair}]\label{proposition: proof of theorem 5.1}
Following \cref{def: indexing system pair}, we must show that given $H/K \in \cI_\cP(H)$ with $n = |H \colon K|$, and $X \in \cI_\cQ(K)$ with $m = |X|$, then $\Set^K(H,X)$ is in $\cI_\cQ(H)$. 

Let us consider $\theta$ as in \eqref{eq:h action on coinduced set}. To show that $\Set^K(H,X) \in \cI_{\cQ}(K)$, by \cref{lemma: explicit action for coinduced sets} it is enough to show that 
\[
	\mathcal{Q}(m^n)^{\Gamma_\theta} = \mathcal{Q}(|\Set^K(H,X))|)^{\Gamma_\theta}
\] 
is non-empty, where
\[
	\Gamma_\theta = \big\{ (h,\theta(h)) \mid h\in H\big\}.
\]
Let $\lambda$ be the action of $\cP$ on $\cQ$. The idea of the proof is to produce elements $a\in \cI_{\cP}(n)$ and $b_1,\dots,b_n\in \cQ(m)$ such that $\lambda(a;b_1,\dots,b_n)\in \cQ(m^n)$ is a $\Gamma_{\theta}$-fixed point.  We first define the elements $a$ and $b_1,\dots,b_n$, and then check that they produce the desired fixed point. 

Since $H/K \in \cI_\cP(H)$ then $\cP(n)^{\Gamma(H/K)}$ is non-empty, so we may choose $a \in \mathcal{P}(n)^{\Gamma(H/K)}$. In particular, for all $h \in H$, 
\begin{equation}\label{eq:a-in-P-fixed}
(h,\sigma(h))\cdot a = a.
\end{equation}
Since $X \in \cI_\cQ(K)$ then $\cQ(m)^{\Gamma(X)}$ is non-empty, so we may choose $b \in \cQ(m)^{\Gamma(X)}$. 
In particular, for all $k \in K$,
\[
(k,\tau(k)) \cdot b = b. 
\]
This is equivalent to 
\begin{equation}
\label{eq:Gamma(X) fixed point relation}
	(e,\tau(k)) \cdot b = (k^{-1},e) \cdot b
\end{equation}
for all $k\in K$. We define $b_i \coloneqq (h_i,e)\cdot b$ for $i = 1, \dots, n$, and observe that for all $h \in H$, 
\begin{align*}
(h^{-1},e)(h_i,e)\cdot b &= (h^{-1} h_i,e)\cdot b \\
&= (h_{\sigma(h)^{-1}(i)} k_{i,h}^{-1},e)\cdot b \\
 &= (h_{\sigma(h)^{-1}(i)},e) (k_{i,h}^{-1},e)\cdot b \\
 &= (h_{\sigma(h)^{-1}(i)},e) (e,\tau(k_{i,h})) \cdot b \\
 &= (e,\tau(k_{i,h})) (h_{\sigma(h)^{-1}(i)},e) \cdot b \\
 &= (e,\tau(k_{i,h})) \cdot b_{\sigma(h)^{-1}(i)},
\end{align*}
where 
the second equality follows from \cref{eq:def-of-kih}, the fourth equality follows from \eqref{eq:Gamma(X) fixed point relation}, and the sixth equality is the definition of $b_{\sigma(h)^{-1}(i)}$.  
Multiplying both sides by $(h, e)$ we see that, for any $h\in H$,
\begin{equation}\label{eq:htau-action-bi}
b_i = (h_i,e)\cdot b = (h,\tau(k_{i,h})) \cdot b_{\sigma(h)^{-1}(i)}.
\end{equation}

We claim that $z \coloneqq \lambda(a; b_1, \dots, b_n) \in \mathcal{Q}(m^n)^{\Gamma_\theta}$. Let $h \in H$, so by definition 
\[
(h, \theta(h))\cdot z 
	= \big(h, (\tau(k_{1,h})\otimes\dots\otimes \tau(k_{n,h}))\circ \sigma(h)_\times( m,\dots, m) \big)\cdot \lambda(a; b_1, \dots, b_n). 
\]
Because the action $\lambda$ is compatible with the $G$-action and the $\Sigma_i$-actions, this becomes 
\[
 \lambda((h,\sigma(h))\cdot a; (h,\tau(k_{1,h}))\cdot b_{\sigma(h)^{-1}(1)}, \dots, (h,\tau(k_{n,h}))\cdot b_{\sigma(h)^{-1}(n)}),
\]
which is equal to $z = \lambda(a; b_{1}, \dots, b_{n})$ by \eqref{eq:a-in-P-fixed} and \eqref{eq:htau-action-bi}. Therefore, $\cQ(m^n)^{\Gamma_\theta}$ is non-empty, so $\Set^K(H,X) \in \cI_\cQ(H)$.  
\end{proof}

\section{Building operads from monoids}

In this section we focus on a method for defining operads and pairings of operads from the simpler structure of monoids. While this is interesting in its own right, our motivation for presenting it here lies in \cref{sec:Ninfinity_from_indexing}, where it allows us to build examples of indexing systems from monoids. In \cref{subsec:operads-from-monoids} we begin by recalling the main construction in \cite{Giraudo-operads-from-monoids} that associates a symmetric operad to any monoid, and expand this study to operad pairings. In \cref{subsec:operads-from-intersection-monoids} we refine the previous results using \emph{intersection} monoids, originally introduced in \cite{szczesny2025realizingtransfersystemssuboperads}, which allows for a tighter control and the construction of operad pairings\textemdash this feature is crucial for our objective of realizing pairings of transfer systems. 

\subsection{Operads from monoids}\label{subsec:operads-from-monoids} We build on a construction of Giraudo \cite[Section 2]{Giraudo-operads-from-monoids}, which in turn builds on ideas of Boardman--Vogt \cite{BoardmanVogt} and Igusa \cite{Igusa}, that associates a symmetric operad to a monoid.
Recall \cref{notation:block permutation,notation:permutation in blocks}.

\begin{definition}\label{def:operad_from_M}
    Let $M$ be a monoid. We define its associated symmetric operad $\cO(M)$ in $(\Set, \times, \ast)$ by
    \[
    	\mathcal{O}(M)(n)\coloneqq M^n,
	\]
    with right $\Sigma_n$-action on $\cO(M)(n)$ given by permuting the tuples. Explicitly, for $x=(x_1,\dots,x_n)\in\cO(M)(n)$ and $\sigma\in\Sigma_n$, we have 
\[
    x \cdot \sigma \coloneqq (x_{\sigma^{-1}(1)},\dots,x_{\sigma^{-1}(n)}).
\]
    Note that $\mathcal{O}(M)(0)=\ast$. The composition rule in this operad is defined as follows. Let $x=(x_1,\dots,x_n)\in \mathcal{O}(M)(n), $ and $y_i=(y_{i,1},\dots,y_{i,{k_i}})\in \mathcal{O}(M)(k_i)$ for $i=1,\dots, n$, we define 
    \[\alpha(x;y_1,\dots,y_n) = (x_1\cdot y_{1,1}, x_1\cdot y_{1,2}, \dots, x_1 \cdot y_{1,k_1},x_2\cdot y_{2,1},\dots, x_2,y_{2,k_2}, \dots, x_n \cdot y_{n,1},\dots, x_n \cdot y_{n,k_n}). \]
  More precisely, we take the tuples of products $x_i\cdot y_{i,j}$ ordered lexicographically, first by $i$ and then by $j$.
\end{definition}

 \begin{remark}
     Alternatively, for $1\le \ell \le k_+$, we can write the $\ell$-th component of $\alpha(x;y_1\dots,y_n)$ as
     \[
        \alpha(x;y_1\dots,y_n)_\ell = x_{\gamma(\ell)}\cdot y_{\gamma(\ell),\beta(\ell)},
     \]
     where $1\leq \gamma(\ell)\leq n$ is such that 
     \begin{equation}\label{equation: operad structure for monoids}
     \sum\limits_{i=1}^{\gamma(\ell)-1} j_i< \ell \leq \sum\limits_{i=1}^{\gamma(\ell)} j_i
     \end{equation}
     and $\beta(\ell) = \ell - \sum\limits_{i=1}^{\gamma(\ell)-1} j_i$.  Note that choices of $\beta$ and $\gamma$ are such that they line up with \cref{subsection:distributivity bijection}.
 \end{remark}

\begin{remark}\label{rem:not_free} The $\Sigma$-action is not free, as it fixes constant tuples. Thus, as a minimum, we need to remove the diagonal subgroup from $M^n$ in order to obtain a free action. However, removing the diagonal does not generally produce an operad, since the resulting spaces may not be closed under composition. Later, we rely on the notion of intersection monoid to produce related operads that are indeed $\Sigma$-free.
\end{remark}

We are interested in this construction in the presence of a $G$-action in order to produce $G$-operads. One could add a $G$-action to the monoid $M$, but this is not enough to ensure that the resulting operad $\cO(M)$ is a $G$-operad since the composition maps may not be equivariant, and the $G$-action may not fix the identity. To correct this issue, we will use the following definition as the correct equivariant generalization of monoids for our applications.

\begin{definition}
    A $G$-monoid is a monoid $M$ in $(\Set^G,\times,\ast)$.
\end{definition}

A monoid $M$ in $\Set^G$ is thus one in which the corresponding multiplication map $\mu\colon M\times M\to M$ is $G$-equivariant and where the unit $1\in M$ is $G$-fixed. This ensures that the composition maps $\gamma$ of $\cO(M)$ are also equivariant. We can then deduce that if $M$ is a $G$-monoid, then $\cO(M)$ is a well-defined $G$-operad.

\subsubsection{Pairings of monoids}
We now define pairings of monoids and show that they induce a pairing of operads.

\begin{definition}
\label{definition: pairing of monoids}
    A \emph{pairing of monoids} ($M, N, \xi$) is the data of two monoids $M$ and $N$ together with a map 
    \[
    	\xi \colon M\times N \to N
	\] 
	satisfying the following axioms:
    \begin{enumerate}
        \item\label{item-pairing-monoids-identity} (\emph{identity}) for any $m\in M$, and $n\in N$ we have $\xi(1_M,n)=n$, and $\xi(m,1_N)=1_N$,
        \item\label{item-pairing-monoids-associativity} (\emph{associativity}) for any $m_1,m_2\in M$ and $n\in N$ we have 
        	\[
				\xi(m_1,\xi(m_2,n))=\xi(m_1 \cdot m_2,n),
			\]
        \item\label{item-pairing-monoids-distributivity} (\emph{distributivity}) for any $m\in M$ and $n_1,n_2\in N$ we have 
        \[
            \xi(m,n_1 \cdot n_2) = \xi(m,n_1)\cdot \xi(m,n_2),
        \]

        \item\label{item-pairing-monoids-centrality} (\emph{centrality}) for any $m_1,m_2\in M$ and $n_1,n_2\in N$ we have     \[
                \xi(m_1,n_1) \cdot \xi(m_2,n_2) = \xi(m_2,n_2) \cdot \xi(m_1,n_1).
            \]
    \end{enumerate}
\end{definition}

\begin{remark}
\label{remark:compatible-pair-monoids-N-commutative}
The identity and centrality conditions force \( N \) to be a commutative monoid when \( m_1 = m_2 = 1_M \). 
As a result, if there is a pairing of $M$ and $N$, the monoid \( N \) must be commutative, and the action can be described as a monoid homomorphism \( \widetilde{\xi} \colon M \to \mathrm{End}(N) \), where \( \mathrm{End}(N) \) denotes the endomorphism monoid of \( N \).
\end{remark}

\begin{example}
Let us consider the monoids $A\coloneqq (\mathbb{N},+,0)$ and $M\coloneqq (\mathbb{N},\times ,1)$. Defining the structure map $\xi\colon M\times A\to A$ by $\xi(m,a)=a+\dots+a$, the addition of $a$ $m$-times. It is straightforward to check that this defines a pairing of the monoids $M$ and $A$. 
More generally, given any semiring $R$, commutativity of addition implies there is a pairing of the multiplicative monoid with the additive monoid.
\end{example}

We can use pairings of monoids to build pairings of operads in $(\Set,\times,\ast)$. Recall from \cref{notation:permutation in blocks} that the lexicographical ordering on tuples $\bm{q} = (q_1,\ldots,q_n) \in \prod_{i=1}^n \underline{j_i}$ gives us a preferred bijection
\[
	\underline{j_1} \times \cdots \times \underline{j_n} \cong \underline{j_\times},
\]
where $j_\times = \prod_{i=1}^n j_i$.
Using this, we have a preferred bijection  
\[
    \Map(\underline{j_\times}, M)
    \cong 
    \Map(\underline{j_1} \times \cdots \times \underline{j_n}, M).
\]
We will use this bijection implicitly from here. In particular, we view elements of $\mathcal{O}(M)(j_\times)$ as functions $\underline{j_1} \times \cdots \times \underline{j_n}\to M$. 
Explicitly, given $m$ be a non-negative integer, $M$ a monoid, and $x = (x_1,\dots,x_n) \in M^n$, we denote $x(i) = x_i$ for all $1 \leq i \leq n$. 

\begin{theorem}
\label{thm: operad pairing from compatible monoids} 
Let $(M,N,\xi)$ be a pairing of monoids. The family of maps
\[
	\lambda\colon \mathcal{O}(M)(k)\times \mathcal{O}(N)(j_1)\times \cdots \times \mathcal{O}(N)(j_k) \to \mathcal{O}(N)(j_\times), 
\]
determined on all $\bm{\ell}=(\ell_1,\dots,\ell_k)\in \underline{j_1} \times \cdots \times \underline{j_k}$ by 
\begin{align*}
    \lambda(p,x_1,\dots x_k)(\bm{\ell})= \prod_{r=1}^k\xi(p(r),x_r(\ell_r)),
\end{align*}
gives an operad pairing $(\cO(M),\cO(N),\lambda)$.
\end{theorem}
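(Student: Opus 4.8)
The plan is to verify the eight conditions \ref{def:operad_pairing_first_condition}--\ref{action-when-xi-ast} of \cref{def:operad_pairing} directly from the component formula for $\lambda$, using throughout that $N$ is commutative (\cref{remark:compatible-pair-monoids-N-commutative}). Four of the conditions are immediate consequences of the identity axiom of a pairing of monoids. For the $\cP$-unit condition \ref{action-id-P}, the element $\mathrm{id}_{\cP}\in\cO(M)(1)$ is the tuple $(1_M)$, so $\lambda(\mathrm{id}_{\cP};x)(\ell)=\xi(1_M,x(\ell))=x(\ell)$. For condition (d), each argument $\mathrm{id}_{\cQ}=(1_N)$ forces every $j_r=1$, hence $j_\times=1$, and the single component is $\prod_{r}\xi(p(r),1_N)=1_N$. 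The $k=0$ condition (g) is the empty product $1_N\in N=\cO(N)(1)$, and condition \ref{action-when-xi-ast} is superfluous by reducedness, as noted in the definition.

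For the two equivariance conditions \ref{action-sigma} and (f), I would unwind the permutation actions on $\cO(M)$ and $\cO(N)$ (each by permuting tuple entries) together with the block-permutation notation $\sigma_{\times}(j_1,\dots,j_k)$ and $\tau_1\otimes\dots\otimes\tau_k$ of \cref{notation:permutation in blocks}. In each case both sides are tuples in $N^{j_\times}$, and since the $\bm{\ell}$-component of $\lambda(p;x_1,\dots,x_k)$ is the product $\prod_r\xi(p(r),x_r(\ell_r))$, the relabeling of indices induced by the permutation on the inputs matches precisely the block-permutation acting on the output. Commutativity of $N$ is what permits the factors to be reordered when $\sigma$ permutes the $x_r$ among each other.

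The first substantive condition is the associativity axiom \ref{def:operad_pairing_first_condition}, which I would establish by expanding both sides componentwise over $\bm{\ell}$. By the explicit composition formula of \cref{def:operad_from_M}, the relevant coordinate of $\mu(p;p_1,\dots,p_k)\in\cO(M)$ is a product $p(\gamma)\cdot p_{\gamma}(\beta)$ in $M$ for appropriate block indices $\gamma,\beta$. Feeding this into $\lambda$ produces factors of the form $\xi(p(\gamma)\cdot p_\gamma(\beta),x(\ell))$, whereas the nested application on the right-hand side produces $\xi(p(\gamma),\xi(p_\gamma(\beta),x(\ell)))$. These agree by the associativity axiom $\xi(m_1,\xi(m_2,n))=\xi(m_1 m_2,n)$ of the pairing of monoids, once the index bookkeeping of the two compositions is matched.

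I expect the main obstacle to be the distributivity condition \ref{def:operad_pairing_second_condition}, which is the only place the distributivity bijection $\nu$ of \cref{subsection:distributivity bijection} enters. My approach is to evaluate both sides at a tuple $\bm{\ell}=(\ell_1,\dots,\ell_k)\in\prod_r\underline{i_{r,+}}$. On the right-hand side, the composite $\alpha(x_r;x_{J_r})$ in $\cO(N)$ has $\ell_r$-component $x_r(\gamma_r(\bm{\ell}))\cdot x_{r,\gamma_r(\bm{\ell})}(\beta_r(\bm{\ell}))$, and applying the distributivity axiom $\xi(m,n_1 n_2)=\xi(m,n_1)\,\xi(m,n_2)$ splits the $\bm{\ell}$-component of $\lambda(p;\alpha(x_1;x_{J_1}),\dots)$ into $\prod_r\xi(p(r),x_r(\gamma_r(\bm{\ell})))\cdot\xi(p(r),x_{r,\gamma_r(\bm{\ell})}(\beta_r(\bm{\ell})))$. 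On the left-hand side, precomposing with $\nu$ identifies $\bm{\ell}$ with the pair $(\bm{q}(\bm{\ell}),\bm{\beta}(\bm{\ell}))$, where $\bm{q}(\bm{\ell})=(\gamma_1(\bm{\ell}),\dots,\gamma_k(\bm{\ell}))$ names the summand and $\bm{\beta}(\bm{\ell})$ the position within it; the composition $\alpha$ in $\cO(N)$ then returns the product of the $\bm{q}(\bm{\ell})$-component of $\lambda(p;x_1,\dots,x_k)$ with the $\bm{\beta}(\bm{\ell})$-component of $\lambda(p;x_{\bm{q}(\bm{\ell})})$, namely $\big(\prod_r\xi(p(r),x_r(\gamma_r(\bm{\ell})))\big)\cdot\big(\prod_r\xi(p(r),x_{r,\gamma_r(\bm{\ell})}(\beta_r(\bm{\ell})))\big)$. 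The two expressions coincide once commutativity of $N$ is used to interleave the products. The genuine work here is combinatorial: one must check that the indexing functions $\gamma_r,\beta_r$ produced by $\nu$ are literally the same as those governing the internal composition of $\cO(N)$, so that the lexicographic orderings on both sides align. The algebraic input is exactly the distributivity and commutativity of the monoid pairing, and once the orderings are reconciled the identity is forced.
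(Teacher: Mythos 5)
Your proof is correct and takes essentially the same approach as the paper's: evaluate both sides of each axiom componentwise at $\bm{\ell}$, match the index bookkeeping of $\mu$, $\alpha$, and the distributivity bijection $\nu$ (which the paper arranges to coincide by construction), and let the monoid-pairing axioms supply the equalities, with the remaining axioms dispatched as routine exactly as you indicate. Two small notes: in the associativity axiom \labelcref{def:operad_pairing_first_condition} you also need distributivity of $\xi$, not only associativity, to split $\xi\bigl(p(r),\prod_{q}\xi(p_r(q),x_{r,q}(\ell_{r,q}))\bigr)$ into individual factors before associativity applies; and where you invoke commutativity of $N$ the paper invokes the centrality axiom directly --- equivalent here by \cref{remark:compatible-pair-monoids-N-commutative}, but the centrality phrasing is what survives the later refinement to intersection monoids (\cref{thm: operad pairing from compatible intersection monoids}), where $N$ need not be commutative.
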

\begin{proof}
    Let us begin by setting some notation.  Suppose we are given a positive integer $k$, for all $1\leq r\leq k$ we fix positive integers $j_r$, and for all $1\leq q\leq j_r$ we fix a positive integer $i_{r,q}$.  Suppose we are given choices of elements
    \[
        p\in \mathcal{O}(M)(k),\quad p_r\in \cO(M)(j_r), \quad x_r\in \cO(N)(j_r), \quad x_{r,q}\in\cO(N)(i_{r,q})
    \]
    in our operads. Recall from \cref{notation:permutation in blocks} the shorthands 
    \[
        x_{J_r} = (x_{r,1},\dots, x_{r,j_{r}})\in \cO(N)(i_{r,1})\times\dots\times \cO(N)(i_{r,j_r})
    \]
    and when $\bm{q} = (q_1,\dots q_k)\in \underline{j_1}\times \dots\times \underline{j_k}$ also 
    \[
        x_{\bm{q}} = (x_{1,q_1},x_{2,q_2},\dots, x_{k,q_k})\in \cO(N)(i_{1,q_1})\times \dots\times \cO(N)(i_{k,q_k}).
    \]
    We assume that the collection of all such $\bm{q}$ is ordered according to the lexicographical ordering on $\underline{j_1}\times\dots\times \underline{j_k}$.

	We now show that the operation $\lambda$ satisfies the axioms of \cref{def:operad_pairing}.  Axioms \labelcref{action-id-P}-\labelcref{action-when-xi-ast}
    are straightforward and we omit the proofs, noting that centrality is needed for axiom \labelcref{action-sigma}. 
    
    We need to establish the following relations: 
    \begin{enumerate}
    \setlength{\itemsep}{8pt}
        \item\label{relation-item-compatible-pair-monoids-1} $\lambda(\mu(p;p_1,\dots,p_k);x_{J_1},\dotsm,x_{J_k}) = \lambda(p;\lambda(p_1;x_{J_1}),\dots,\lambda(p_k; x_{J_k}))$,
          \item\label{relation-item-compatible-pair-monoids-2} $\alpha(\lambda(p;x_1,\dots, x_k); \prod\limits_{\bm{q}} \lambda(p;x_{\bm{q}}))\circ \nu = \lambda(p; \alpha(x_1; x_{J_1}),\dots,\alpha(x_k;x_{J_k}))$,
    \end{enumerate}
    where $\nu$ is the distributivity permutation described in \cref{subsection:distributivity bijection}, and $\mu$ and $\alpha$ are the operad structure maps on $\cO(M)$ and $\cO(N)$, respectively.

   Let $n = \prod_{r=1}^k\prod_{q= 1}^{j_r}i_{r,q}$. To show \labelcref{relation-item-compatible-pair-monoids-1}, we must show that two elements in $\cO(N)(n)$ are the same. Via our preferred bijection $\underline{n}\cong \prod_{r=1}^k\prod_{q= 1}^{j_r}\underline{i_{r,q}}$, it suffices to fix an element $\bm{\ell}  = (\ell_{r,q})$ where $\ell_{r,q}\in \underline{i_{r,q}}$, and check that evaluating either side of the equation on $\bm{\ell}$ yields the same element of $N$.
    Evaluating the right hand side of \labelcref{relation-item-compatible-pair-monoids-1} 
    at $\bm{\ell}$ yields
    \begin{align*}
        \lambda(p;\lambda(p_1;x_{J_1}),\dots,\lambda(p_k; x_{J_k}))(\bm{\ell}) & = \prod\limits_{r=1}^k \xi(p(r),\lambda(p_r,x_{J_r})((\ell_{r,1},\dots,\ell_{r,j_r})))\\
        & = \prod\limits_{r=1}^k \xi\left(p(r),\prod\limits_{q=1}^{j_r}\xi(p_r(q),x_{r,q}(\ell_{r,q}))\right)\\
        & = \prod\limits_{r=1}^k \prod\limits_{q=1}^{j_r}\xi(p(r),\xi(p_r(q),x_{r,q}(\ell_{r,q})))\\
        & = \prod\limits_{r=1}^k \prod\limits_{q=1}^{j_r}\xi(p(r)\cdot p_r(q),x_{r,q}(\ell_{r,q})).
    \end{align*}
    Here, the first and second equality are the definition of $\lambda$, the third is 
    the distributivity of $\xi$,  
    and the fourth is the associativity of $\xi$, see \cref{definition: pairing of monoids}. 
    Evaluating the left hand side of \labelcref{relation-item-compatible-pair-monoids-1} 
    at $\bm{\ell}$ we obtain
    \[
         \prod\limits_{r=1}^k\prod\limits_{q= 1}^{j_r} \xi(\mu(p;p_1,\dots,p_k)(r,q), x_{r,q}(\ell_{r,q}))
    \]
    and the claim follows from the definition of $\mu$ in \cref{def:operad_from_M}, which gives us $\mu(p;p_1,\dots,p_k)(r,q) = p(r) \cdot p_r(q)$. 

    To establish \labelcref{relation-item-compatible-pair-monoids-2},  
    we employ the same strategy.  Let us write 
    \[
        m =  \prod_{r=1}^{k}\sum_{q=1}^{j_r} i_{r,q} =\sum_{\bm{q}} \prod_{r=1}^k i_{r,q_r}
    \]
    where the sum in the middle is over all $\bm{q} = (q_1,\dots,q_k)\in \underline{j_1}\times\dots\times \underline{j_r}$.  
    Both sides of \labelcref{relation-item-compatible-pair-monoids-2}  
    are elements in $\cO(N)(m)$, where we take the explicit presentation of $\underline{m}$ as 
    \[
        \underline{m}\cong \prod\limits_{r=1}^k 
        \underline{i_{r,+}},
    \]
    where $i_{r,+} = \sum_{q=1}^{j_r} i_{r,q}$. 
    The distributivity bijection
    \[
        \nu\colon  \prod\limits_{r=1}^k\underline{i_{r,+}}\xrightarrow{\cong} \coprod\limits_{\bm{q}}\ \prod\limits_{r=1}^k \underline{i_{r,q_r}},
    \]
    where the coproduct on the right is taken over all tuples $\bm{q} = (q_1,\dots,q_k)\in \underline{j_1}\times\dots\times \underline{j_r}$, 
    is the permutation described explicitly in \cref{subsection:distributivity bijection}.  For the remainder of this proof, we adopt the notation therein.  
    Fix $\bm{\ell} = (\ell_1,\ldots,\ell_k)$ where each $\ell_r\in \underline{i_{r,+}}$ and write $\bm{q}(\bm{\ell})$ for the unique $\bm{q}$ such that $\nu(\bm{\ell})$ is the $\bm{q}$-block of the target of $\nu$.  
    Evaluating the right hand side of \labelcref{relation-item-compatible-pair-monoids-2},  
    at $\bm{\ell}$ and get
    \begin{align*}
         \lambda(p; \alpha(x_1; x_{J_1}),\dots,\alpha(x_k;x_{J_k}))(\bm{\ell}) & = \prod\limits_{r=1}^k \xi(p(r), \alpha(x_r,x_{J_r})(\ell_r))\\
         & = \prod\limits_{r=1}^k \xi(p(r), x_r(\gamma_r(\bm{\ell}))\cdot x_{r,\gamma_r(\bm{\ell})}(\beta_r(\bm{\ell}))\\
         & = \prod\limits_{r=1}^k \xi(p(r), x_r(\gamma_r(\bm{\ell})))\cdot\xi(p(r), x_{r,\gamma_r(\bm{\ell})}(\beta_r(\bm{\ell})))\\
         & = \left(\prod\limits_{r=1}^k \xi(p(r), x_r(\gamma_r(\bm{\ell})))\right)\cdot\left( \prod\limits_{r=1}^k\xi(p(r), x_{r,\gamma_r(\bm{\ell})}(\beta_r(\bm{\ell})))\right)\\
         & = \lambda(p;x_1\dots,x_r)(\bm{q}(\bm{\ell}))\cdot \lambda(p;x_{\bm{q}(\bm{\ell})})(\beta_1(\bm{\ell}),\dots,\beta_{k}(\bm{\ell})) \\
         & = \alpha(\lambda(p;x_1\dots,x_r);\prod\limits_{\bm{q}} \lambda(p;x_{\bm{q}}))(\nu(\bm{\ell}))
    \end{align*}
    where the first equality is the definition of $\lambda$, the second is the definition of $\alpha$, 
    the third is the distributivity of $\xi$, 
    the fourth is the centrality of $\xi$, see \cref{definition: pairing of monoids} for these, 
    the fifth is the definition of $\lambda$, and the sixth uses the definitions of $\alpha$ and $\nu$.  Since the final term is exactly the left hand side of equation \labelcref{relation-item-compatible-pair-monoids-2},  
    the proof is complete.
\end{proof}

\begin{remark}\label{rem:Gaction_monoids}
    If $M$ and $N$ are $G$-monoids and the pairing $\xi\colon M\times N\to N$ is such that $\xi(g\cdot m,g\cdot n) = g\cdot\xi(m,n)$ for all $m\in M$, $n\in N$, and $g\in G$, then tracing through the proof above shows that $\lambda$ gives a pairing of operads in $\Set^G$.
\end{remark}

\subsection{Operads from intersection monoids}\label{subsec:operads-from-intersection-monoids} A natural question to ask is if the operad pairing $(\cL,\cK)$ of the linear isometries operad and the Steiner operad (see \cref{prop:operad_pairing_linearisometries_Steiner}) can be obtained as one of the above. This is not the case, as we show below. However, we can use \emph{intersection monoids} \cite{szczesny2025realizingtransfersystemssuboperads} to adapt the constructions above in a way that recovers this example. Furthermore,  when we have an intersection monoid, we obtain control over the structure of the operad in a manner similar to an embedding operad (such as the little disks). We later use this to build interesting suboperads of $\cO(M)$ that will aid in the goal of realizing operad pairings. 

\begin{remark}
There is no pairing of the monoids $(\cL(1),\cK(1))$, 
as the centrality axiom cannot hold. 
Indeed, note that the pointwise composition of two Steiner paths does not commute, so $\cK(1)$ is not commutative 
and therefore there can be no pairing (\cref{remark:compatible-pair-monoids-N-commutative}). As we will see later, one can indeed construct a pairing that satisfies all axioms except for centrality.
\end{remark} 

\begin{definition}\label{definition: intersection monoid}
    An \emph{intersection monoid} is a monoid $M$ with a symmetric relation $\vee$ such that the following holds. 
    \begin{enumerate}
        \item The complement relation\footnote{The complement relation $\wedge$ is defined as $x\wedge y$ if and only if $x$ is not related to $y$ by $\vee$.} $\wedge$ is reflexive.
        \item If $x_1,x_2,y_1,y_2\in M$ and $x_1\vee x_2$, then $(x_1 \cdot y_1) \vee (x_2 \cdot y_2)$.
        \item If $x_1,x_2,y\in M$ and $x_1\vee x_2$, then $(y \cdot x_1) \vee (y \cdot x_2)$.
    \end{enumerate}
    If $x\vee y$ we will say that $x$ and $y$ are \emph{disjoint}. If $x\wedge y$ then we will say that $x$ and $y$ \emph{intersect}. An intersection monoid has a \emph{non-trivial intersection relation} if there exists at least one pair of disjoint elements. 
    An \emph{intersection $G$-monoid} is a $G$-monoid $M$ whose underlying monoid structure in $\Set$ is an intersection monoid such that if $x_1,x_2\in M$ with $x_1\vee x_2$, then $(g \cdot x_1) \vee (g \cdot x_2)$ for all $g\in G$. 
\end{definition}

\begin{remark}
    For any intersection monoid $((M,\cdot, 1), \vee)$, the identity element $1$ intersects all elements of $M$. Indeed, if there exists $x\in M$ such that $x\vee 1$ then by axiom (b) in the definition we should have $x\cdot 1\vee 1\cdot x$, which contradicts axiom (a), the reflexivity of the complement relation. 
\end{remark}

\begin{remark}
    If $M$ is an intersection monoid with a non-trivial intersection relation, then one can show that for any positive integer $n$ there always exists a collection of elements $x_1, \dots, x_n$ that are pairwise disjoint. See \cite[Lemma 3.2]{szczesny2025realizingtransfersystemssuboperads} for a proof of this. 
\end{remark}

The definition and terminology are motivated by the following examples. 

\begin{example}
	Let $X$ be a set and let $M$ be the monoid of injective functions $f \colon X \hookrightarrow X$ under composition. This is an intersection monoid with $\vee$ given by $f \vee g$ if and only if $f$ and $g$ have disjoint images. The complement relation is $f \wedge g$ if and only if their images intersect. 

    If $X$ is a $G$-set, then we can give $M$ a $G$-action via conjugation. In this case, $M$ then becomes an example of an intersection $G$-monoid.
\end{example}

\begin{example}\label{ex:intersection_monoids}
    Some important examples come from the unary components of well-known operads. For example, the following are intersection monoids.
    \begin{enumerate}
        \item If $\mathcal{C}_k$ is the operad of little $k$-cubes, then $\mathcal{C}_k(1)$ is an intersection monoid, where for $x,y\in \mathcal{C}_k(1)$ we set $x\vee y$ if and only if 
        \[ 
            x(\mathrm{int}(I^k))\cap y(\mathrm{int}(I^k))=\emptyset.
        \]
        In fact, any embedding operad gives an intersection monoid in this way. 
        \item The previous example also extends to the Steiner operad $\cK$. Given two Steiner paths $h_1,h_2\in\cK(1)$, then we set $h_1\vee h_2$ if and only if 
        \[
            \im(h_1(0)) \cap \im(h_2(0)) = \emptyset. 
        \]
        \item For the linear isometries operad $\mathcal{L}$, the unary space $\mathcal{L}(1)$ is an intersection monoid, where for $x,y\in \mathcal{L}(1)$ we have $x\vee y$ if and only if $x(\RR^\infty)\perp y(\RR^\infty)$. 
    \end{enumerate}
\end{example}

We can use intersection monoids to build suboperads of the operads associated with monoids in~\cref{def:operad_from_M}. 

\begin{definition}\label{def:operad_from_IM}
    Let $M$ be an intersection monoid. We define a suboperad $\cO^\vee(M)$ of $\cO(M)$ by taking
    \[
    	\mathcal{O}^\vee(M)(n)
			\coloneqq
			\{
				(x_1,\dots,x_n) \in M^n 
					\mid 
				x_i\vee x_j\text{ for all }i\neq j
			\}.
	\]
Note that the composition of $\cO(M)$ preserves the pairwise disjointness of the elements, and so we indeed have a suboperad. When $M$ is an intersection $G$-monoid, then the spaces $\cO^\vee(M)(n)$ are preserved by the $G$-action, and thus $\cO^\vee(M)$ is a well-defined $G$-suboperad of $\cO^\vee(M)$.
\end{definition}

\begin{remark}
    It is straightforward to check that the $\Sigma$-action on $\cO(M)$ described in~\cref{def:operad_from_M} 
    restricts to a $\Sigma$-action on $\cO^{\vee}(M)$. 
    Moreover, this restricted action is now free, as promised in \cref{rem:not_free}. 
\end{remark}

\begin{example}
    The operads $\mathcal{O}^{\vee}(M)$ constructed from the intersection monoids in \cref{ex:intersection_monoids} are just (discrete versions of) the original operads.  That is, for every $n\geq 0$, the set $\mathcal{O}^\vee(\mathcal{K}(1))(n)$ is the underlying set of the Steiner operad space $\mathcal{K}(n)$, and similarly for the linear isometries operad. 
    In fact, $\mathcal{O}^{\vee}(\mathcal{K}(1))=\mathcal{K}$ and $\mathcal{O}^{\vee}(\mathcal{L}(1))=\mathcal{L}$. 
\end{example}

\subsubsection{Pairings of intersection monoids}
We conclude this section by refining the previous results in order to obtain pairings of operads from pairings of intersection monoids. With this method, we recover the well-known pairing of operads $(\cL,\cK)$, see \cref{cor:linearisometries_Steiner_pair_intersection_monoids}. 

\begin{definition}
\label{definition: pairing of intersection monoids}
    A \emph{pairing of intersection monoids} $(M,N)$ consists of two intersection monoids $M$ and $N$ together with a map $$\xi\colon M\times N\to N$$
    satisfying the following axioms:
 \begin{enumerate}
        \item (\emph{identity}) for any $m\in M$ and $n\in N$ we have $\xi(1_M,n)=n$ and $\xi(m,1_N)=1_N$, 
        \item (\emph{associativity}) for any $m_1,m_2\in M$ and $n\in N$ we have 
        	\[
				\xi(m_1,\xi(m_2,n))=\xi(m_1 \cdot m_2,n), 
			\]
        \item (\emph{distributivity}) for any $m\in M$ and $n_1,n_2\in N$ we have 
        \[
            \xi(m,n_1 \cdot n_2) = \xi(m,n_1)\cdot \xi(m,n_2), 
        \]
        \item (\emph{centrality}) for any  $m_1\vee m_2\in M$ and $n_1,n_2\in N$ we have
        	\[
				\xi(m_1,n_1) \cdot \xi(m_2,n_2)=\xi(m_2,n_2) \cdot \xi(m_1,n_1), 
			\]
        \item\label{item-pairing-int-monoids-disjunction} (\emph{disjunction}) if $n_1\vee n_2$ then for any $m\in M$ we have 
        	\[
				\xi(m,n_1)\vee \xi(m,n_2).
			\]
    \end{enumerate}
\end{definition}

\begin{remark}
Comparing \cref{definition: pairing of intersection monoids} with \cref{definition: pairing of monoids}, we see that conditions \labelcref{item-pairing-monoids-identity},  
\labelcref{item-pairing-monoids-associativity},
and \labelcref{item-pairing-monoids-distributivity} 
are exactly the same, whereas \labelcref{item-pairing-monoids-centrality} 
is weakened, and \labelcref{item-pairing-int-monoids-disjunction} 
is new. 
\end{remark}

\begin{remark}
    We write $\mathrm{End}_\vee(N)$ for the set of $\vee$-preserving endomorphisms of $N$.
    Then the above condition is equivalent to giving a monoid homomorphism $\tilde\xi: M \to \mathrm{End}_\vee(N)$ such that $\tilde\xi(m_1)(n_1)$ and $\tilde\xi(m_2)(n_2)$ commute in $N$ for all $n_1,n_2\in N$  whenever $m_1\vee m_2$. 
\end{remark}

In what follows, we present an example of this structure that will play a fundamental role for our purpose of realizing operad pairings. 

\begin{proposition}\label{lem:operad_pairing_linearisometries_Steiner}
There is a pairing of intersection monoids $(\cL(1),\cK(1),\xi)$, where the action map $\xi$ is the $1$-dimensional case of the pairing map $\lambda$ in the operad pairing $(\cL,\cK,\lambda)$ (see \cref{prop:operad_pairing_linearisometries_Steiner}). 
\end{proposition}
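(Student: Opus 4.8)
The strategy is to exploit that $(\cL,\cK,\lambda)$ is already an operad pairing (\cref{prop:operad_pairing_linearisometries_Steiner}): three of the five axioms of \cref{definition: pairing of intersection monoids} will follow by restricting the operad-pairing axioms of \cref{def:operad_pairing} to unary arities, and only the two genuinely new axioms---centrality and disjunction---will require a direct geometric argument. Recall that $\xi = \lambda\colon \cL(1)\times\cK(1)\to\cK(1)$ sends $(f,h)$ to the Steiner path $t\mapsto f\ast h(t)$, where for an embedding $e$ the map $f\ast e$ is the identity on $\im(f)^\perp$ and sends $v\mapsto f(e(f^{-1}(v)))$ on $\im(f)$; explicitly $(f\ast e)(v+x)=f(e(f^{-1}(v)))+x$ for $v\in\im(f)$ and $x\in\im(f)^\perp$. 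The monoid products on $\cL(1)$ and $\cK(1)$ are composition and pointwise composition, which agree with the unary operad structure maps $\mu$ and $\alpha$; their intersection relations are $f_1\vee f_2 \iff \im(f_1)\perp\im(f_2)$ and $h_1\vee h_2 \iff \im(h_1(0))\cap\im(h_2(0))=\emptyset$, as in \cref{ex:intersection_monoids}.

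First I would dispatch the identity, associativity, and distributivity axioms. Specializing to $k=j_1=i_{1,1}=1$ (so that the distributivity bijection $\nu$ is the identity), axiom \labelcref{action-id-P} of \cref{def:operad_pairing} yields $\xi(1_M,n)=n$; the unit axiom $\lambda(p;\id_{\cQ},\dots,\id_{\cQ})=\id_{\cQ}$ with $k=1$ yields $\xi(m,1_N)=1_N$; axiom \labelcref{def:operad_pairing_first_condition} yields associativity $\xi(m_1,\xi(m_2,n))=\xi(m_1\cdot m_2,n)$; and axiom \labelcref{def:operad_pairing_second_condition} yields distributivity $\xi(m,n_1\cdot n_2)=\xi(m,n_1)\cdot\xi(m,n_2)$. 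These are exactly the three axioms shared with \cref{definition: pairing of monoids}.

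For centrality, suppose $f_1\vee f_2$, so that $W_1\coloneqq\im(f_1)$ and $W_2\coloneqq\im(f_2)$ are orthogonal. For any embeddings $e_1,e_2$, the map $f_1\ast e_1$ maps $W_1$ into $W_1$ and fixes $W_1^\perp\supseteq W_2$ pointwise, while $f_2\ast e_2$ maps $W_2$ into $W_2$ and fixes $W_2^\perp\supseteq W_1$ pointwise. Decomposing a vector as $w_1+w_2+z$ with $w_i\in W_i$ and $z\in(W_1\oplus W_2)^\perp$, both composites send it to $(f_1\ast e_1)(w_1)+(f_2\ast e_2)(w_2)+z$, so the two maps commute; applying this pointwise in $t$ to the Steiner paths gives the required commutation. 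Note that we obtain commutativity only for disjoint $f_1,f_2$, consistent with $\cK(1)$ being noncommutative, which is precisely the weakened centrality of \cref{definition: pairing of intersection monoids}.

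For disjunction, I would compute $\im(f\ast e)=f(\im(e))+\im(f)^\perp$, using that $v$ and $x$ range independently over $\im(f)$ and $\im(f)^\perp$ while $f^{-1}(v)$ ranges over all of $\RR^\infty$. Given $n_1\vee n_2$, any common point of $\im(f\ast n_1(0))$ and $\im(f\ast n_2(0))$ decomposes uniquely along $\im(f)\oplus\im(f)^\perp$, forcing $f(e_1(u_1))=f(e_2(u_2))$ with $e_i=n_i(0)$; injectivity of $f$ then gives $e_1(u_1)=e_2(u_2)\in\im(n_1(0))\cap\im(n_2(0))=\emptyset$, a contradiction, so $\xi(f,n_1)\vee\xi(f,n_2)$. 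The only substantive part of the argument is centrality and disjunction, and both hinge on the single geometric observation that $f\ast e$ acts as the identity off $\im(f)$ and confines all nontrivial behavior to $\im(f)$; once this ``support'' description is in place, the two axioms reduce to elementary facts about orthogonal decompositions and injectivity of isometric embeddings.
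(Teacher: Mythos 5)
Your proof is correct, and on the two substantive axioms it follows essentially the same route as the paper's own argument: for centrality, the paper likewise uses that $\xi(f_1,h_1)(t)$ preserves $\im(f_1)$ and restricts to the identity on $\im(f_1)^\perp$, then commutes the two maps across the three-fold orthogonal decomposition $v = v_1 + v_2 + v_3$ (the paper explicitly flags that these maps are not linear but respect sums of an $\im(f_i)$-component with a complementary component, a point you absorb into your explicit formula $(f\ast e)(v+x) = f(e(f^{-1}(v))) + x$); for disjunction, the paper also works with the decomposition along $\im(f)\oplus\im(f)^\perp$ and injectivity of $f$, and your version is if anything worded more carefully than the paper's rather terse "the first two coordinates never agree." The one genuine divergence is your treatment of identity, associativity, and distributivity: the paper verifies these directly from the explicit formula for $\xi$ (declaring the check straightforward), whereas you import them by specializing the axioms of \cref{def:operad_pairing} for the known pairing of \cref{prop:operad_pairing_linearisometries_Steiner} to arity one. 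That specialization is valid --- the arity-one operad compositions are exactly the monoid products, the case $k=j_1=i_{1,1}=1$ of axiom \labelcref{def:operad_pairing_first_condition} is associativity, and the case $k=j_1=i_{1,1}=1$ of axiom \labelcref{def:operad_pairing_second_condition} (with trivial $\nu$) is distributivity --- and the proposition's statement does presuppose that pairing, so there is no circularity in proving the proposition itself. But note what it costs downstream: the paper advertises \cref{cor:linearisometries_Steiner_pair_intersection_monoids} as a \emph{new} proof of the May--Steiner pairing, and if the unary axioms are deduced from that classical pairing, the corollary ceases to be an independent derivation and instead merely bootstraps the full pairing from its unary restriction (still a nontrivial implication, but not a fresh proof). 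Verifying the three formal axioms directly from the formula for $f\ast h$, as the paper does, keeps the whole argument self-contained and preserves the intended logical architecture.
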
 

\begin{proof}
Let us explicitly recall the map $\xi\colon \cL(1)\times\cK(1)\to \cK(1)$. Given a linear isometry $f\in \cL(1)$ and a Steiner path $h\in\cK(1)$, the action is given by $\xi(f,h)=f\ast h$. Recall that $f\ast h$ is the Steiner path, where $(f\ast h)(t)$ is the composite 
\begin{center}
\begin{tikzcd}
	\RR^\infty\arrow[r,"="]&\im(f)\oplus \im(f)^\perp\ar[d,"f^{-1}\oplus \id"] & \im(f)\oplus \im(f)^\perp \arrow[r,"="]&\RR^\infty.\\ 
	&(\RR^\infty)\oplus \im(f)^\perp\arrow[r,"h(t)\oplus \id"] & (\RR^\infty)\oplus \im(f)^\perp \arrow[u,"f\oplus \id"]&
\end{tikzcd}
\end{center}

The verification of the identity, associativity, and distributivity axioms is straightforward. 
For the centrality axiom, let $f_1\vee f_2\in \cL(1)$, let $h_1,h_2\in \cK(1)$, let $\alpha= \xi(f_1,h_1)(t)$, and let $\beta = \xi(f_2,h_2)(t)$. Observe from the construction of the maps $\alpha$ and $\beta$ that we have the following: 
\begin{align*}
	\alpha(\im(f_1)) \subseteq \im(f_1)\text{ and } \alpha\vert_{\im(f_1)^\perp} = \id_{\im(f_1)^\perp},  \\ 
	\beta(\im(f_2)) \subseteq \im(f_2)\text{ and } \beta\vert_{\im(f_2)^\perp} = \id_{\im(f_2)^\perp}. 
\end{align*}
While $\beta$ (resp. $\alpha$) is not linear, note that it respects the addition of elements in $\im(f_1)$ (resp $\im(f_2)$) with elements in the complement. 

Since $f_1\vee f_2$, we have $\im(f_1)\perp\im(f_2)$ by definition. Hence, for $v\in \RR^\infty$ any vector, we can write it as a decomposition $v=v_1+v_2+v_3$ where $v_1\in \im(f_1)$, $v_2\in \im(f_2)$, and $v_3\in \im(f_1)^\perp\cap \im(f_2)^\perp$. Using the properties above of $\alpha$ and $\beta$, as well as $\im(f_1) \subseteq \im(f_2)^\perp$ and $\im(f_2) \subseteq \im(f_1)^\perp$, we have 
\begin{align*}
	\left(\alpha\circ \beta\right)(v) &= \left(\alpha\circ \beta\right)(v_1+v_2+v_3) \\ 
	&= \alpha\left(v_1 + \beta(v_2) + v_3\right) \\ 
	&= \alpha(v_1) + \beta(v_2) + v_3 \\ 
	&= \beta(\alpha(v_1) +v_2 + v_3) \\ 
	&= (\beta\circ\alpha)(v_1+v_2 + v_3) \\ 
	&= (\beta\circ \alpha)(v).
\end{align*} 
Thus, the centrality condition follows.

Finally, for the disjunction axiom, let $f\in \cL(1)$, let $h_1\vee h_2\in \cK(1)$, and let $x$ be an arbitrary point in $\mathbb{R}^{\infty}$. Under the decomposition $\mathbb{R}^{\infty}\cong \im(f)\oplus \im(f)^{\perp}$ we identify $x$ with a pair $(y,z)$. The maps $\xi(f,h_1)(0)$ and $\xi(f,h_2)(0)$ send the element $(y,z)$ to $(fh_1(0)f^{-1}y,z)$ and $(fh_2(0)f^{-1}y,z)$, respectively. Since the first two coordinates never agree, as $h_1\vee h_2$, these two elements are disjoint.
\end{proof}

\begin{theorem}\label{thm: operad pairing from compatible intersection monoids} Let $(M,N,\xi)$ be a pairing of intersection monoids. The family of maps
\[
	\lambda\colon \mathcal{O}(M)(k)\times \mathcal{O}(N)(j_1)\times \cdots \times \mathcal{O}(N)(j_k) \to \mathcal{O}(N)(j_{\times}), 
\]
of  \cref{thm: operad pairing from compatible monoids} restricts to give an operad pairing on the pair of suboperads $(\cO^\vee(M), \cO^\vee(N),\lambda)$. 
\end{theorem}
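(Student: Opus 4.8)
The plan is to keep the explicit formula for $\lambda$ from \cref{thm: operad pairing from compatible monoids} and check two things: that it actually takes values in the pairwise-disjoint subspaces $\cO^\vee(N)(j_\times)$, and that the pairing axioms of \cref{def:operad_pairing} continue to hold on the restricted domains. Note that a pairing of intersection monoids does \emph{not} give a pairing of the underlying monoids, since centrality is weakened, so one cannot literally restrict an operad pairing on $(\cO(M),\cO(N))$; instead one reruns the verification on $\cO^\vee$. The single structural observation driving everything is that an element $p\in\cO^\vee(M)(k)$ has pairwise-disjoint entries $p(1),\dots,p(k)$, so in any expression of the form $\prod_{r=1}^k \xi(p(r),-)$ two distinct factors carry $\vee$-related first arguments. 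This is exactly the hypothesis under which the weakened centrality of \cref{definition: pairing of intersection monoids} lets factors commute, so every appeal to centrality in the proof of \cref{thm: operad pairing from compatible monoids} remains valid here.

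First I would establish well-definedness, which is the genuinely new content, since the target has shrunk. Fix $p\in\cO^\vee(M)(k)$ and $x_r\in\cO^\vee(N)(j_r)$, and take distinct tuples $\bm{\ell}\neq\bm{\ell}'$ in $\underline{j_1}\times\cdots\times\underline{j_k}$. Choose a coordinate $s$ with $\ell_s\neq\ell_s'$; since the entries of $x_s$ are pairwise disjoint we have $x_s(\ell_s)\vee x_s(\ell_s')$, and the disjunction axiom \labelcref{item-pairing-int-monoids-disjunction} gives $\xi(p(s),x_s(\ell_s))\vee\xi(p(s),x_s(\ell_s'))$. Using weakened centrality against the factors indexed by $r\neq s$ (legitimate because $p(s)\vee p(r)$), I would pull the $s$-th factor to the front of both $\lambda(p;x_1,\dots,x_k)(\bm{\ell})$ and $\lambda(p;x_1,\dots,x_k)(\bm{\ell}')$, writing each component as a single disjoint factor times a remaining product. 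Axiom (b) of \cref{definition: intersection monoid}, applied in $N$, then upgrades disjointness of the $s$-th factors to disjointness of the full products, so the two components of the image tuple are disjoint and $\lambda$ lands in $\cO^\vee(N)(j_\times)$.

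Next I would verify the pairing axioms of \cref{def:operad_pairing}. The identity, unit, nullary, absorbing, and $\tau$-equivariance axioms, together with the first relation \labelcref{def:operad_pairing_first_condition}, were proved in \cref{thm: operad pairing from compatible monoids} using only the identity, associativity, and distributivity of $\xi$, all of which are unchanged in \cref{definition: pairing of intersection monoids}; these therefore carry over verbatim. The only axioms that used centrality are the $\sigma$-equivariance axiom \labelcref{action-sigma} and the relation \labelcref{def:operad_pairing_second_condition}. In \labelcref{def:operad_pairing_second_condition} the centrality step rearranges a product $\prod_{r=1}^k\left(\xi(p(r),x_r(\gamma_r(\bm{\ell})))\cdot\xi(p(r),x_{r,\gamma_r(\bm{\ell})}(\beta_r(\bm{\ell})))\right)$ into the product of two separate products, and this only ever commutes factors carrying distinct indices $r$; in \labelcref{action-sigma} the reordering of $\prod_r\xi(p(\sigma^{-1}(r)),-)$ induced by the permutation $\sigma$ likewise only commutes factors with distinct first argument $p(\cdot)$. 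In both cases those distinct arguments are disjoint because $p\in\cO^\vee(M)(k)$, so weakened centrality supplies precisely the commutations the earlier computation needed, and the proofs go through unchanged.

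I expect the well-definedness step to be the main obstacle, as it is the only place where the new structure enters and it requires combining three ingredients in the right order: the disjunction axiom to separate the relevant pair of factors, weakened centrality to isolate that factor at the front of each product, and axiom (b) of \cref{definition: intersection monoid} to propagate disjointness from a single factor to the whole product. Once this is set up, verifying the axioms is a bookkeeping matter of confirming that every centrality invocation in the proof of \cref{thm: operad pairing from compatible monoids} only ever commutes factors with distinct $p$-indices, which are disjoint on $\cO^\vee$.
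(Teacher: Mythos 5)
Your proposal is correct and takes essentially the same route as the paper's proof: the well-definedness step (pick a coordinate $s$ with $\ell_s\neq\ell_s'$, apply disjunction, commute the $s$-th factor forward using weakened centrality against the pairwise-disjoint entries of $p$, then use axiom (b) of \cref{definition: intersection monoid} to propagate disjointness to the full products) is exactly the paper's argument, and the paper likewise handles the pairing axioms by rerunning the proof of \cref{thm: operad pairing from compatible monoids} and observing that every centrality invocation only ever commutes factors with distinct, hence disjoint, $p$-indices. Your explicit remark that a pairing of intersection monoids does \emph{not} yield a pairing on $(\cO(M),\cO(N))$, so that the verification must be rerun rather than literally restricted, is a point the paper leaves implicit.
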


\begin{proof} 
Let us begin by setting some notation. Suppose that we are given a positive integer $k$, for all $1\leq r\leq k$ we fix positive integers $j_r$, and for all $1\leq q\leq j_r$ we fix a positive integer $i_{r,q}$. Suppose that we are given choices of elements
    \[
        p\in \mathcal{O}(M)(k),\quad p_r\in \cO(M)(j_r), \quad x_r\in \cO(N)(j_r), \quad x_{r,q}\in\cO(N)(i_{r,q})
    \]
    in our operads. 
    Recall that the family of maps 
\[
	\lambda\colon \mathcal{O}(M)(k)\times \mathcal{O}(N)(j_1)\times \cdots \times \mathcal{O}(N)(j_k) \to \mathcal{O}(N)(j_{\times})
\]
is defined on all $\bm{\ell}=(\ell_1,\ell_2,\dots,\ell_k)\in \underline{j_1} \times \cdots \times \underline{j_k}$ by
\[
    \lambda(p,x_1, \dots x_k)(\bm{\ell})= \prod_{r=1}^k\xi(p(r),x_r(\ell_r)).
\]

	Let us first justify that the map $\lambda$ restricts to the suboperads of \cref{def:operad_from_IM}.
	That is, when the input is in $\mathcal{O}^\vee(M)(k)\times \mathcal{O}^\vee(N)(j_1)\times \cdots \times \mathcal{O}^\vee(N)(j_k) $, then  \[\lambda(p,x_1,\dots,x_k)\in \cO^\vee(j_\times).\] 
	This amounts to showing that given another $\bm{\ell}^\prime\in \underline{j_\times}$
    with $\bm{\ell}\neq\bm{\ell}^\prime$ then
	\begin{align}\label{eq:needed_condition_for_welldefined}
	\lambda(p,x_1,x_2,\dots,x_k)(\bm{\ell})\vee \lambda(p,x_1,x_2,\dots,x_k)(\bm{\ell}^\prime).	
	\end{align}
	Since $\bm{\ell}\neq\bm{\ell}^\prime$, there exists $s\in\underline{k}$ such that $\ell_s\neq\ell_s^\prime$, and so $x_s(\ell_s)\vee x_s(\ell_s^\prime)$ as $x_s\in \cO(j_s)$. 
	From the disjunction condition, we then have 
    \[
        \xi(p(s),x_s(\ell_s))\vee\xi(p(s),x_s(\ell^\prime_s)).
    \]
	Using the centrality condition and the fact that disjointness is preserved under right multiplication, it follows that 
    \[
        \prod_{r=1}^k\xi(p(r),x_r(\ell_r))\vee \prod_{r=1}^k\xi(p(r),x_r(\ell_r^\prime)).
    \]
    Namely, we have shown \cref{eq:needed_condition_for_welldefined}. 
    Hence, the map $\lambda$ is well-defined. 
    
    To show that the structure maps provide a pairing of operads, we follow the proof of \cref{thm: operad pairing from compatible monoids}. Note that although we don't have a global centrality condition, what we have suffices to achieve the chain of equalities at the end of said proof.
\end{proof}

\begin{corollary}\label{cor:linearisometries_Steiner_pair_intersection_monoids} There is a pairing of operads between $\cL$ and $\cK$. 
\end{corollary}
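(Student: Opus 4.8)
The plan is to simply assemble the results established earlier in this subsection. First, \cref{lem:operad_pairing_linearisometries_Steiner} tells us that $(\cL(1),\cK(1),\xi)$ is a pairing of intersection monoids, where $\xi(f,h) = f\ast h$. Feeding this pairing into \cref{thm: operad pairing from compatible intersection monoids} then yields an operad pairing $(\cO^\vee(\cL(1)), \cO^\vee(\cK(1)), \lambda)$ in $(\Set,\times,\ast)$, with structure maps given levelwise by $\lambda(p,x_1,\dots,x_k)(\bm{\ell}) = \prod_{r=1}^k \xi(p(r), x_r(\ell_r))$ as in \cref{thm: operad pairing from compatible monoids}.

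Next I would invoke the identification noted in the example following \cref{def:operad_from_IM}: on underlying sets, $\cO^\vee(\cL(1)) = \cL$ and $\cO^\vee(\cK(1)) = \cK$. This already produces a pairing with the right underlying data, so all that remains is to match it with the topological operads $\cL$ and $\cK$. Here I would observe that, under the identifications above, the formula for $\lambda$ specializes to the assignment $(f;h_1,\dots,h_k)\mapsto \big(f\ast(h_{1,q_1},\dots,h_{k,q_k})\big)_{\bm{q}}$ from the proof sketch of \cref{prop:operad_pairing_linearisometries_Steiner}, using the lexicographic bookkeeping of \cref{notation:permutation in blocks}. Since that map is continuous, the set-level pairing upgrades to a pairing of the topological operads $\cL$ and $\cK$, completing the proof.

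The argument is essentially formal, so I do not expect a genuine obstacle. The only point requiring a small amount of care is verifying that the abstract pairing map produced by the intersection-monoid machinery coincides with the geometric pairing map of \cref{prop:operad_pairing_linearisometries_Steiner}; this is immediate once one unwinds the definition of $\xi$ as the star operation $f\ast h$ and checks that the block-and-lexicographic indexing conventions agree.
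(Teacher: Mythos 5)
Your proposal is correct and follows exactly the paper's argument, which proves the corollary by combining \cref{lem:operad_pairing_linearisometries_Steiner} with \cref{thm: operad pairing from compatible intersection monoids}. The extra steps you spell out---identifying $\cO^\vee(\cL(1)) = \cL$ and $\cO^\vee(\cK(1)) = \cK$ and checking that the resulting pairing map is the continuous one from \cref{prop:operad_pairing_linearisometries_Steiner}---are left implicit in the paper (via the example after \cref{def:operad_from_IM}), so your write-up is the same proof with those details made explicit.
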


\begin{proof} This follows directly from the combination of \cref{thm: operad pairing from compatible intersection monoids} and \cref{lem:operad_pairing_linearisometries_Steiner}.
\end{proof}

When $M$ and $N$ are $G$-monoids such that $(M,N,\xi)$ is a pairing of intersection monoids, then in view of \cref{rem:Gaction_monoids}, it is sufficient to ask that the action $\xi$ is compatible with the $G$-actions to obtain a pairing of operads in $\Set^G$. In particular, this is the case when $M$ is the $1$-space of the equivariant linear isometries operad and $N$ is the $1$-space of the equivariant Steiner operad. 

\begin{proposition}\label{proposition: equivariant LI steiner pair}
    For any finite group $G$ and any $G$-universe $U$, there is a pairing of operads in $\Set^G$ given by the equivariant linear isometry and Steiner operads $(\cL_G(U),\cK_G(U))$. 
\end{proposition}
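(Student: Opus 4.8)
The plan is to realize this pairing as a special case of the intersection-monoid machinery, now carried out $G$-equivariantly. First I would set $M \coloneqq \cL_G(U)(1) = \cJ(U,U)$ and $N \coloneqq \cK_G(U)(1)$, the monoid of Steiner paths in $U$, each a $G$-monoid under composition (respectively pointwise composition) with the conjugation action $g\cdot x = gxg^{-1}$; one checks directly that multiplication is $G$-equivariant and the unit is $G$-fixed. Exactly as in \cref{ex:intersection_monoids}, these are intersection monoids, with $f_1\vee f_2$ if and only if $\im(f_1)\perp \im(f_2)$ in $M$, and $h_1\vee h_2$ if and only if $\im(h_1(0))\cap \im(h_2(0)) = \emptyset$ in $N$. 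To promote them to intersection $G$-monoids in the sense of \cref{definition: intersection monoid} I must verify that conjugation preserves $\vee$; this is immediate because each $g\in G$ acts as a linear isometry of $U$, so $\im(g\cdot f) = g(\im f)$ and $g$ preserves both orthogonality and disjointness of images.

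The core of the argument is to check that the pairing map $\xi\colon M\times N\to N$ of \cref{lem:operad_pairing_linearisometries_Steiner} --- which makes sense verbatim with $U$ in place of $\RR^\infty$ --- is $G$-equivariant, that is, $\xi(g\cdot f, g\cdot h) = g\cdot \xi(f,h)$ for all $g\in G$. Recall that $(f\ast h)(t)$ acts on $v = v'+v''$, with $v'\in\im(f)$ and $v''\in\im(f)^\perp$, by $v\mapsto f\,h(t)\,f^{-1}(v') + v''$. The key observation is that since $g$ is an isometry, the orthogonal decomposition $U = \im(f)\oplus\im(f)^\perp$ is carried by $g$ onto $U = \im(g\cdot f)\oplus\im(g\cdot f)^\perp$; unwinding the composite defining $\big((g\cdot f)\ast(g\cdot h)\big)(t)$ and substituting $g\cdot f = gfg^{-1}$ and $g\cdot h = ghg^{-1}$ then collapses, via $g^{-1}g = \id$, to $g\,(f\ast h)(t)\,g^{-1}$. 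The five axioms of \cref{definition: pairing of intersection monoids} (identity, associativity, distributivity, centrality, disjunction) hold by the same computations as in the non-equivariant \cref{lem:operad_pairing_linearisometries_Steiner}, since none of them involves the $G$-action. Hence $(M,N,\xi)$ is a pairing of intersection $G$-monoids whose structure map is compatible with the $G$-actions.

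With this in hand, \cref{thm: operad pairing from compatible intersection monoids} together with the $G$-equivariant refinement recorded in \cref{rem:Gaction_monoids} produces a pairing of operads in $\Set^G$ on the pair $(\cO^\vee(M), \cO^\vee(N))$. It then remains only to identify these suboperads with the equivariant operads themselves: exactly as in the non-equivariant case, an element of $\cO^\vee(M)(n)$ is an $n$-tuple of linear isometric embeddings of $U$ with pairwise orthogonal images, which assembles into an element of $\cJ(U^{\oplus n},U) = \cL_G(U)(n)$, while an element of $\cO^\vee(N)(n)$ is an $n$-tuple of Steiner paths with pairwise disjoint images at time $0$, that is, an element of $\cK_G(U)(n)$; both identifications are $G$-equivariant isomorphisms of operads on underlying sets. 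I expect the $G$-equivariance of $\xi$ to be the only step requiring genuine care --- everything else is a direct transport of the non-equivariant results --- and the one subtlety to watch is that the $\vee$-relation and the orthogonal decomposition interact correctly with the conjugation action, which is precisely what the isometry property of each $g$ guarantees.
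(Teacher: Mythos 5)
Your proposal is correct and follows exactly the route the paper takes: the paper proves this proposition via the paragraph preceding it, which invokes \cref{rem:Gaction_monoids} and \cref{thm: operad pairing from compatible intersection monoids} applied to the $1$-spaces $\cL_G(U)(1)$ and $\cK_G(U)(1)$ with the conjugation $G$-action. You have merely made explicit the details the paper leaves implicit (the $G$-equivariance of $\xi$, that conjugation preserves the $\vee$-relations, and the identifications $\cO^\vee(\cL_G(U)(1)) \cong \cL_G(U)$ and $\cO^\vee(\cK_G(U)(1)) \cong \cK_G(U)$ on underlying $G$-sets), all of which check out.
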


\section{Pairs of \texorpdfstring{$N_\infty$}{N-infinity}-operads from compatible indexing systems}\label{sec:Ninfinity_from_indexing}

In this section we seek a partial converse to \cref{prop:operad-action-induces-compatible-pair}, relying on the techniques from the previous section. In \cref{subsec:constructing_Ninfinity_from_Setoperads} we construct $N_\infty$-operads from $G$-operads in $\Set$, which we use to prove the desired result in the case of a complete additive component in \cref{theorem:mainthm}. We finish by constructing new examples from known ones in \cref{subsec:realizable_pairs_from_known}.

\subsection{Constructing \texorpdfstring{$N_{\infty}$}{N-infinity}-operads from operads in \texorpdfstring{$\Set$}{Set}}\label{subsec:constructing_Ninfinity_from_Setoperads}

We recall a way of producing $N_{\infty}$-operads from certain operads in the category $(\Set,\times,\ast)$.  This idea, originally due to Rubin \cite{RubinCombinatorial}, amounts to the observation that every set produces a contractible space in a canonical way.  Moreover, this construction preserves products and is compatible with group actions, and hence sends operads in $\Set$ to operads in $\Top^G$ that under certain circumstances are $N_\infty$.  We now make these ideas precise.

\subsubsection{Coinduced operads}\label{section:coinduced_operads}

Given a subgroup $H\leq G$, it is often the case that one starts with an operad in the category $(\Set^H, \times, \ast)$ and uses it to produce a $G$-equivariant operad for a finite group $G$. For this, we recall the coinduction-forgetful adjunction
\begin{equation}\label{equation:coinduction_of_sets}
    \begin{tikzcd}[column sep = huge]
	{\Set^G} & {\Set^H,}
	\arrow[""{name=0, anchor=east, inner sep=0}, "{i^\ast_H}", bend left=30, from=1-1, to=1-2]
	\arrow[""{name=1, anchor=center, inner sep=0}, "{\Coind_H^G}", bend left=30, from=1-2, to=1-1]
	\arrow["\dashv"{anchor=center, rotate=-90}, draw=none, from=0, to=1]
\end{tikzcd}
\end{equation} 
where the \emph{coinduction functor} $\Coind_H^G$ is given by the assignment $X\mapsto \Set^H(G,X)$ with the action defined as $(g \cdot f)(h) \coloneqq f(hg)$. Note that coinduction preserves products, since it is a right adjoint, and therefore it sends operads in $(\Set^H, \times, \ast)$ to operads in $(\Set^G, \times, \ast)$. 

\begin{notation}\label{not:coinduced_operad}
	Given an operad $\cO$ in $(\Set^H, \times, \ast)$, we denote the coinduced operad in $(\Set^G, \times, \ast)$ by $\Coind_H^G\cO$.  Explicitly, the $n$-th space of this $G$-operad is $(\Coind_H^G\cO)(n) = \Coind_H^G(\cO(n)$). When $H=\set{e}$ we will write $\cO_G:=\Coind_{\set{e}}^G\cO$. 
\end{notation}

\subsubsection{Towards $N_\infty$-operads}

We have shown that with $(\Set,\times,\ast)$ as a starting point, we can construct operads in $\Set^G$. We now want to discuss how to naturally move towards $\Cat$, and how these constructions interact. 

\begin{definition}
	Let $S$ be a set. The \emph{chaotic category} generated by $S$ is the category $\widetilde{S}$ with object set $S$ and a unique morphism between any two objects.
\end{definition}

\begin{remark}\label{rem:Bchaotic_contractible}
	The canonical inclusion of any single object of $S$ into $\widetilde{S}$ is a categorical equivalence, and hence the classifying space of $\widetilde{S}$ is always contractible.
\end{remark}

We have an adjunction 
\begin{equation}\label{Adj-Cat-Set}
\begin{tikzcd}[column sep = huge]
	{\Cat} & {\Set,}
	\arrow[""{name=0, anchor=east, inner sep=0}, "{\mathrm{ob}}", bend left=30, from=1-1, to=1-2]
	\arrow[""{name=1, anchor=center, inner sep=0}, "{(\widetilde{-})}", bend left=30, from=1-2, to=1-1]
	\arrow["\dashv"{anchor=center, rotate=-90}, draw=none, from=0, to=1]
\end{tikzcd}
\end{equation}
where the forgetful functor $\textrm{ob}\colon \Cat \to \Set$ sends a small category to its set of objects, and $(\widetilde{-})\colon \Set\to\Cat$ is the chaotic category functor. 

We denote by $BG$ the delooping of $G$, that is, the category with one object whose set of automorophisms is $G$. Since the functor $(-)^G\coloneqq \mathrm{Fun}(BG, -)$ underlies a $2$-functor, we know it preserves adjunctions and thus we obtain an adjunction

\begin{equation}\label{Adj-GCat-GSet}
\begin{tikzcd}[column sep = huge]
	{\Cat^G} & {\Set^G.}
	\arrow[""{name=0, anchor=east, inner sep=0}, "{\mathrm{ob}}", bend left=30, from=1-1, to=1-2]
	\arrow[""{name=1, anchor=center, inner sep=0}, "{(\widetilde{-})}", bend left=30, from=1-2, to=1-1]
	\arrow["\dashv"{anchor=center, rotate=-90}, draw=none, from=0, to=1]
\end{tikzcd}
\end{equation}

Furthermore, we observe that the adjunction~\eqref{Adj-Cat-Set} lifts to 
the adjunction~\eqref{Adj-GCat-GSet} through the forgetful functors as below. 

\begin{center}

\begin{tikzcd}[column sep = huge]
	{\Cat} &  {\Set}
	\arrow[""{name=0, anchor=east, inner sep=0}, "{\mathrm{ob}}", bend left=30, from=1-1, to=1-2]
	\arrow[""{name=1, anchor=center, inner sep=0}, "{(\widetilde{-})}", bend left=30, from=1-2, to=1-1]
	\arrow["\dashv"{anchor=center, rotate=-90}, draw=none, from=0, to=1]\\
     &\  \\
    {\Cat^G}\arrow[uu, "U"]\arrow[""{name=2, anchor=east, inner sep=0}, "{\mathrm{ob}}", bend left=30, from=3-1, to=3-2]& {\Set^G}\arrow[uu, "U"', inner sep=0]\arrow[""{name=3, anchor=center, inner sep=0}, "{(\widetilde{-})}", bend left=30, from=3-2, to=3-1]
	\arrow["\dashv"{anchor=center, rotate=-90}, draw=none, from=2, to=3] 
\end{tikzcd}
\end{center}

That is, both the chaotic category construction and taking underlying objects commute with the forgetful functors. Explicitly, if $S$ is a set with a $G$-action, then $\widetilde{S}$ becomes a category with a $G$-action, where the action on objects is induced by the action on $S$. 

\begin{remark}\label{remark: fixed points of classifying space}
	Let $S$ be a set with a $G$-action and let $B\widetilde{S}$ be the classifying space of the chaotic category $\widetilde{S}$.  
    When the group $G$ is finite, we obtain homeomorphisms
	\[
	(B\widetilde{S})^H\cong B(\widetilde{S}^H)\cong B(\widetilde{S^H}). 
	\]
	The first isomorphism comes from the fact that taking fixed points with respect to any subgroup is a finite limit, and that taking classifying space commutes with finite limits. The second isomorphism comes from the fact that forming chaotic categories is a right adjoint. In particular, note that the fixed points of $B\widetilde{S}$ are always either empty or contractible, depending on whether $S^H$ is empty or not empty. 
\end{remark}

\begin{proposition}[{cf.~\cite[Proposition 3.5]{RubinCombinatorial}}]\label{proposition: chaotic Eoo operads}
	Let $\cO$ be an operad in $(\Set^G,\times,\ast)$. Then
    \begin{enumerate}
        \item\label{item: chaotic O operads} the collection $\widetilde{\cO} \coloneqq \{\widetilde{\cO(n)}\}$ is an operad in $(\Cat^G,\times,\ast)$, and
        \item\label{item: chaotic EO operads} the collection $E\cO\coloneqq\{B\widetilde{\cO(n)}\}$ is an operad in $(\Top^G,\times,\ast)$.
        \end{enumerate}
        Moreover, if the action of $\Sigma_n$ on $\cO(n)$ is free for all $n\geq 0$, then $E\cO$ is an $N_{\infty}$-operad. If $(\cO_1,\cO_2)$ is a pairing of operads in $\Set^G$ then $(E{\cO}_1,E{\cO}_2)$ is a pairing of operads in $\Top^G$.
\end{proposition}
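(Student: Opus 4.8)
The plan is to exhibit $E = B \circ \widetilde{(-)}\colon \Set^G \to \Top^G$ as a product-preserving functor and then to observe that product-preserving functors transport pairings of operads to pairings of operads. First I would record that $E$ preserves finite products and the terminal object: the chaotic category functor $\widetilde{(-)}$ is a right adjoint (to $\mathrm{ob}$), hence preserves all limits and in particular finite products, while the classifying space functor $B$ preserves finite products of small categories and sends the terminal category to a point. Composing, one obtains natural isomorphisms $E(X\times Y)\cong EX\times EY$ and $E(\ast)\cong \ast$ compatible with the $G$-actions, so $E$ is strong symmetric monoidal with respect to $(\times,\ast)$. This is precisely the structure already used to establish item \labelcref{item: chaotic EO operads}, so no new input is needed here.

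With this in hand, I would apply $E$ to each structure map $\lambda\colon \cO_1(k)\times \cO_2(j_1)\times\cdots\times\cO_2(j_k)\to \cO_2(j_\times)$ of the given pairing and, via the product-preserving isomorphisms, reinterpret the result as a map
\[
	E\lambda\colon E\cO_1(k)\times E\cO_2(j_1)\times\cdots\times E\cO_2(j_k)\to E\cO_2(j_\times).
\]
These are the candidate structure maps for the pairing $(E\cO_1, E\cO_2)$. It then remains to verify the axioms of \cref{def:operad_pairing}. Each such axiom is a commutative diagram assembled from the structure maps $\mu$, $\alpha$, $\lambda$, the identity elements, the symmetric group actions, the diagonals $\Delta$, and the distributivity bijection $\nu$. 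Applying the monoidal functor $E$ carries each such diagram to the corresponding diagram for $E\lambda$: products and the unit go to products and the unit by construction; images of the identity elements (maps out of $\ast$) supply the required identities; the $\Sigma_n$-actions are preserved since $E$ is a functor and these actions are given by maps in $\Set^G$; and the diagonals are preserved because $\Delta$ is characterized by the universal property of the product, which $E$ respects. Consequently every axiom diagram commutes after applying $E$, and $(E\cO_1, E\cO_2, E\lambda)$ is a pairing of operads in $\Top^G$.

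The only point requiring genuine care is the distributivity axiom \labelcref{def:operad_pairing_second_condition}, whose diagram additionally involves the bijection $\nu$ of \cref{subsection:distributivity bijection} together with iterated diagonals. The distributivity bijection is a specific rearrangement of a finite index set, realized as a symmetric group action on the relevant component of $\cO_2$; since $E$ commutes with these actions and with diagonals, $E$ sends the instance of $\nu$ for $\cO_2$ to the corresponding instance of $\nu$ for $E\cO_2$, and likewise for the $\Delta$'s. Thus the distributivity diagram is preserved as well. I expect this verification---confirming that $\nu$ and $\Delta$ are genuinely of a form preserved by a product-preserving functor---to be the main, though still routine, obstacle; once $E$ is known to be strong symmetric monoidal, the preservation of the pairing is formal.
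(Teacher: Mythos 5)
Your treatment of the pairing claim is correct and is the same route the paper takes: the paper's proof disposes of that clause in a single sentence, observing that the data of a pairing of operads is transported by strong monoidal functors and that $E = B\widetilde{(-)}$ is strong monoidal, which is exactly the argument you spell out. Your extra care about $\nu$ and $\Delta$ is sound and, if anything, more explicit than the paper: in axiom \labelcref{def:operad_pairing_second_condition} the bijection $\nu$ enters as the action of a specific permutation on a single level of $\cQ$, i.e.\ as a morphism in $\Set^G$, and $\Delta$ is the canonical map to a product, so both are preserved by any product-preserving functor. Likewise your derivation of items \labelcref{item: chaotic O operads} and \labelcref{item: chaotic EO operads} from right-adjointness of $\widetilde{(-)}$ and product-preservation of $B$ matches the paper.

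However, the proposal omits the clause ``if the $\Sigma_n$-action on $\cO(n)$ is free for all $n \geq 0$, then $E\cO$ is an $N_\infty$-operad,'' which is part of the proposition and genuinely requires input beyond monoidality. The paper's proof supplies this via \cref{remark: fixed points of classifying space}: for any subgroup $\Gamma \leq G \times \Sigma_n$ there is a homeomorphism $(B\widetilde{X})^\Gamma \cong B(\widetilde{X^\Gamma})$, because taking $\Gamma$-fixed points is a finite limit, $B$ commutes with finite limits, and the chaotic-category functor is a right adjoint. From this one reads off both remaining conditions of \cref{defn: Ninfinity operads}: every fixed-point space of $E\cO(n)$ is the classifying space of a chaotic category, hence empty or contractible; and $\Sigma_n$-freeness transfers, since for $\sigma \neq 1$ freeness of the action on $\cO(n)$ gives $\cO(n)^{\langle (1,\sigma)\rangle} = \emptyset$ and therefore $(E\cO(n))^{\langle (1,\sigma)\rangle} = \emptyset$. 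Nothing in your argument controls the equivariant homotopy type of $E\cO$; monoidality alone yields only the operad structures and the pairing, so this fixed-point computation is the missing step you would need to add. (One small shared gloss: condition (c) of \cref{defn: Ninfinity operads}, nonemptiness of the $G$-fixed points in each level, is not addressed by the paper's proof either, and holds in the paper's applications because the operads there are coinduced from nonempty levels.)
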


\begin{proof}
For \labelcref{item: chaotic O operads}, it is enough to recall that taking the chaotic category is a right adjoint, hence preserves products. Now \labelcref{item: chaotic EO operads} follows from \labelcref{item: chaotic O operads}, together with the fact that taking classifying spaces of categories preserves products as well.   
By \cref{remark: fixed points of classifying space}, if $\cO$ is $\Sigma$-free then $E\cO$ is also $\Sigma$-free. 
Applying \cref{remark: fixed points of classifying space} once again shows that all fixed point spaces are either empty or contractible, hence $E\cO$ is an $N_{\infty}$-operad when $\cO$ is $\Sigma$-free. 
To conclude the proof, we observe that the data of a pairing of operads is transported by strong monoidal functors, and that the composite functor $B\widetilde{(-)}$ is strong monoidal. 
\end{proof}

\begin{remark} 
By \cref{proposition: chaotic Eoo operads}, given a $\Sigma$-free operad $\cO$ in $(\Set,\times,\ast)$, its coinduced operad $\cO_G$ determines an $N_\infty$-operad $E\cO_G$ whose indexing system is the complete one. Given an intersection monoid $M$, since the operad $\cO^{\vee}(M)$ is $\Sigma$-free, we obtain an $N_{\infty}$-operad $E\cO^{\vee}_G(M)$ in $(\Top^G,\times, \ast)$ whose indexing system is the complete one. 
\end{remark}

We use this idea to build pairings of $N_\infty$-operads with prescribed indexing systems from pairings of intersection monoids.  

\begin{theorem}[{\cite[Theorem A]{szczesny2025realizingtransfersystemssuboperads}}]
\label{theorem: realization from monoids}
    Let $M$ be an intersection monoid with non-trivial intersection relation 
    and let $\cI$ be a $G$-indexing system. 
    The operad $E\cO^{\vee}_G(M)$ has a $N_\infty$-suboperad $E\cO^{\vee}_G(M)_{\cI}$ 
    whose associated indexing system is $\cI$. 
\end{theorem}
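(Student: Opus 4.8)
The plan is to reduce the statement to a set-level construction and then feed it into \cref{proposition: chaotic Eoo operads}. By that proposition together with \cref{remark: fixed points of classifying space}, it suffices to produce a $\Sigma$-free sub-$G$-operad $\cP$ of the coinduced operad $\cO^\vee_G(M)$ in $(\Set^G,\times,\ast)$ with the following detection property: for every subgroup $H\le G$ and every $H$-set $X$ of size $n$ with graph subgroup $\Gamma(X)\le G\times\Sigma_n$ (\cref{def:graph_subgroup}), the fixed set $\cP(n)^{\Gamma(X)}$ is non-empty if and only if $X\in\cI(H)$. Indeed, setting $E\cO^\vee_G(M)_{\cI}\coloneqq E\cP$, \cref{proposition: chaotic Eoo operads} makes this an $N_\infty$-operad (it is $\Sigma$-free as a subobject of a $\Sigma$-free operad), and by \cref{remark: fixed points of classifying space} the space $(E\cP(n))^{\Gamma(X)}\cong B\widetilde{\cP(n)^{\Gamma(X)}}$ is contractible exactly when $\cP(n)^{\Gamma(X)}$ is non-empty and empty otherwise; hence the indexing system of $E\cP$ is precisely $\cI$.

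To define $\cP$ I would use the \emph{symmetry type} of a point. Since $\cO^\vee_G(M)(n)=\Set(G,\cO^\vee(M)(n))$ is $\Sigma_n$-free (coinduction preserves the free $\Sigma_n$-action coming from \cref{def:operad_from_IM}), the stabilizer $\Stab(f)\le G\times\Sigma_n$ of any $f$ meets $1\times\Sigma_n$ trivially and is therefore a graph subgroup $\{(g,\psi_f(g))\mid g\in L_f\}$ for a unique $L_f\le G$ and homomorphism $\psi_f\colon L_f\to\Sigma_n$; this records an $L_f$-set structure $Y_f$ on $\underline{n}$. I would then set
\[
	\cP(n)\coloneqq\{\,f\in\cO^\vee_G(M)(n)\mid Y_f\in\cI(L_f)\,\}.
\]
Because passing to a conjugate of $\Stab(f)$ conjugates the type, and $\cI$ is closed under the conjugation and restriction functors, $\cP$ is stable under the $G$- and $\Sigma_n$-actions; it is $\Sigma$-free since $\cO^\vee_G(M)$ is.

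For the detection property, the ``if'' direction is where the intersection structure enters. Given $X\in\cI(H)$ of size $n$, a $\Gamma(X)$-fixed point of $\cO^\vee_G(M)(n)$ is the same as a function $F\colon G\times\underline{n}\to M$ that is constant on the orbits of the free $H$-action $h\cdot(g,i)=(h^{-1}g,\varphi(h)(i))$, subject to pairwise disjointness of $F(g,1),\dots,F(g,n)$ for each fixed $g$. Since $M$ has a non-trivial intersection relation it contains arbitrarily large families of pairwise disjoint elements \cite[Lemma 3.2]{szczesny2025realizingtransfersystemssuboperads}; assigning distinct such elements to the finitely many distinct orbits produces an $F$ whose per-$g$ disjointness is automatic (coordinates $(g,i)\neq(g,j)$ lie in distinct orbits) and whose stabilizer is exactly $\Gamma(X)$, since a larger stabilizer would force two distinct orbits to receive a common value, which is impossible as disjoint elements are distinct. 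This $f$ has $Y_f=X\in\cI(H)$, so $f\in\cP(n)^{\Gamma(X)}$. Conversely, if $f\in\cP(n)^{\Gamma(X)}$ then $\Gamma(X)\le\Stab(f)$, whence $H\le L_f$ and $X=\Res^{L_f}_H Y_f$; as $Y_f\in\cI(L_f)$ and $\cI$ is closed under restriction and subobjects, $X\in\cI(H)$.

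The main obstacle is verifying that $\cP$ is closed under operad composition. Given $f\in\cP(k)$ and $g_r\in\cP(k_r)$ for $1\le r\le k$, the composite $\gamma(f;g_1,\dots,g_k)$ is computed pointwise in the coinduced operad, with $(r,q)$-coordinate $f(\cdot)_r\cdot g_r(\cdot)_q$, and the axioms of \cref{definition: intersection monoid} guarantee these coordinates remain pairwise disjoint, so the composite lies in $\cO^\vee_G(M)(k_+)$. What must be shown is that its symmetry type is again admissible. I would analyze $\Stab(\gamma(f;g_1,\dots,g_k))$ and identify the associated $\underline{k_+}$-set as a subobject of a disjoint union, over the orbits of $Y_f$, of self-inductions of the types $Y_{g_r}$ along the subgroups occurring in $Y_f$ — exactly the combination controlled by closure under self-induction, disjoint unions, conjugation, and subobjects in the definition of an indexing system. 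This is the same interplay between operadic composition and indexing-system closure that underlies \cref{proposition: admissible sets are indexing system}, and carrying it out carefully, in particular bounding the stabilizer of the composite so that no inadmissible symmetry is created, is the technical heart of the argument.
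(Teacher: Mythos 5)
First, a point of comparison: the paper does not prove this statement at all --- it is imported verbatim as \cite[Theorem A]{szczesny2025realizingtransfersystemssuboperads} --- so your attempt can only be measured against the cited construction and against correctness. Your framing is right and matches the paper's surrounding infrastructure: by \cref{proposition: chaotic Eoo operads} and \cref{remark: fixed points of classifying space} it suffices to find a $\Sigma$-free sub-$G$-operad $\cP\subseteq\cO^{\vee}_G(M)$ in $\Set^G$ whose graph-subgroup fixed points are non-empty exactly on $\cI$; your observation that any such $\cP$ must consist entirely of elements whose stabilizer type is admissible is correct, and your construction of a $\Gamma(X)$-fixed point with stabilizer exactly $\Gamma(X)$ (one disjoint value per orbit of the free $H$-action on $G\times\underline{n}$) is sound and uses the non-trivial intersection relation in the intended way.

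The genuine gap is the step you deferred, and it is not merely technical: your $\cP(n)=\{f\mid Y_f\in\cI(L_f)\}$ is in general \emph{not} closed under operadic composition, because multiplication in an arbitrary intersection monoid can create coincidences $mn=m'n'$ with $(m,n)\neq(m',n')$ that manufacture new symmetry in a composite. Concretely, take $G=C_2=\{e,t\}$, let $\cI=\cI_0$ be the trivial indexing system, and let $M$ be the quotient of the free monoid on $m_0,m_1,n_1,n_2,n_3,n_4$ by the relations $m_1n_3=m_0n_2$ and $m_1n_4=m_0n_1$ (a confluent length-preserving rewriting system), with $\vee$ the closure of $\{\,n_1\vee n_2,\ n_3\vee n_4\,\}$ under axioms (b) and (c) of \cref{definition: intersection monoid}; a normal-form check shows $x\vee x$ never holds, so $M$ is an intersection monoid with non-trivial relation. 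Define $f\in\cO^{\vee}_G(M)(1)$ by $f(e)=m_0$, $f(t)=m_1$ and $g\in\cO^{\vee}_G(M)(2)$ by $g(e)=(n_1,n_2)$, $g(t)=(n_3,n_4)$. Both have trivial stabilizer, so both lie in your $\cP$. But the composite $c=\gamma(f;g)$ satisfies $c(e)=(m_0n_1,m_0n_2)$ and $c(t)=(m_1n_3,m_1n_4)=(m_0n_2,m_0n_1)$, so $\Stab(c)$ contains the graph of the nontrivial homomorphism $C_2\to\Sigma_2$ and the type of $c$ is the free $C_2$-set $C_2/e\notin\cI_0(C_2)$. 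This simultaneously refutes the bound you sketched (the type of a composite being a subobject of a disjoint union of self-inductions of the $Y_{g_r}$ over the orbits of $Y_f$: here all component data have trivial isotropy, yet the composite acquires $C_2$-symmetry). Since any valid realizing suboperad must sit inside your admissible locus, the fix cannot be a more careful verification of closure for your $\cP$; one must take a strictly smaller suboperad, e.g.\ one generated by fixed points whose values are words in a single fixed pairwise disjoint family, where distinct words of equal length are forced to be disjoint by axioms (b) and (c) and hence no accidental coincidences of products can occur --- which is essentially the shape of the cited construction. In short, the part you labeled the ``technical heart'' is exactly where the proposal, as stated, fails.
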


\begin{corollary}
\label{theorem:mainthm}
    Let $\cI$ be an indexing system of $G$ and let $(M,N)$ be a pairing of intersection monoids. 
    Then we have a pairing of operads $(E\cO^{\vee}_G(M)_{\cI},E\cO^{\vee}_G(N))$ whose associated compatible pair of indexing systems is $(\cI,\cI_c)$, where $\cI_c$ is the complete indexing system of \cref{example: example of indexing systems 1}~\labelcref{indexing system 1}.
\end{corollary}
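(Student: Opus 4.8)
The plan is to assemble the result from the machinery of this section, the only genuinely new input being the passage to the multiplicative suboperad. First I would feed the given pairing of intersection monoids $(M,N,\xi)$ into \cref{thm: operad pairing from compatible intersection monoids} to obtain an operad pairing $(\cO^{\vee}(M),\cO^{\vee}(N),\lambda)$ in $(\Set,\times,\ast)$. The next step is to transport this pairing into $(\Top^G,\times,\ast)$ along the composite functor $E\bigl((-)_G\bigr)=B\widetilde{(-)}\circ\mathrm{Coind}$. Coinduction is a right adjoint, hence preserves finite products and is therefore strong monoidal, and $B\widetilde{(-)}$ is strong monoidal by the argument in the proof of \cref{proposition: chaotic Eoo operads}; since the data of an operad pairing is transported by strong monoidal functors, this produces a pairing $(E\cO^{\vee}_G(M),E\cO^{\vee}_G(N),\lambda)$ in $\Top^G$. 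By the remark following \cref{theorem: realization from monoids}, both $E\cO^{\vee}_G(M)$ and $E\cO^{\vee}_G(N)$ are $N_\infty$-operads, each with the complete indexing system.

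Next I would restrict the multiplicative factor. By \cref{theorem: realization from monoids}, $E\cO^{\vee}_G(M)$ contains an $N_\infty$-suboperad $E\cO^{\vee}_G(M)_{\cI}$ whose associated indexing system is $\cI$. The key observation is that restricting the multiplicative operad $\cP$ of an operad pairing to any suboperad $\cP'$ yields again an operad pairing with the very same map $\lambda$: inspecting the axioms of \cref{def:operad_pairing}, each occurrence of a $\cP$-element enters only through the structure map $\mu$, the unit $\mathrm{id}_{\cP}$, or the $\Sigma$-action, and a suboperad is closed under all three; since the additive operad is left untouched, every identity continues to hold once the $\cP$-inputs are drawn from $\cP'$, and the restricted $\lambda$ still lands in $E\cO^{\vee}_G(N)(j_\times)$. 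Applying this with $\cP'=E\cO^{\vee}_G(M)_{\cI}$ gives the desired pairing $(E\cO^{\vee}_G(M)_{\cI},E\cO^{\vee}_G(N),\lambda)$.

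Finally I would identify the indexing systems and confirm compatibility. The indexing system of $E\cO^{\vee}_G(M)_{\cI}$ is $\cI$ by \cref{theorem: realization from monoids}, and that of the unrestricted $E\cO^{\vee}_G(N)$ is the complete indexing system $\cI_c$ of \cref{example: example of indexing systems 1}\,\labelcref{indexing system 1}, as recorded above. That $(\cI,\cI_c)$ is a compatible pair is then immediate from \cref{prop:operad-action-induces-compatible-pair}, which guarantees that any operad pairing induces a compatible pair of indexing systems (alternatively, one may cite directly that a complete additive component is compatible with any multiplicative component).

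I do not expect a serious obstacle: the argument is essentially an assembly of prior results, and the single verification deserving care is the suboperad-restriction claim of the second paragraph, which is routine. The one hypothesis to watch is that invoking \cref{theorem: realization from monoids} and its remark requires $M$ to have a non-trivial intersection relation (and likewise $N$, so that the fixed-point non-emptiness condition of an $N_\infty$-operad holds on the additive side); this should be made explicit as a standing assumption on the pairing of intersection monoids.
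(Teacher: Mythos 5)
Your proposal is correct and follows essentially the same route as the paper: apply \cref{thm: operad pairing from compatible intersection monoids} to get the pairing $(\cO^{\vee}(M),\cO^{\vee}(N))$ in $\Set$, transport it along the strong monoidal composite of coinduction and $B\widetilde{(-)}$ as in \cref{proposition: chaotic Eoo operads}, and restrict the multiplicative factor to the suboperad $E\cO^{\vee}_G(M)_{\cI}$ of \cref{theorem: realization from monoids}; your explicit verification that restricting $\cP$ to a suboperad preserves a pairing is the step the paper leaves tacit, and it is correct. Your closing observation that the non-trivial intersection relation on $M$ (and on $N$, so that $\cO^{\vee}(N)(n)$ is non-empty for all $n$) is implicitly required is also a fair catch, as the paper's statement omits this hypothesis even though \cref{theorem: realization from monoids} demands it.
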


\begin{proof}
	If $(M,N)$ is a pairing of intersection monoids, then by \cref{thm: operad pairing from compatible intersection monoids} we have the operad pairing $(\mathcal{O}^{\vee}(M),\mathcal{O}^{\vee}(N))$. Since coinduction is a right adjoint, we immediately get that $(E\mathcal{O}^{\vee}_G(M),E\mathcal{O}^{\vee}_G(N))$ is a pairing of $G$-operads. The result follows by restricting the pairing to the suboperad $E\cO_G(M)_\cI$ of \cref{theorem: realization from monoids}. 
\end{proof}

\subsection{Constructing new pairs from known cases}\label{subsec:realizable_pairs_from_known}
We now conclude the paper by presenting several ways of constructing new pairings of operads from known pairings of operads. 

\begin{definition}
We say that a compatible pair of indexing systems $(\mathcal{I}_1,\mathcal{I}_2)$ is \emph{realizable} if there exists a pairing of $N_{\infty}$-operads $(\cO_1,\cO_2)$ such that the indexing systems associated to $\cO_1$ and $\cO_2$ are $\cI_1$ and $\cI_2$, respectively. 
\end{definition}

 For instance, \cref{theorem:mainthm} implies that every compatible pair of indexing systems $(\cI_1,\cI_2)$ where $\cI_2$ is complete is realizable.

\begin{example}\label{example of realizable pairs 1}
    For any $G$-universe $U$, the linear isometries and Steiner operads are compatible by \cref{proposition: equivariant LI steiner pair}, and thus determine realizable indexing systems $(\cI_{\cL_G(U)},\cI_{\cK_G(U)})$.  Examples of indexing systems realized by these operads for some cyclic groups, $\Sigma_3$, and $Q_8$, are given in \cite{RubinDetecting}, although they are given in the language of transfer systems.
\end{example}

\begin{example}\label{example of realizable pairs 2}
     Let $f\colon G\to H$ be any group homomorphism and let $\cO$ be any $H$-$N_{\infty}$-operad.  Restricting along $f$ yields a $G$-$N_{\infty}$-operad $f^{*}\cO$.  Since restriction preserves products, any pairing of $H$-$N_{\infty}$-operads $(\cO_1,\cO_2)$ yields a pairing of $G$-$N_{\infty}$-operads $(f^*\cO_1,f^*\cO_2)$. 
\end{example}

In the remainder of this section we give some tools for approaching \cref{conjecture: realizability}. 
Specifically, we describe techniques which allow us to show that certain classes of compatible pairs of indexing systems are realizable. While we are not able to prove the conjecture, we produce several new families of examples. 

The first tool we use is the Cartesian product of operads. 
\begin{lemma}\label{lemma:products_of_pairs_are_pairs}
    Let $(\cP_1,\cQ_1,\lambda_1)$ and $(\cP_2,\cQ_2,\lambda_2)$ be pairings of $G$-operads. Then there is a pairing of $G$-operads between $\cP_1\times \cP_2$ and $\cQ_1\times \cQ_2$, with structure map induced by $\lambda_1 \times \lambda_2$. 
\end{lemma}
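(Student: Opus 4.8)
The plan is to define the pairing structure map coordinatewise and then verify each of the eight axioms of \cref{def:operad_pairing} by reducing it, one factor at a time, to the corresponding axiom for $\lambda_1$ and for $\lambda_2$. First I would recall that the levelwise Cartesian products $(\cP_1\times\cP_2)(n)\coloneqq\cP_1(n)\times\cP_2(n)$ and $(\cQ_1\times\cQ_2)(n)\coloneqq\cQ_1(n)\times\cQ_2(n)$ assemble into $G$-operads, with unit, composition, $\Sigma_n$-action, and $G$-action all inherited coordinatewise; the only input needed is the shuffle isomorphism $(A_1\times A_2)\times(B_1\times B_2)\cong(A_1\times B_1)\times(A_2\times B_2)$ coming from the symmetry of the Cartesian monoidal structure. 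I would then define the pairing map on elements by
\[
	\lambda\big((p_1,p_2);(x_1,y_1),\dots,(x_k,y_k)\big)\coloneqq\big(\lambda_1(p_1;x_1,\dots,x_k),\,\lambda_2(p_2;y_1,\dots,y_k)\big),
\]
where $p_i\in\cP_i(k)$, $x_r\in\cQ_1(j_r)$, and $y_r\in\cQ_2(j_r)$; this is exactly the composite of the relevant shuffle isomorphism with $\lambda_1\times\lambda_2$, so it is the map induced by $\lambda_1\times\lambda_2$ as claimed in the statement.

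The heart of the argument is the observation that every axiom in \cref{def:operad_pairing} is an equality of two elements of some level $(\cQ_1\times\cQ_2)(N)=\cQ_1(N)\times\cQ_2(N)$, and that under the coordinatewise definitions the first coordinate of each side of such an equality depends only on the data $(\cP_1,\cQ_1,\lambda_1)$ while the second depends only on $(\cP_2,\cQ_2,\lambda_2)$. Consequently each axiom for $\lambda$ decomposes into the same axiom for $\lambda_1$ and for $\lambda_2$, both of which hold by hypothesis. The one point deserving care is that the combinatorial data appearing in the axioms---the distributivity bijection $\nu$ of \cref{subsection:distributivity bijection} and the block permutations $\sigma_\times(j_1,\dots,j_k)$ and $\tau_1\otimes\cdots\otimes\tau_k$ of \cref{notation:permutation in blocks}---depend only on the integers $k,j_1,\dots,j_k$ and $i_{r,q}$, and not on the operads. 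Since the $\Sigma_N$-actions on $(\cQ_1\times\cQ_2)(N)$ are diagonal, these permutations act identically in both coordinates, so the symmetry axiom \ref{action-sigma}, its companion axiom (f), and the distributivity axiom \ref{def:operad_pairing_second_condition} factor through coordinatewise exactly as the simpler axioms do.

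Finally I would check $G$-equivariance: the $G$-action on each product operad is coordinatewise, and $\lambda_1,\lambda_2$ are $G$-equivariant by hypothesis, so $\lambda$ is as well (cf.\ \cref{rem:Gaction_monoids}). The main obstacle here is not conceptual but organizational: one must confirm that the shuffle isomorphisms identifying the sources of the two sides of each axiom are compatible with the coordinatewise splitting, and in particular that $\nu$ genuinely acts diagonally across the two factors. Once these identifications are pinned down, each verification is immediate from the corresponding property of $\lambda_1$ and $\lambda_2$, and the lemma follows.
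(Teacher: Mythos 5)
Your proposal is correct and follows essentially the same route as the paper: defining the pairing as the shuffle isomorphism composed with $\lambda_1\times\lambda_2$ and verifying each axiom coordinatewise, reducing to the hypotheses on $\lambda_1$ and $\lambda_2$. Your additional observation that the combinatorial data ($\nu$ and the block permutations) depends only on the arities and hence acts diagonally is exactly the point the paper leaves implicit when it says it suffices to check the compatibility in the $\cQ_1$ and $\cQ_2$ components separately.
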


\begin{proof}
    The pairing maps are given by
    \begin{align*}
        (\cP_1\times \cP_2)(k)\times \prod\limits_{i=1}^k(\cQ_1\times \cQ_2)(j_i) & \cong \left( \cP_1(k)\times \prod\limits_{i=1}^k\cQ_1(j_i)\right)\times \left(\cP_2(k)\times \prod\limits_{i=1}^k \cQ_2(j_i)\right)\\
        & \xrightarrow{\lambda_1\times \lambda_2} \cQ_1(j_\times)\times \cQ_2(j_\times)\\
        & = (\cQ_1\times \cQ_2)(j_{\times})
    \end{align*}
    To check the required compatibility, note that it suffices to check it in the $\cQ_1$ and $\cQ_2$ components separately.  But this amounts to observing that $\lambda_1$ and $\lambda_2$ are pairings, which is true by assumption.
\end{proof}

Recall that the collection of indexing systems of a finite group $G$ forms a poset under inclusion.  In fact, this poset is a finite lattice with meets ($\wedge$) and joins ($\vee$). The meet structure can be understood at the level of operads.  Precisely, through the equivalence $\mathrm{Ho}(N_{\infty})\cong \mathrm{Index}(G)$ of \cref{theorem: indexing systems are operads}, the meet in the lattice structure of indexing systems corresponds to the product of $N_{\infty}$-operads, see \cite[Proposition 5.1]{BlumbergHillOperadic}. 

\begin{corollary}\label{corollary: realizability is closed under meets}
    If $(\cI_1,\cI_2)$ and $(\cI_1',\cI_2')$ are realizable pairs of indexing systems then $(\cI_1\wedge \cI_1',\cI_2\wedge \cI_2')$ is also realizable.
\end{corollary}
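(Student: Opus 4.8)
The plan is to reduce the statement to the product construction for operad pairings together with the known identification of meets of indexing systems with products of $N_\infty$-operads. Since $(\cI_1,\cI_2)$ and $(\cI_1',\cI_2')$ are realizable, I would first fix pairings of $N_\infty$-operads $(\cO_1,\cO_2,\lambda)$ and $(\cO_1',\cO_2',\lambda')$ whose associated indexing systems are $\cI_1,\cI_2$ and $\cI_1',\cI_2'$, respectively.

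Next, I would apply \cref{lemma:products_of_pairs_are_pairs} to these two pairings to obtain a pairing of $G$-operads $(\cO_1\times\cO_1',\cO_2\times\cO_2')$ with structure map induced by $\lambda\times\lambda'$. The remaining content is to check that the two factors are again $N_\infty$-operads and that they realize the claimed meets. For the former, I would verify the three conditions of \cref{defn: Ninfinity operads} directly: the $\Sigma_n$-action on a product $\cO(n)\times\cO'(n)$ is free because each factor is free; for any subgroup $\Gamma\leq G\times\Sigma_n$ there is a homeomorphism $(\cO(n)\times\cO'(n))^{\Gamma}\cong \cO(n)^{\Gamma}\times\cO'(n)^{\Gamma}$, which is a product of spaces that are each empty or contractible and hence again empty or contractible; and the $G$-fixed points are non-empty since both factors have non-empty $G$-fixed points. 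Thus both $\cO_1\times\cO_1'$ and $\cO_2\times\cO_2'$ are $N_\infty$-operads.

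Finally, I would invoke the identification recalled just before this corollary, namely that under the equivalence $\mathrm{Ho}(N_{\infty})\simeq\mathrm{Index}(G)$ of \cref{theorem: indexing systems are operads} the product of $N_\infty$-operads corresponds to the meet of the associated indexing systems \cite[Proposition 5.1]{BlumbergHillOperadic}. Applying this to each factor shows that $\cO_1\times\cO_1'$ has associated indexing system $\cI_1\wedge\cI_1'$ and $\cO_2\times\cO_2'$ has associated indexing system $\cI_2\wedge\cI_2'$. The pairing produced above therefore realizes $(\cI_1\wedge\cI_1',\cI_2\wedge\cI_2')$, as desired.

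I do not anticipate a genuine obstacle here: every step is either a direct appeal to \cref{lemma:products_of_pairs_are_pairs} or to the cited computation of meets, and the verification that products preserve the $N_\infty$ conditions is formal, since taking fixed points commutes with products. The only point requiring a little care is to confirm that the \emph{same} product operad simultaneously carries the pairing (from \cref{lemma:products_of_pairs_are_pairs}) and realizes the meet (from the cited identification); but since both statements refer literally to the Cartesian product of the underlying $G$-operads, there is no compatibility to reconcile.
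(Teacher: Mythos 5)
Your proof is correct and takes essentially the same route as the paper: the paper's proof is precisely ``immediate from \cref{lemma:products_of_pairs_are_pairs} and the identification of products of $N_\infty$-operads with meets of indexing systems (\cite[Proposition 5.1]{BlumbergHillOperadic})'', and your explicit check that the product of two $N_\infty$-operads again satisfies the conditions of \cref{defn: Ninfinity operads} is a detail the paper delegates to that citation. Note also that your conclusion $(\cI_1\wedge\cI_1',\cI_2\wedge\cI_2')$ silently corrects an evident typo in the printed statement, which reads $\cI_1\wedge\cI_2'$ in the first coordinate.
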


\begin{proof}
    Immediate from the discussion in the preceeding paragraph and \cref{lemma:products_of_pairs_are_pairs}. 
\end{proof}

If $(\cI_1,\cI_2)$ and $(\cI_1',\cI_2)$ are compatible pairs of indexing systems, then Blumberg--Hill showed that $(\cI_1\vee \cI_1',\cI_2)$ is also a compatible pair, see \cite[Proposition 7.84]{BlumbergHillBiincomplete}.  In particular, this implies that for any indexing system $\cI$ there is a unique largest indexing system $\mathrm{Hull}(\cI)$ such that $(\mathrm{Hull}(\cI),\cI)$ is compatible.  This indexing system is called the \emph{multiplicative hull} of $\cI$.

\begin{proposition}
    If $(\mathrm{Hull}(\cI),\cI)$ is realizable and $(\cI',\cI)$ 
    is a compatible pair of indexing systems, 
    then $(\cI',\cI)$ is realizable. 
\end{proposition}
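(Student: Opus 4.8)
The plan is to exhibit $(\cI',\cI)$ as the componentwise meet of two pairs that are already known to be realizable, and then apply \cref{corollary: realizability is closed under meets}. The two ingredient pairs I would use are $(\mathrm{Hull}(\cI),\cI)$, which is realizable by hypothesis, and $(\cI',\cI_c)$, where $\cI_c$ is the complete indexing system of \cref{example: example of indexing systems 1}.

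First I would verify that $(\cI',\cI_c)$ is realizable. Since $\cI_c$ is the maximal indexing system, any pair with complete additive component is compatible, so $(\cI',\cI_c)$ is a compatible pair; and by \cref{theorem:mainthm} every compatible pair whose additive component is complete is realizable. Hence $(\cI',\cI_c)$ is realizable.

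Next I would identify the componentwise meet of the two pairs. On the multiplicative side, because $(\cI',\cI)$ is a compatible pair and $\mathrm{Hull}(\cI)$ is by definition the largest indexing system $\cJ$ for which $(\cJ,\cI)$ is compatible, we have $\cI'\leq \mathrm{Hull}(\cI)$ and therefore $\mathrm{Hull}(\cI)\wedge \cI' = \cI'$. On the additive side, $\cI \leq \cI_c$ since $\cI_c$ is maximal, so $\cI \wedge \cI_c = \cI$. Thus the componentwise meet of $(\mathrm{Hull}(\cI),\cI)$ and $(\cI',\cI_c)$ is precisely $(\cI',\cI)$.

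Applying \cref{corollary: realizability is closed under meets} to these two realizable pairs then shows that $(\cI',\cI)$ is realizable, which completes the argument. I expect no serious obstacle here: the statement is essentially a formal consequence of closure of realizability under meets together with the realizability of pairs with complete additive component. The only point demanding any care is the inequality $\cI' \leq \mathrm{Hull}(\cI)$, but this is immediate from the universal property defining the multiplicative hull.
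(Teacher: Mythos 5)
Your proposal is correct and follows exactly the paper's own argument: the paper likewise realizes $(\cI',\cI_c)$ via \cref{theorem:mainthm} and then applies \cref{corollary: realizability is closed under meets} to the pairs $(\mathrm{Hull}(\cI),\cI)$ and $(\cI',\cI_c)$. The only difference is that you spell out the meet computation $\mathrm{Hull}(\cI)\wedge\cI'=\cI'$ and $\cI\wedge\cI_c=\cI$ (using $\cI'\leq\mathrm{Hull}(\cI)$ from the universal property of the hull), which the paper leaves implicit.
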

\begin{proof}
    Let $\cI_c$ denote the complete indexing system.  By \cref{theorem:mainthm}, the pair $(\cI',\cI_c)$ is realizable.  Applying \cref{corollary: realizability is closed under meets} to the realizable pairs $(\mathrm{Hull}(\cI),\cI)$ and $(\cI',\cI_c)$ yields the claim.
\end{proof}

\begin{corollary}
    \label{conjecture reduced}
    The statement of \cref{conjecture: realizability} is equivalent to the claim that $(\mathrm{Hull}(\cI),\cI)$ is realizable for any $G$-indexing system $\cI$.
\end{corollary}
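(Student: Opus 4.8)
The plan is to prove the biconditional by establishing each implication separately, observing that both directions are essentially formal once the preceding results are in hand. The substantive work has already been isolated into the immediately preceding proposition, so the corollary amounts to organizing the quantifiers correctly.

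For the forward implication, I would assume \cref{conjecture: realizability} and deduce the reduced claim. Fix any $G$-indexing system $\cI$. By the definition of the multiplicative hull, $\mathrm{Hull}(\cI)$ is the largest indexing system for which $(\mathrm{Hull}(\cI),\cI)$ is a compatible pair; in particular $(\mathrm{Hull}(\cI),\cI)$ \emph{is} a compatible pair. Since the conjecture asserts that every compatible pair is realizable, $(\mathrm{Hull}(\cI),\cI)$ is realizable. This direction needs no input beyond the definition of $\mathrm{Hull}$.

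For the reverse implication, I would assume that $(\mathrm{Hull}(\cI),\cI)$ is realizable for every $G$-indexing system $\cI$, and recover the conjecture. Fix a finite group $G$ and an arbitrary compatible pair of $G$-indexing systems $(\cI',\cI)$. By hypothesis $(\mathrm{Hull}(\cI),\cI)$ is realizable, so the immediately preceding proposition (whose hypotheses are exactly the realizability of $(\mathrm{Hull}(\cI),\cI)$ together with compatibility of $(\cI',\cI)$) applies and yields that $(\cI',\cI)$ is realizable. As $(\cI',\cI)$ was arbitrary, every compatible pair is realizable, which is the conjecture after translating between indexing systems and transfer systems via \cref{thm: indexing and transfer systems are the same}.

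I do not anticipate a genuine obstacle, since the real content lives in the preceding proposition (resting on \cref{theorem:mainthm} and \cref{corollary: realizability is closed under meets}). The only point demanding care is the handling of quantifiers: the conjecture ranges over all groups and \emph{all} compatible pairs, whereas the reduced claim ranges only over single indexing systems $\cI$ paired with their hulls. The reverse direction nonetheless covers an arbitrary second coordinate $\cI'$ precisely because the preceding proposition is stated with $\cI'$ arbitrary, so no generality is lost.
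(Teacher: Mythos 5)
Your proposal is correct and matches the paper's (implicit) argument exactly: the paper leaves the corollary unproved precisely because, as you say, the forward direction is immediate from the definition of $\mathrm{Hull}(\cI)$ as giving a compatible pair, and the reverse direction is a direct application of the immediately preceding proposition to an arbitrary compatible pair $(\cI',\cI)$. Your attention to the quantifier structure and to the translation between transfer systems and indexing systems via \cref{thm: indexing and transfer systems are the same} is appropriate and consistent with the paper's conventions.
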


The last construction to build new pairings from old ones is with coinduction from \cref{section:coinduced_operads}. 

\begin{lemma}\label{lemma coinduced operad pairs}
    Let $H\leq G$ be a subgroup and let  $(\cO_1,\cO_2)$ be a pairing of $H$-$N_\infty$-operads. Then the coinduced operads \((\Coind_H^G\cO_1,\Coind_H^G\cO_2) \) are a pairing of $G$-$N_\infty$-operads.
\end{lemma}
\begin{proof}
    The forgetful-coinduction adjunction between sets and $G$-sets in \labelcref{equation:coinduction_of_sets} extends to an adjunction between spaces and $G$-spaces, and this, in turn, gives us the coinduction functor $\Coind_H^G$ from operads in $\Top^H$ to $\Top^G$.  Blumberg-Hill prove  that this coinduction functor will further restrict to $N_\infty$-operads \cite[Proposition 6.15]{BlumbergHillOperadic}. Since the coinduction functor on spaces is a right adjoint, it preserves products, and therefore pairings of operads.
\end{proof}

Given an $H$-$N_\infty$-operad $\cO$, let us write $\to_\cO$ for the corresponding transfer system. We wish to be able to determine the transfer system $\to_{\Coind_H^G\cO}$ from $\to_\cO$.
Before we do this, let us recall some elementary properties related to $G$-sets.
\begin{property}\label{prop:fixed_points_map}
    For a subgroup $H\leq G$, and $G$-set $Y$, there is a bijection \[\Set^G(G/H,Y) \cong Y^H\] given by $f\mapsto f(eH)$. 
\end{property}
For an $(H\times\Sigma_n)$-set $X$, the coinduced $G$-set $\Set^{H}(G,X)$ is a $G\times \Sigma_n$-set where the action is given by \[((g,\sigma)\cdot f)(g^\prime)=\sigma\cdot f(g^\prime g).\] 
\begin{property}\label{prop:coinduced_equiv}
    There is an isomorphism of $G\times \Sigma_n$-sets \[
    \Set^{H\times\Sigma_n}(G\times\Sigma_n,X) \cong \Set^{H}(G,X)
    \] given by $f\mapsto i^\ast_Gf$ where $i_G\colon G\to G\times \Sigma_n$ is the inclusion $i_G(g)=(g,1)$. 
\end{property}

In what follows, we will use the conjugate subgroup notation $H^{g}:=g^{-1}Hg$. The following property is the well-known Mackey double coset formula. Recall that $i_H^\ast$ is the restriction functor from $G$-sets to $H$-sets.
\begin{property}\label{prop:mackey_formula}
    Suppose we have subgroups $H,K\leq G$. There is an isomorphism of $H$-sets \[i^\ast_H\left(G/K\right)\cong \coprod_{g\in [K\backslash G/H]}H/(H\cap K^g).\] Here $g\in [K\backslash G/H]$ signifies we are indexing over some choice of double coset representatives.
\end{property}

We can use these properties to get a useful description of graph subgroup fixed points associated with the coinduced sets $\Set^{H}(G, X)$ when $X$ is a $(H\times\Sigma_n)$-set.

\begin{lemma}[\protect{cf. \cite[Proposition 6.16]{BlumbergHillOperadic}}]\label{lem:coinduced_fixed_points}
    Let $H\leq G$, and $X$ be a left $H\times \Sigma_n$-set. For a graph subgroup $\Gamma \leq G\times \Sigma_n$ corresponding to a homomorphism $K\to \Sigma_n$, there is a bijection \[
    \left(\Set^{H}(G, X)\right)^\Gamma \cong \prod_{g\in [K\backslash G /H]}X^{(H\times \Sigma_n) \cap \Gamma^{(g,1)} } .
    \]
\end{lemma}
\begin{proof}
Consider the following sequence of isomorphisms$\colon$
    \begin{align*}
        \left(\Set^{H}(G, X)\right)^\Gamma &\cong \left(\Set^{H\times \Sigma_n}(G\times \Sigma_n, X)\right)^\Gamma \\ 
        &\cong \Set^{G\times\Sigma_n}\left((G\times \Sigma_n)/\Gamma, \Set^{H\times \Sigma_n}(G\times \Sigma_n, X)\right) \\ 
        &\cong \Set^{H\times \Sigma_n}(i^*_{H\times \Sigma_n}((G\times \Sigma_n)/\Gamma), X).
    \end{align*}
    Here, the first two isomorphisms are \cref{prop:fixed_points_map,prop:coinduced_equiv}, and the third isomorphism is the restriction-coinduction adjunction.

    We decompose the left $(H\times \Sigma_n)$-set $i^*_{H\times \Sigma_n}(G\times \Sigma_n/\Gamma)$ using the Mackey double coset formula (\cref{prop:mackey_formula}). Note that we may pick double coset representatives for $\Gamma\backslash (G\times \Sigma_n)/(H\times\Sigma_n)$ that have the form $(g,1)$, since we can always multiply on the right by elements of the form $(1,\sigma)$. Thus, we have isomorphisms:

    \begin{align*}
        \left(\Set^{H}(G, X)\right)^\Gamma &\cong \Set^{H\times \Sigma_n}(i^*_{H\times \Sigma_n}(G\times \Sigma_n)/\Gamma, X) \\
        &\cong \prod_{(g,1)\in [\Gamma \backslash (G\times\Sigma_n) / (H\times\Sigma_n)]}\Set^{H\times \Sigma_n}((H\times\Sigma_n) /((H\times \Sigma_n)\cap \Gamma^{(g,1)}),X) \\ &\cong \prod_{(g,1)\in [\Gamma \backslash (G\times\Sigma_n) / (H\times\Sigma_n)]}X^{(H\times \Sigma_n) \cap \Gamma^{(g,1)}}.
    \end{align*}

    The lemma then follows by observing that if $\set{(g_k,1)}$ is a complete system of double coset representatives of $\Gamma \backslash (G\times\Sigma_n) / (H\times\Sigma_n)$, then $\set{g_k}$ is a complete system of double coset representatives of $K\backslash G /H$.
\end{proof}

\begin{definition}\label{def coinduced transfer system}
    Given a subgroup $H\leq G$ and a $H$-transfer system $\to$, we define the \emph{$G$-coinduced transfer system} $\to^G$ as follows. We write $L \to^G K$ whenever $L \leq K \leq G$ and for all $g \in G$ there exists a transfer
\[
    H \cap L^{g} \;\to\; H \cap K^{g}.
\]
\end{definition}

Both the name of $\to^G$, and the fact that it is a well-defined $G$-transfer system come from the following proposition.

\begin{proposition}
 Let $H\leq G$, and suppose that we have an $H$-$N_\infty$-operad $\cO$ with associated transfer system $\to_\cO^G$. Then the relations $\to_{\Coind_H^G\cO}$ and $\to_\cO^G$ are the same.
\end{proposition}

\begin{proof}
     There is a transfer $L\to_{\Coind_H^G\cO} K$ if and only if $\Coind_H^G\cO(n)^{\Gamma}$ is non-empty, where $\Gamma\subset G\times \Sigma_n$ is the graph subgroup corresponding to some choice and order of coset representatives for the $K$-set $K/L$ (where $n=\vert K/L\vert$). From \cref{lem:coinduced_fixed_points}, we deduce that 
     \[\Coind_H^G\cO(n)^{\Gamma}=\left(\Set^H(G,\cO(n))\right)^\Gamma\]
     is non-empty if and only if for all $g\in G$, the set $\cO(n)^{(H\times \Sigma_n)\cap\Gamma^{(g,1)}}$ is non-empty. Notice that $(H\times \Sigma_n)\cap\Gamma^{(g,1)}$ is itself a graph subgroup corresponding to the $(H\cap K^g)$-set $i^\ast_{(H\cap K^g)}(K^g/L^g)$. As a consequence, $\cO(n)^{(H\times \Sigma_n)\cap\Gamma^{(g,1)}}$ is non-empty if and only if 
     \begin{equation}\label{equation:restricted_big_condition}
         i^\ast_{(H\cap K^g)}(K^g/L^g)\in\cI_\cO(H\cap K^g).
     \end{equation}
     The double coset formula (\cref{prop:mackey_formula}) gives us a decomposition of $(H\cap K^g)$-sets 
     \[
         i^\ast_{(H\cap K^g)}(K^g/L^g) \cong \coprod_{x\in[L^g\backslash K^g/(H\cap K^g)]}(H\cap K^g)/(H\cap L^{gx}).
     \] 
     Since indexing systems are closed under disjoint unions and subobjects (\cref{definition:indexing_systems}), we conclude that \labelcref{equation:restricted_big_condition} holds if and only if for all $x\in K^g$ that \[
     (H\cap K^g)/(H\cap L^{gx})\in \cI_\cO(H\cap K^g),
     \]
     which is equivalent to $H\cap L^{gx}\to_\cO H\cap K^g$. Tracing our chain of equivalences, we have shown that $L\to_{\Coind_H^G\cO} K$ if and only for all $g\in G$ and $x\in K^g$, that $H\cap L^{gx}\to_\cO H\cap K^g$. The lemma then follows as $K^{gx}=(K^g)^x=K^g$ for all $x\in K^g$.

\end{proof}

\begin{example}
    Recall from \cref{example:basic_transfer_systems}, that given any non-trivial subgroup $J\leq G$, there are two $J$-transfer systems: the complete $J$-transfer system $\to_c^J$ and the trivial $J$-transfer system $\to_0^J$. The coinduced $G$-transfer system $(\to_c^J)^G$ is just the complete $G$-transfer system $\to^G_c$, and we don't get anything new in this case. 
    In contrast, the coinduced $G$-transfer system $(\to_0^J)^G=: \;\to_J$ from the trivial $J$-transfer system is usually non-trivial. 
    Moreover, it has a particularly nice description. For any $K\leq H\leq G$ we have $K\to_J H$ if and only if for all $g\in G$ it holds that $J^g\cap H\leq K$. To see this, we have $K\to_J H$ if and only if for all $g\in G$ we have $K^{g}\cap J \to_0^J H^g\cap J$. Since $\to_0^J$ is the trivial $J$-transfer system, we know that $K^{g}\cap J \to_0^J H^g\cap J$ if and only if the equality $K^{g}\cap J = H^g\cap J$ holds, which is true if and only if $H^g\cap J \leq K^{g}$ since $K^{g}\leq H^g$.  We call $\to_J$ the \emph{$J$-local transfer system}.
\end{example}

    We end with an example which describes the realizability of compatible pairs of transfer systems for the group $G = \Sigma_3$. Recall from \cref{def compatible pair transfer system} the definition of a compatible pair of transfer systems.

    \begin{example}
        \cref{table: S3 transfer systems} displays all transfer systems $\cT$ for the symmetric group $\Sigma_3$. The table also displays $\mathrm{Hull}(\cT)$ and whether the pair $(\mathrm{Hull}(\cT),\cT)$ is realizable using our methods.  Rows $1$, $5$, $8$, and $9$ are realizable as Steiner and linear isometries pairs, see \cite[Examples 4.9 and 5.11]{RubinDetecting}.  Row $2$ is realizable by applying \cref{corollary: realizability is closed under meets} to rows $5$ and $8$.  Finally, row $3$ is realizable since it is the $H_1$-local transfer system for $\Sigma_3$.  Using this, one can check that at least 24 out of 36 compatible pairs are realizable.  We note that if one could show that row $6$ was realizable, then we would immediately obtain that row $7$ is realizable, by applying \cref{corollary: realizability is closed under meets} to rows $6$ and $8$.  Thus the two outstanding cases are rows $4$ and $6$. 
    \end{example}

\begin{table}[ht!]
    \begin{tabular}{ccccc}
        \toprule
         & & $\cT$ & \hspace*{1cm} $\Hull(\cT)$ \hspace*{1cm} & Realizable? 
            \\ \midrule
        1 & &\sthreetriv & \sthreetriv & $\checkmark$
            \\[1cm]
        2 & &\sthreea & \sthreea & $\checkmark$
            \\[1cm]
        3 & &\sthreeb & \sthreeb & $\checkmark$
            \\[1cm]
        4 & &\sthreeab & \sthreeab & ?
            \\[1cm]
        5 & &\sthreec & \sthreec & $\checkmark$
            \\[1cm]
        6 & &\sthreeabc & \sthreeab & ?
            \\[1cm]
        7 & &\sthreed & \sthreeab & ?
            \\[1cm]
        8 & &\sthreee & \sthreeab & $\checkmark$
            \\[1cm]
        9 & &\sthreeall & \sthreeall & $\checkmark$
            \\ \bottomrule
    \end{tabular}
    \medskip
    \caption{An enumeration of all $\Sigma_3$-transfer systems $\cT$, their hulls $\Hull(\cT)$, and whether or not the pair $(\Hull(\cT), \cT)$ is realizable using our methods. Here, $H_1$, $H_2$, and $H_3$ are the three conjugate copies of $C_2$ inside $\Sigma_3$.}
    \label{table: S3 transfer systems}
\end{table}

\clearpage

\bibliographystyle{alpha}
\bibliography{references}

@misc{BS2025,
      title={The Formal Context of Saturated Transfer Systems on Finite Abelian Groups}, 
      author={Seth Bernstein and Ben Spitz},
      year={2025},
      eprint={2511.02982},
      archivePrefix={arXiv},
      primaryClass={math.CO},
      url={https://arxiv.org/abs/2511.02982}, 
}

@book {BoardmanVogt,
    AUTHOR = {Boardman, J. M. and Vogt, R. M.},
     TITLE = {Homotopy invariant algebraic structures on topological spaces},
    SERIES = {Lecture Notes in Mathematics},
    VOLUME = {Vol. 347},
 PUBLISHER = {Springer-Verlag, Berlin-New York},
      YEAR = {1973},
     PAGES = {x+257},
}

@incollection {Igusa,
    AUTHOR = {Igusa, Kiyoshi},
     TITLE = {On the algebraic {$K$}-theory of {$A\sb{\infty }$}-ring
              spaces},
 BOOKTITLE = {Algebraic {$K$}-theory, {P}art {II} ({O}berwolfach, 1980)},
    SERIES = {Lecture Notes in Math.},
    VOLUME = {967},
     PAGES = {146--194},
 PUBLISHER = {Springer, Berlin},
      YEAR = {1982},
      ISBN = {3-540-11966-3},
       DOI = {10.1007/BFb0061902},
       URL = {https://doi.org/10.1007/BFb0061902},
}

@incollection {Keller,
    AUTHOR = {Keller, Bernhard},
     TITLE = {{$A$}-infinity algebras in representation theory},
 BOOKTITLE = {Representations of algebra. {V}ol. {I}, {II}},
     PAGES = {74--86},
 PUBLISHER = {Beijing Norm. Univ. Press, Beijing},
      YEAR = {2002},
      ISBN = {7-303-06113-4},
   MRCLASS = {16E45 (16E30 16S37 18E30 18G40)},
  MRNUMBER = {2067371},
}

@book {Fresse1,
    AUTHOR = {Fresse, Benoit},
     TITLE = {Homotopy of operads and {G}rothendieck-{T}eichm\"uller groups.
              {P}art 1},
    SERIES = {Mathematical Surveys and Monographs},
    VOLUME = {217},
      NOTE = {The algebraic theory and its topological background},
 PUBLISHER = {American Mathematical Society, Providence, RI},
      YEAR = {2017},
     PAGES = {xlvi+532},
      ISBN = {978-1-4704-3481-6},
       DOI = {10.1090/surv/217.1},
       URL = {https://doi.org/10.1090/surv/217.1},
}

@book {Fresse2,
    AUTHOR = {Fresse, Benoit},
     TITLE = {Homotopy of operads and {G}rothendieck-{T}eichm\"uller groups.
              {P}art 2},
    SERIES = {Mathematical Surveys and Monographs},
    VOLUME = {217},
      NOTE = {The applications of (rational) homotopy theory methods},
 PUBLISHER = {American Mathematical Society, Providence, RI},
      YEAR = {2017},
     PAGES = {xxxv+704},
      ISBN = {978-1-4704-3482-3},
       DOI = {10.1090/surv/217.2},
       URL = {https://doi.org/10.1090/surv/217.2},
}

@incollection {Beck,
    AUTHOR = {Beck, Jon},
     TITLE = {Distributive laws},
 BOOKTITLE = {Sem. on {T}riples and {C}ategorical {H}omology {T}heory
              ({ETH}, {Z}\"urich, 1966/67)},
    SERIES = {Lecture Notes in Math.},
    VOLUME = {No. 80},
     PAGES = {119--140},
 PUBLISHER = {Springer, Berlin-New York},
      YEAR = {1969},
   MRCLASS = {18.20},
  MRNUMBER = {241502},
MRREVIEWER = {J.\ R.\ Isbell},
}

@article{BlumbergHillOperadic,
	author = {Blumberg, Andrew J. and Hill, Michael A.},
	doi = {10.1016/j.aim.2015.07.013},
	fjournal = {Advances in Mathematics},
	issn = {0001-8708,1090-2082},
	journal = {Adv. Math.},
	mrclass = {55P91 (18D50 55P43 55P48)},
	mrnumber = {3406512},
	mrreviewer = {Markus\ Szymik},
	pages = {658--708},
	title = {Operadic multiplications in equivariant spectra, norms, and transfers},
	url = {https://doi.org/10.1016/j.aim.2015.07.013},
	volume = {285},
	year = {2015},
	bdsk-url-1 = {https://doi.org/10.1016/j.aim.2015.07.013}}

@incollection{BlumbergHillBiincomplete,
	author = {Blumberg, Andrew J. and Hill, Michael A.},
	booktitle = {Equivariant topology and derived algebra},
	isbn = {978-1-108-93194-6},
	mrclass = {18B99 (20J15)},
	mrnumber = {4327103},
	pages = {276--313},
	publisher = {Cambridge Univ. Press, Cambridge},
	series = {London Math. Soc. Lecture Note Ser.},
	title = {Bi-incomplete {T}ambara functors},
	volume = {474},
	year = {2022}}

@article{BonventrePereira,
	author = {Bonventre, Peter and Pereira, Lu\'is A.},
	doi = {10.1016/j.aim.2020.107502},
	fjournal = {Advances in Mathematics},
	issn = {0001-8708,1090-2082},
	journal = {Adv. Math.},
	mrclass = {18M60 (55P48)},
	mrnumber = {4205708},
	mrreviewer = {Lada\ Peksov\'a},
	pages = {Paper No. 107502, 133},
	title = {Genuine equivariant operads},
	url = {https://doi.org/10.1016/j.aim.2020.107502},
	volume = {381},
	year = {2021},
	bdsk-url-1 = {https://doi.org/10.1016/j.aim.2020.107502}}

@article{ChanBiincomplete,
	author = {Chan, David},
	doi = {10.2140/tunis.2024.6.1},
	fjournal = {Tunisian Journal of Mathematics},
	issn = {2576-7658,2576-7666},
	journal = {Tunis. J. Math.},
	mrclass = {55P91 (18M05 55N91)},
	mrnumber = {4696086},
	number = {1},
	pages = {1--47},
	title = {Bi-incomplete {T}ambara functors as {$\mathcal{O}$}-commutative monoids},
	url = {https://doi.org/10.2140/tunis.2024.6.1},
	volume = {6},
	year = {2024},
	bdsk-url-1 = {https://doi.org/10.2140/tunis.2024.6.1}}

@book {EKMM,
    AUTHOR = {Elmendorf, A. D. and Kriz, I. and Mandell, M. A. and May, J.
              P.},
     TITLE = {Rings, modules, and algebras in stable homotopy theory},
    SERIES = {Mathematical Surveys and Monographs},
    VOLUME = {47},
      NOTE = {With an appendix by M. Cole},
 PUBLISHER = {American Mathematical Society, Providence, RI},
      YEAR = {1997},
     PAGES = {xii+249},
      ISBN = {0-8218-0638-6},
   MRCLASS = {55N20 (19D10 19D55 55P42 55T25)},
  MRNUMBER = {1417719},
MRREVIEWER = {Donald\ M.\ Davis},
       DOI = {10.1090/surv/047},
       URL = {https://doi.org/10.1090/surv/047},
}

@article{GutierrezWhite,
	author = {Guti\'errez, Javier J. and White, David},
	doi = {10.2140/agt.2018.18.2919},
	fjournal = {Algebraic \& Geometric Topology},
	issn = {1472-2747,1472-2739},
	journal = {Algebr. Geom. Topol.},
	mrclass = {55P42 (55P48 55P60 55P91 55U35)},
	mrnumber = {3848404},
	mrreviewer = {Steffen\ Sagave},
	number = {5},
	pages = {2919--2962},
	title = {Encoding equivariant commutativity via operads},
	url = {https://doi.org/10.2140/agt.2018.18.2919},
	volume = {18},
	year = {2018},
	bdsk-url-1 = {https://doi.org/10.2140/agt.2018.18.2919}}

@article{RubinDetecting,
	author = {Rubin, Jonathan},
	doi = {10.1017/S001708952000021X},
	fjournal = {Glasgow Mathematical Journal},
	issn = {0017-0895,1469-509X},
	journal = {Glasg. Math. J.},
	mrclass = {55P91 (55P48)},
	mrnumber = {4244201},
	mrreviewer = {Ben\ C.\ Walter},
	number = {2},
	pages = {307--342},
	title = {Detecting {S}teiner and linear isometries operads},
	url = {https://doi.org/10.1017/S001708952000021X},
	volume = {63},
	year = {2021},
	bdsk-url-1 = {https://doi.org/10.1017/S001708952000021X}}

@article{RubinCombinatorial,
	author = {Rubin, Jonathan},
	doi = {10.2140/agt.2021.21.3513},
	fjournal = {Algebraic \& Geometric Topology},
	issn = {1472-2747,1472-2739},
	journal = {Algebr. Geom. Topol.},
	mrclass = {55P48 (55P91)},
	mrnumber = {4357612},
	mrreviewer = {Jonathan\ Beardsley},
	number = {7},
	pages = {3513--3568},
	title = {Combinatorial {$N_\infty$}-operads},
	url = {https://doi.org/10.2140/agt.2021.21.3513},
	volume = {21},
	year = {2021},
	bdsk-url-1 = {https://doi.org/10.2140/agt.2021.21.3513}}

@book{MayEringBook,
	author = {May, J. Peter},
	mrclass = {55D35},
	mrnumber = {494077},
	mrreviewer = {J.\ Stasheff},
	note = {With contributions by Frank Quinn, Nigel Ray, and J\o rgen Tornehave},
	pages = {268},
	publisher = {Springer-Verlag, Berlin-New York},
	series = {Lecture Notes in Mathematics},
	title = {{$E\sb{\infty }$} ring spaces and {$E\sb{\infty }$} ring spectra},
	volume = {Vol. 577},
	year = {1977}}

@incollection{MayEringPaper,
	author = {May, J. Peter},
	booktitle = {New topological contexts for {G}alois theory and algebraic geometry ({BIRS} 2008)},
	doi = {10.2140/gtm.2009.16.215},
	mrclass = {55P43 (18C20 18D50 55P42 55P47)},
	mrnumber = {2544391},
	mrreviewer = {Tyler\ D.\ Lawson},
	pages = {215--282},
	publisher = {Geom. Topol. Publ., Coventry},
	series = {Geom. Topol. Monogr.},
	title = {What precisely are {$E_\infty$} ring spaces and {$E_\infty$} ring spectra?},
	url = {https://doi.org/10.2140/gtm.2009.16.215},
	volume = {16},
	year = {2009},
	bdsk-url-1 = {https://doi.org/10.2140/gtm.2009.16.215}}

@book{May:Geometry,
	author = {May, J. Peter},
	mrclass = {55D35},
	mrnumber = {420610},
	mrreviewer = {J.\ Stasheff},
	pages = {viii+175},
	publisher = {Springer-Verlag, Berlin-New York},
	series = {Lecture Notes in Mathematics},
	title = {The geometry of iterated loop spaces},
	volume = {Vol. 271},
	year = {1972}}

@article{Steiner,
	author = {Steiner, Richard},
	doi = {10.1017/S0305004100056292},
	fjournal = {Mathematical Proceedings of the Cambridge Philosophical Society},
	issn = {0305-0041,1469-8064},
	journal = {Math. Proc. Cambridge Philos. Soc.},
	mrclass = {55P47},
	mrnumber = {542690},
	mrreviewer = {M.\ Mahowald},
	number = {3},
	pages = {443--449},
	title = {A canonical operad pair},
	url = {https://doi.org/10.1017/S0305004100056292},
	volume = {86},
	year = {1979},
	bdsk-url-1 = {https://doi.org/10.1017/S0305004100056292}}

@misc{szczesny2025realizingtransfersystemssuboperads,
      title={Realizing Transfer Systems as Suboperads of Coinduced Operads}, 
      author={Ben Szczesny},
      year={2025},
      eprint={2507.00251},
      archivePrefix={arXiv},
      primaryClass={math.AT},
      note = {preprint \url{https://arxiv.org/abs/2507.00251}}, 
}

@article {BalchinBarnesRoitzheim,
    AUTHOR = {Balchin, Scott and Barnes, David and Roitzheim, Constanze},
     TITLE = {{$N_\infty$}-operads and associahedra},
   JOURNAL = {Pacific J. Math.},
  FJOURNAL = {Pacific Journal of Mathematics},
    VOLUME = {315},
      YEAR = {2021},
    NUMBER = {2},
     PAGES = {285--304},
      ISSN = {0030-8730,1945-5844},
   MRCLASS = {18M80 (06A07 52B20 55N91 55P91)},
  MRNUMBER = {4366744},
       DOI = {10.2140/pjm.2021.315.285},
       URL = {https://doi.org/10.2140/pjm.2021.315.285},
}

@article{Giraudo-operads-from-monoids,
  title={Combinatorial operads from monoids},
  author={Samuele Giraudo},
  journal={Journal of Algebraic Combinatorics},
  year={2013},
  volume={41},
  pages={493 - 538},
  url={https://api.semanticscholar.org/CorpusID:29329836}
}

@incollection {Lawson:operations,
    AUTHOR = {Lawson, Tyler},
     TITLE = {{$E_n$}-spectra and {D}yer-{L}ashof operations},
 BOOKTITLE = {Handbook of homotopy theory},
    SERIES = {CRC Press/Chapman Hall Handb. Math. Ser.},
     PAGES = {793--849},
 PUBLISHER = {CRC Press, Boca Raton, FL},
      YEAR = {[2020] \copyright 2020},
      ISBN = {978-0-815-36970-7},
   MRCLASS = {55P43 (55S12 55S20)},
  MRNUMBER = {4197999},
MRREVIEWER = {Geoffrey\ M. L. Powell},
}

@article {BlumbergHillOsornoOrmsbyRoitzheim,
    AUTHOR = {Blumberg, Andrew J. and Hill, Michael A. and Ormsby, Kyle and
              Osorno, Ang\'elica M. and Roitzheim, Constanze},
     TITLE = {Homotopical combinatorics},
   JOURNAL = {Notices Amer. Math. Soc.},
  FJOURNAL = {Notices of the American Mathematical Society},
    VOLUME = {71},
      YEAR = {2024},
    NUMBER = {2},
     PAGES = {260--266},
      ISSN = {0002-9920,1088-9477},
   MRCLASS = {18N70 (05E16 06A11 18B35 18M80 55P48 55P91)},
  MRNUMBER = {4693627},
MRREVIEWER = {Nicolas\ Boutry},
       DOI = {10.1090/noti2882},
       URL = {https://doi.org/10.1090/noti2882},
}

@book {MarklShniderStasheff,
    AUTHOR = {Markl, Martin and Shnider, Steve and Stasheff, Jim},
     TITLE = {Operads in algebra, topology and physics},
    SERIES = {Mathematical Surveys and Monographs},
    VOLUME = {96},
 PUBLISHER = {American Mathematical Society, Providence, RI},
      YEAR = {2002},
     PAGES = {x+349},
      ISBN = {0-8218-2134-2},
   MRCLASS = {18D50 (55P48 55R80)},
  MRNUMBER = {1898414},
MRREVIEWER = {Alexander\ A.\ Voronov},
       DOI = {10.1090/surv/096},
       URL = {https://doi.org/10.1090/surv/096},
}

@misc{macbrough,
      title={Equivariant linear isometries operads over Abelian groups}, 
      author={Ethan MacBrough},
      year={2023},
      eprint={2311.08797},
      archivePrefix={arXiv},
      primaryClass={math.AT},
      note = {\url{https://arxiv.org/abs/2311.08797}}, 
}

@article {HMOO,
    AUTHOR = {Hafeez, Usman and Marcus, Peter and Ormsby, Kyle and Osorno,
              Ang\'elica M.},
     TITLE = {Saturated and linear isometric transfer systems for cyclic
              groups of order {$p^mq^n$}},
   JOURNAL = {Topology Appl.},
  FJOURNAL = {Topology and its Applications},
    VOLUME = {317},
      YEAR = {2022},
     PAGES = {Paper No. 108162, 20},
      ISSN = {0166-8641,1879-3207},
   MRCLASS = {55P91 (55P43 55P48)},
  MRNUMBER = {4442623},
MRREVIEWER = {David\ Barnes},
       DOI = {10.1016/j.topol.2022.108162},
       URL = {https://doi.org/10.1016/j.topol.2022.108162},
}

@article {HillMengLi,
    AUTHOR = {Hill, Michael A. and Meng, Jiayun and Li, Nan},
     TITLE = {Counting compatible indexing systems for {$C_{p^n}$}},
   JOURNAL = {Orbita Math.},
  FJOURNAL = {Orbita Mathematicae},
    VOLUME = {1},
      YEAR = {2024},
    NUMBER = {1},
     PAGES = {37--58},
      ISSN = {2993-6144,2993-6152},
   MRCLASS = {06A11 (18M99 55P48 55P91 55S91)},
  MRNUMBER = {4895458},
       DOI = {10.2140/om.2024.1.37},
       URL = {https://doi-org.reed.idm.oclc.org/10.2140/om.2024.1.37},
}

@article {MORSVZ,
    AUTHOR = {Mazur, Kristen and Osorno, Ang\'elica M. and Roitzheim,
              Constanze and Santhanam, Rekha and Van Niel, Danika and Zapata
              Castro, Valentina},
     TITLE = {Uniquely compatible transfer systems for cyclic groups of
              order {$p^rq^s$}},
   JOURNAL = {Topology Appl.},
  FJOURNAL = {Topology and its Applications},
    VOLUME = {376},
      YEAR = {2025},
     PAGES = {Paper No. 109443},
      ISSN = {0166-8641,1879-3207},
   MRCLASS = {99-06},
  MRNUMBER = {4965744},
       DOI = {10.1016/j.topol.2025.109443},
       URL = {https://doi-org.reed.idm.oclc.org/10.1016/j.topol.2025.109443},
}

@article {Bannwart,
    AUTHOR = {Bannwart, Julie},
     TITLE = {Realization of saturated transfer systems on cyclic groups of
              order {$p^nq^m$} by linear isometries {$N_\infty$}-operads},
   JOURNAL = {J. Homotopy Relat. Struct.},
  FJOURNAL = {Journal of Homotopy and Related Structures},
    VOLUME = {20},
      YEAR = {2025},
    NUMBER = {3},
     PAGES = {455--475},
      ISSN = {2193-8407,1512-2891},
   MRCLASS = {18M75 (20J15)},
  MRNUMBER = {4944171},
       DOI = {10.1007/s40062-025-00377-6},
       URL = {https://doi-org.reed.idm.oclc.org/10.1007/s40062-025-00377-6},
}
\end{document}